\newtheorem{theorem}{Theorem}
\newtheorem{lemma}[theorem]{Lemma}
\newtheorem{corollary}[theorem]{Corollary}
\newtheorem{conjecture}[theorem]{Conjecture}
\newtheorem{proposition}[theorem]{Proposition}
\def\se{{\subseteq}}
\newcommand{\R}{\ensuremath{\mathbb{R}}}
\newcommand{\Z}{\ensuremath{\mathbb{Z}}}
\newcommand{\Q}{\ensuremath{\mathbb{Q}}}
\newcommand{\N}{\ensuremath{\mathbb{N}}}
\newcommand{\F}{\ensuremath{\mathbb{F}}}
\newcommand{\bfg}{\ensuremath{\mathbf{G}}}
\newcommand{\mco}{\ensuremath{\mathcal{O}}}
\newcommand{\mcp}{\ensuremath{\mathcal{P}}}
\providecommand{\abs}[1]{\left\lvert#1\right\rvert}
\providecommand{\hcu}[1]{H_{cc}^{#1}}
\providecommand{\ccu}[1]{C_{cc}^{#1}}
\DeclareMathOperator{\Ext}{Ext}
\DeclareMathOperator{\Tor}{Tor}
\DeclareMathOperator{\cd}{cd}
\DeclareMathOperator{\Comm}{Comm}
\DeclareMathOperator{\Hom}{Hom}
\DeclareMathOperator{\SLtwo}{{\bf SL_2}}
\DeclareMathOperator{\Btwo}{{\bf B_2}}
\DeclareMathOperator{\Coind}{Coind}
\DeclareMathOperator{\supp}{supp}
\newcommand\suchthat{ \mathrel{}\middle| \mathrel{}}
\title[Semidualities]{Semidualities from products of trees}
\thanks{The authors gratefully acknowledge the support of the National Science Foundation.}
\author{Daniel Studenmund \& Kevin Wortman}
\begin{document}

\begin{abstract} 
  Let $K$ be a global function field of characteristic $p$, and let
  $\Gamma$ be a finite-index subgroup of an arithmetic group defined
  with respect to $K$ and such that any torsion element of $\Gamma$ is
  a $p$-torsion element.  We define semiduality groups, and we show
  that $\Gamma$ is a $\mathbb{Z}[1/p]$-semiduality group if $\Gamma$
  acts as a lattice on a product of trees. We also give other examples
  of semiduality groups, including lamplighter groups, Diestel-Leader
  groups, and countable sums of finite groups.
\end{abstract}

\maketitle

\section{Introduction}

\subsection{Arithmetic groups} Let $K$ be a global field (number or function field), and let $S$ be a nonempty set of finitely many inequivalent valuations of $K$ including each archimedean valuation. The ring $\mathcal{O}_S \subseteq K$ will denote  the corresponding ring of $S$-integers. For any $v \in S$, we let $K_v$ be the completion of $K$ with respect to $v$ so that $K_v$ is a locally compact field.

We let $\bf G$ be a noncommutative, absolutely almost simple algebraic $K$-group, so that ${\bf G}(\mathcal{O}_S)$ is a lattice, included diagonally, in the product of simple Lie groups $\prod _{v \in S} {\bf G}(K_v)$. For each $v\in S$, we let $X_v$ be the symmetric space or Euclidean building (depending on whether $K_v$ is an archimedean or nonarchimedean field) associated with ${\bf G}(K_v)$, and we let $X_S=\prod_{v \in S} X_v$ so that ${\bf G}(\mathcal{O}_S)$ acts on $X_S$ as a lattice.

We let 
 $$k({\bf G},S)=\sum_{v\in S}\text{rank}_{K_v}\bf G$$

If $\bf G$ is $K$-anisotropic --- that is, if  ${\bf G}(\mathcal{O}_S)$ acts cocompactly on $X_S$ --- then there is a finite-index subgroup of ${\bf G}(\mathcal{O}_S)$ that is a duality group, and if $K_v$ is an archimedean field --- that is, if $X_v$ is a symmetric space --- for all $v \in S$, then there is a finite-index subgroup of ${\bf G}(\mathcal{O}_S)$ that is a Poincar\'e duality group.

Borel-Serre \cite{B-S1} \cite{B-S2} showed that ${\bf G}(\mathcal{O}_S)$ is also a virtual duality group when $\bf G$ is $K$-isotropic, as long as $K$ is a number field. In particular, Borel-Serre construct a bordification of $X_S$, which we denote as $\widehat{X_S}$, on which ${\bf G}(K)$ acts and ${\bf G}(\mathcal{O}_S)$ acts properly and cocompactly, and such that the compactly supported cohomology groups $H_c^*(\widehat{X_S};\mathbb{Z})$ are nontrivial in some single dimension, $\ell({\bf G},S)$. The result is that any finite-index torsion-free subgroup of ${\bf G}(\mathcal{O}_S)$ is a duality group of dimension $\ell({\bf G},S)$
with dualizing module $H_c^{\ell({\bf G},S)}(\widehat{X_S};\mathbb{Z})$.

The purpose of this paper is to suggest a possible analogue of Borel-Serre for arithmetic groups ${\bf G}(\mathcal{O}_S)$ when $K$ is a global function field.

\subsection{Function field case} Throughout the remainder of this paper, $K$ denotes a global function field of characteristic $p$, and we suppose that $\bf G$ is $K$-isotropic --- that is, that  ${\bf G}(\mathcal{O}_S)$ does not act cocompactly on $X_S$.

Any finite-index subgroup of ${\bf G}(\mathcal{O}_S)$ contains torsion, so it cannot be a duality group, as duality groups have finite cohomological dimension. However, there are finite-index subgroups of ${\bf G}(\mathcal{O}_S)$ whose only torsion elements are $p$-elements and whose cohomological dimension over $\mathbb{Z}[1/p]$ is bounded above by $k({\bf G},S)$. We let $\Gamma$ denote such a subgroup.

The group $\Gamma$ still has an obstruction to being a $\mathbb{Z}[1/p]$-duality group. Indeed, it is not of type $FP_{k({\bf G},S)}$ over $\mathbb{Z}[1/p]$ (see Kropholler \cite{K}, Bux-Wortman \cite{B-W1}, Gandini \cite{Ga}, and Bux-K\"{o}hl-Witzel \cite{B-K-W}). However, $\Gamma$ is of type $FP_{k({\bf G},S)-1}$, and we conjecture that the discrepancy between type $FP_{k({\bf G},S)}$ and $FP_{k({\bf G},S)-1}$ is the only, and in some ways a minor, obstruction to $\Gamma$ being a $\mathbb{Z}[1/p]$-duality group. Before making this precise, we'll need a definition.

For a commutative ring $R$, we say that a group $\Lambda$ is an $R$-semiduality group of dimension $d$ if
\begin{quote}

({\emph{i}}) ${\rm cd}_R(\Lambda) \leq d$,

\noindent ({\emph{ii}}) $ \Lambda$ is of type $FP_{d-1}$ over $R$,

\noindent ({\emph{iii}}) $H^k(\Lambda ; R\Lambda)=0$ if $k\neq d$, and

\noindent ({\emph{iv}}) $H^d(\Lambda ; R\Lambda)$ is a flat $R$-module.
\end{quote}

In the above definition, $H^d(\Lambda ; R\Lambda)$ is called the \emph{dualizing module}, and if the ring $R$ and the group $\Lambda$ are understood, then we'll often denote the dualizing module simply as $D$.

In Section \ref{sec:homalg} of this paper we'll show the following consequence of a group being a semiduality group.

\begin{proposition}\label{p:43988hfq0f}
If $\Lambda$ is an $R$-semiduality group of dimension $d$, then for any $0
\leq n \leq d$ and any left $R \Lambda$-module $M$, there are natural homomorphisms of $R$-modules 
$$\varphi_n^M :H_n(\Lambda; D \otimes _R M) \rightarrow
H^{d-n}(\Lambda; M)$$
The $\varphi_n^M$ are compatible with the connecting homomorphisms in
the long exact homology and cohomology sequences associated to a short
exact sequence of coefficient modules, and if $\cdots \rightarrow Q_1
\rightarrow Q_0 \rightarrow M \rightarrow 0$ is a projective
resolution of $M$ by left $R\Lambda$-modules, then $\varphi_n^M$ is
injective if $Q_{n+1}$ and $Q_n$ are finitely generated, and
surjective if $Q_n$ and $Q_{n-1}$ are finitely generated. By convention, $Q_{-1}$ is always finitely generated.\end{proposition}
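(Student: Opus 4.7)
The plan is to construct $\varphi_n^M$ via a standard Bieri--Eckmann style chain comparison, carefully tracking the failure of the projective resolution of $R$ to be finitely generated at its top degree.

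Begin by fixing a projective resolution
$$0\to P_d\to P_{d-1}\to\cdots\to P_0\to R\to 0$$
of the trivial left $R\Lambda$-module with $P_0,\ldots,P_{d-1}$ finitely generated. The hypothesis that $\Lambda$ is of type $FP_{d-1}$ over $R$ produces these terms, and $P_d=\ker(P_{d-1}\to P_{d-2})$ is automatically projective because $\Ext^1_{R\Lambda}(P_d,-)\cong\Ext^{d+1}_{R\Lambda}(R,-)=0$ by $\cd_R(\Lambda)\le d$; note however that $P_d$ need not be finitely generated. Let $P^\bullet=\Hom_{R\Lambda}(P_\bullet,R\Lambda)$, a cochain complex of right $R\Lambda$-modules whose terms $P^0,\ldots,P^{d-1}$ are finitely generated projective. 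By hypothesis (iii) its cohomology is concentrated in degree $d$, where it equals $D$, and by hypothesis (iv) $D$ is $R$-flat.

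The construction of $\varphi_n^M$ uses the natural evaluation map $\alpha_i^N:P^i\otimes_{R\Lambda}N\to\Hom_{R\Lambda}(P_i,N)$ given by $\phi\otimes n\mapsto(p\mapsto\phi(p)n)$, which is an isomorphism when $P_i$ is finitely presented as an $R\Lambda$-module (automatic for $i\le d-1$) or when $N$ is a finitely presented left $R\Lambda$-module. Given the projective resolution $Q_\bullet\to M$, form the double complexes $P^i\otimes_{R\Lambda}Q_j$ and $\Hom_{R\Lambda}(P_i,Q_j)$, linked by $\alpha$. The spectral sequence filtering $\Hom(P_\bullet,Q_\bullet)$ by $Q$ collapses (using $P_i$ projective and $Q_\bullet\to M$ acyclic) to show its total cohomology equals $H^*(\Lambda;M)$; the spectral sequence filtering $P^\bullet\otimes Q_\bullet$ by $P$ collapses (using $Q_j$ flat and $H^*(P^\bullet)=D$ concentrated in degree $d$) to show its total cohomology equals $\Tor^{R\Lambda}_{d-*}(D,M)\cong H_{d-*}(\Lambda;D\otimes_R M)$, where the latter identification uses the diagonal action on $D\otimes_R M$. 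The map $\varphi_n^M$ arises as the composite of these two identifications with the natural map $\alpha$ between the two total complexes.

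The main obstacle is that $\alpha_d^N:P^d\otimes_{R\Lambda}N\to\Hom_{R\Lambda}(P_d,N)$ need not be an isomorphism when neither $P_d$ nor $N$ is finitely presented, so only column $i=d$ of the double complexes can fail to match. Careful analysis of the interplay between the two spectral sequences near total degree $d-n$ shows that $\varphi_n^M$ is surjective when $\alpha_d^{Q_{n-1}}$ and $\alpha_d^{Q_n}$ are isomorphisms (i.e., $Q_{n-1}$ and $Q_n$ are finitely generated, hence finitely presented as projectives) and injective when $\alpha_d^{Q_n}$ and $\alpha_d^{Q_{n+1}}$ are isomorphisms; the convention that $Q_{-1}$ is finitely generated correctly gives surjectivity of $\varphi_0^M$ whenever $Q_0$ is finitely generated, consistent with the classical duality theorem. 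Compatibility of $\varphi_n^M$ with the long exact (co)homology sequences follows from the naturality of the evaluation map and of both spectral sequences, where $R$-flatness of $D$ is exactly what ensures that $D\otimes_R(-)$ preserves short exact sequences of coefficient modules.
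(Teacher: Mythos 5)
Your overall architecture is sound and parallels the paper's construction: fix a resolution $P_\bullet \to R$ with $P_0,\dots,P_{d-1}$ finitely generated, dualize, and compare $P^\bullet\otimes_{R\Lambda}Q_\bullet$ with $\Hom_{R\Lambda}(P_\bullet,Q_\bullet)$ via the evaluation map. The spectral-sequence identification of the two total cohomologies (one collapses to $H^*(\Lambda;M)$ using $\Hom(P_i,-)$ exact, the other collapses to $\Tor_{d-\bullet}(D,M)$ using $Q_j$ flat and $H^*(P^\bullet)$ concentrated in degree $d$) is correct and gives a clean alternative to the paper's route, which instead chooses a projective resolution $A_\bullet\to D$ and a quasi-isomorphism $h:A_\bullet\to P^*_{d-\bullet}$ and then factors $\varphi_n=\nu_n\circ\kappa_n$ through the intermediate complex $P^*_{d-\bullet}\otimes M$.

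However, the last paragraph is where the actual content of the proposition lives, and you have asserted rather than proved it. Saying "careful analysis of the interplay between the two spectral sequences near total degree $d-n$ shows..." is a statement of the desired conclusion, not an argument. To make this rigorous you would pass to the mapping cone of $\alpha$, observe that it is quasi-isomorphic to a complex concentrated in column $i=d$, and compute its total cohomology. Running the long exact sequence for $\mathrm{Cone}(\alpha_{d,\bullet})$ shows that for $j\geq 2$ one gets $\Tor^{R\Lambda}_{j-1}(P_d^*,M)$, and for $j=0,1$ one gets $\mathrm{coker}$ and $\ker$ of $\alpha_d^M$. The injectivity/surjectivity criteria in the proposition therefore reduce to the nontrivial vanishing statement $\Tor^{R\Lambda}_{n}(P_d^*,M)=0$ whenever $Q_{n+1}$ and $Q_n$ are finitely generated — this is precisely Lemma~\ref{l:higher_Qs_fg} in the paper, and it does not follow from the formal structure of the spectral sequences; it needs its own argument (the paper proves it by exhibiting $\Tor_n(P^*,M)$ as a subquotient of the homology of an exact complex). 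Without this lemma, or an equivalent, your claimed conclusions about $\varphi_n^M$ are unsupported.

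The compatibility with connecting homomorphisms is also treated too quickly. A short exact sequence $0\to M\to M'\to M''\to 0$ does not give a short exact sequence after applying $P_d^*\otimes_{R\Lambda}(-)$ (this functor is only right exact, and $P_d^*$ is typically not flat), so the "naturality of both spectral sequences" does not by itself produce compatible long exact sequences on the $P^\bullet\otimes Q_\bullet$ side. The paper addresses this in Proposition~\ref{p:natural7} by replacing $P_d^*\otimes M$ with the quotient $(P_d^*\otimes M)/J$, where $J$ is the image of $\Tor_1^{R\Lambda}(P_d^*,M'')$, and checking that the comparison with $\Hom(P_d,M)$ still factors through this quotient. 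Some version of that step is needed in your framework as well.
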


With the definition of semiduality and its immediate consequences listed above, we propose the following 

\begin{conjecture}\label{c:conj} Let $\mathcal{O}_S$ be a ring of $S$-integers in a global function field $K$ of characterstic $p$, and let $\bf G$ be a noncommutative, absolutely almost simple algebraic $K$-group.
If $\Gamma$ is a finite-index subgroup of ${\bf G}(\mathcal{O}_S)$ such that any torsion element of $\Gamma$ is a $p$-element, then $\Gamma$ is a $\mathbb{Z}[1/p]$-semiduality group of dimension $k({\bf G},S)$, and the dualizing module admits an action by ${\bf G}(K)$.
\end{conjecture}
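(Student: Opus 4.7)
The plan is to mimic the Borel--Serre program, now adapted to Bruhat--Tits buildings over function fields and relaxed to yield only semiduality. Conditions (\emph{i}) and (\emph{ii}) of the definition are already established in the cited works of Bux--Wortman and Bux--K\"ohl--Witzel, so the task reduces to verifying conditions (\emph{iii}) and (\emph{iv}), namely that $H^k(\Gamma; \mathbb{Z}[1/p]\Gamma)$ vanishes for $k \neq k(\mathbf{G}, S)$ and is flat in the top degree, and to producing a $\mathbf{G}(K)$-action on it. Since stabilizers of cells in the action of $\Gamma$ on the contractible building $X_S$ are finite $p$-groups, hence $\mathbb{Z}[1/p]$-acyclic, an equivariant spectral sequence identifies $H^*(\Gamma; \mathbb{Z}[1/p]\Gamma)$ with the compactly supported cohomology of a suitable $\Gamma$-cocompact thickening of $X_S$ with coefficients in $\mathbb{Z}[1/p]$. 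The whole argument therefore hinges on constructing such a thickening.

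I would first invoke Harder--Behr--Soul\'e reduction theory to decompose $\Gamma \backslash X_S$ into a compact core together with finitely many cuspidal neighborhoods, one for each $\Gamma$-conjugacy class of $K$-rational parabolic $\mathbf{P}$, each foliated by horospherical translates of the unipotent radical $\mathbf{U}_{\mathbf{P}}$. I would then construct a bordification $\widehat{X_S} \supseteq X_S$ by capping off each cusp with a boundary stratum built from the rational Tits building of $\mathbf{G}$, engineered so that $\mathbf{G}(K)$ acts on $\widehat{X_S}$ and $\Gamma$ acts cocompactly through codimension one with $p$-group cell stabilizers. A Mayer--Vietoris computation would then reduce the determination of $H_c^*(\widehat{X_S}; \mathbb{Z}[1/p])$ to an analysis of the contribution of each cuspidal stratum, where the expectation is that the cohomology of the $\mathbf{U}_{\mathbf{P}}(\mathcal{O}_S)$-orbit space of the link, inverted at $p$, is concentrated in the expected top degree $k(\mathbf{G}, S)$.

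Flatness of the dualizing module over $\mathbb{Z}[1/p]$ should follow because $\mathbb{Z}[1/p]$ is a PID and $H_c^{k(\mathbf{G},S)}(\widehat{X_S}; \mathbb{Z}[1/p])$ is torsion-free by construction: no integer coprime to $p$ can annihilate a nontrivial compactly supported cochain on a CW-complex. The $\mathbf{G}(K)$-action on the dualizing module falls out of the $\mathbf{G}(K)$-equivariance of $\widehat{X_S}$, and functoriality of compactly supported cohomology.

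The hard part will be assembling the bordification in the higher-rank setting. In the number field case, Borel--Serre's corner construction gives a bordification on which the full arithmetic group acts cocompactly with compact stabilizers; over a function field, cocompactness in top dimension \emph{must} fail, since the unipotent radicals $\mathbf{U}_{\mathbf{P}}(\mathcal{O}_S)$ are not finitely generated and the obstruction $FP_{k(\mathbf{G},S)-1} \not\Rightarrow FP_{k(\mathbf{G},S)}$ is built in. The bordification must therefore be designed to be cocompact through codimension one only, and the concentration of $H_c^*$ in top degree must be verified in spite of this non-cocompactness at top. The authors' product-of-trees argument establishes this scheme when each local rank is one; generalizing will require an inductive traversal of the poset of $K$-rational parabolics in which the finiteness defect of $\mathbf{U}_{\mathbf{P}}(\mathcal{O}_S)$ concentrates in precisely the codimension of the corresponding stratum, so that the cohomology groups of lower-dimensional strata all vanish after cuspidal contributions cancel, leaving a single surviving class in dimension $k(\mathbf{G}, S)$.
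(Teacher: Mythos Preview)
The statement you are addressing is a \emph{Conjecture} in the paper, not a theorem; the paper does not prove it in general and offers a proof only in the special case $\mathrm{rank}_{K_v}\mathbf{G}=1$ for all $v\in S$ (Theorem~\ref{t:mt}), where $X_S$ is a product of trees. Your proposal is therefore a program for an open problem rather than a proof to be checked against an existing one, and you yourself acknowledge this in the final paragraph.

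Comparing your program to what the paper actually does in the rank-one case: the paper does \emph{not} build a bordification $\widehat{X_S}$. Instead it works directly with $X_S$, introduces a cohomology theory $H^*_{cc}(X_S)$ of cochains that are compactly supported over each $\Gamma$-orbit, and identifies it with $H^*(\Gamma;\mathbb{Z}[1/p]\Gamma)$ via an equivariant spectral sequence (Proposition~\ref{prop:ourcohomology}). This is then computed from an exhaustion $X_S=\bigcup_n X_{S,n}$ by $\Gamma$-cocompact horoball complements through a $\varprojlim/\varprojlim^1$ short exact sequence. The authors explicitly contrast this with Borel--Serre: rather than augmenting the \emph{space} and taking $H_c^*$, they take an augmentation of the \emph{cohomology} of the original space. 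Your bordification route would need $\Gamma$ to act properly and cocompactly on $\widehat{X_S}$ to get the identification $H^*(\Gamma;\mathbb{Z}[1/p]\Gamma)\cong H_c^*(\widehat{X_S})$; ``cocompact through codimension one'' does not suffice, and you concede full cocompactness must fail. The $\mathbf{G}(K)$-action in the paper likewise comes not from a bordification but from the commensurator action on $H^*_{cc}(X_S)$ (Lemma~\ref{lem:commaction}).

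There is also a genuine error in your flatness argument. Torsion-freeness of the cochain module $C_c^*(-;\mathbb{Z}[1/p])$ does \emph{not} imply torsion-freeness of its cohomology: a subquotient of a torsion-free module over a PID can have torsion (e.g.\ $H_c^*$ of a mod-$q$ Moore space). In the paper, proving that the dualizing module is torsion-free is one of the most delicate steps: it occupies Section~\ref{sec:horosphere} and culminates in Lemma~\ref{horosphere_lim1tf}, which shows that $\varprojlim^1 H_c^{d-1}(Y_n)$ over a nested family of horospheres is torsion-free by an explicit divisibility argument in the cochain model. Any attack on the general conjecture will need a substantive replacement for this, not the one-line argument you give.
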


Note that Bux-K\"ohl-Witzel \cite{B-K-W} shows that any $\Gamma$ as in Conjecture~\ref{c:conj} is of type $FP_{k({\bf G},S)-1}$ over $\mathbb{Z}[1/p]$, and it is well-known that $\cd_{\mathbb{Z}[1/p]}\Gamma \leq k({\bf G},S)$ since the dimension of $X_S$ equals $k({\bf G},S)$ (see Lemma~\ref{lem:cdlemma} below). Therefore, proving Conjecture~\ref{c:conj} would amount to proving that $H^{m}(\Gamma ; \mathbb{Z}[1/p]\Gamma)=0$ if $m < k({\bf G},S)$, and that $D=H^{k({\bf G},S)}(\Gamma ; \mathbb{Z}[1/p]\Gamma)$ is flat as a $\mathbb{Z}[1/p]$-module, the latter condition being equivalent to $D$ being torsion-free since $\mathbb{Z}[1/p]$ is a principal ideal domain.

Furthermore, we conjecture that  $D$ contains, and is an inverse limit of quotients of, $H_c^{k({\bf G},S)}(X_S;\mathbb{Z}[1/p])$. That is, we can view $D$ as an augmentation of $H_c^{k({\bf G},S)}(X_S;\mathbb{Z}[1/p])$. Thus, whereas Borel-Serre exhibits duality groups whose dualizing modules are cohomology groups of augmentations of the spaces on which arithmetic groups act, we conjecture that over function fields, arithmetic groups are semiduality groups whose dualizing modules are augmentations of cohomology groups of spaces on which the arithmetic groups act.

As an illustration, let $L$ be a field whose characteristic is not
equal to $p$.  Recall that $\cd_L(\Gamma) \leq k({\bf G}, S)$. By
Bux-K\"ohl-Witzel \cite{B-K-W}, $L$ is of type $FP_{k({\bf G}, S)-1}$
as a $\Z[1/p]\Gamma$-module. Therefore if Conjecture \ref{c:conj} is
true, then $ H^{1}(\Gamma; L)$ is a quotient of
$H_{k({\bf G},S)-1}(\Gamma; D)$, and if
 $ 2 \leq n\leq k({\bf G},S)$ then
\[
  H_{k({\bf G},S)-n}(\Gamma; D) \cong H^{n}(\Gamma; L)
\]
Note that the only dimension of $H^{*}(\Gamma; L)$ which semiduality
would not be able to help determine is dimension 0, but we know
$H^0(\Gamma; L)=L$.

\subsection{Main result}
What we prove in this paper is a first case of Conjecture~\ref{c:conj}. Namely

\begin{theorem}\label{t:mt} Conjecture~\ref{c:conj} is true if ${\rm{rank}}_{K_v}{\bf G}=1$ for all $v \in S$.
In particular, if ${\bf P}$ is a proper $K$-parabolic subgroup of $\bf G$, then there is an exact sequence of $\mathbb{Z}[1/p]{\bf G}(K)$-modules 
$$0 \longrightarrow  H_c^{k({\bf G},S)}(X_S;\mathbb{Z}[1/p]) \longrightarrow D \longrightarrow \bigoplus _{z \in ({\bf G/P})(K)} M_z \longrightarrow 0$$ where $M_z$ is an uncountable $\mathbb{Z}[1/p]$-module for each $z \in ({\bf G/P})(K)$, $M_z\cong M_w$ as $\mathbb{Z}[1/p]$-modules for any $z,w \in ({\bf G/P})(K)$, and $g(M_z)=M_{gz}$ for all $g \in {\bf G}(K)$ and $z \in ({\bf G/P})(K)$.
\end{theorem}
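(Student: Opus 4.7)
Since every ${\bf G}(K_v)$ has $K_v$-rank $1$, each $X_v$ is a Bruhat--Tits tree $T_v$, so $X_S = \prod_{v \in S} T_v$ is a contractible CAT(0) cube complex of dimension $n := k({\bf G},S) = |S|$ on which $\Gamma$ acts properly with cell stabilizers that are finite $p$-groups. My plan is to compare the $\Gamma$-equivariant cellular cochain complex on $X_S$ with the compactly-supported one. Since $p$ is inverted and cell stabilizers are $p$-groups, the cellular chain complex $C_*(X_S;\mathbb{Z}[1/p])$ is a projective resolution of $\mathbb{Z}[1/p]$ over $\mathbb{Z}[1/p]\Gamma$. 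Choosing $\Gamma$-orbit representatives $\sigma$ of cells, the complex computing $H^*(\Gamma;\mathbb{Z}[1/p]\Gamma)$ becomes the \emph{product}
$$\Hom_{\mathbb{Z}[1/p]\Gamma}\bigl(C_*(X_S;\mathbb{Z}[1/p]),\,\mathbb{Z}[1/p]\Gamma\bigr) \;=\; \prod_\sigma \mathbb{Z}[1/p][\Gamma/\Gamma_\sigma],$$
while the compactly-supported cellular cochain complex computing $H_c^*(X_S;\mathbb{Z}[1/p])$ is the corresponding \emph{direct sum} $\bigoplus_\sigma \mathbb{Z}[1/p][\Gamma/\Gamma_\sigma]$. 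The tautological inclusion $\bigoplus \hookrightarrow \prod$ is a chain map, inducing the natural comparison
$$\iota\colon H_c^*(X_S;\mathbb{Z}[1/p]) \longrightarrow H^*(\Gamma;\mathbb{Z}[1/p]\Gamma),$$
whose top-degree instance will be the injection asserted in the theorem.

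Let $Q^*$ be the quotient cochain complex. The associated long exact sequence is
$$\cdots \to H^{m-1}(Q^*) \to H_c^m(X_S;\mathbb{Z}[1/p]) \to H^m(\Gamma;\mathbb{Z}[1/p]\Gamma) \to H^m(Q^*) \to H_c^{m+1}(X_S;\mathbb{Z}[1/p]) \to \cdots.$$
Since $H_c^0$ of an infinite tree vanishes, K\"unneth gives $H_c^m(X_S;\mathbb{Z}[1/p]) = 0$ for $m<n$. The theorem therefore reduces to (a) $H^m(Q^*)=0$ for $m<n$, and (b) an identification $H^n(Q^*) \cong \bigoplus_{z\in ({\bf G/P})(K)} M_z$ for uncountable $\mathbb{Z}[1/p]$-flat modules $M_z$; granting (a) and (b), the sequence collapses exactly to the short exact sequence in the theorem and simultaneously establishes the semiduality of $\Gamma$ in dimension $n$.

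Cells of $X_S$ lying in a $\Gamma$-cocompact core --- for instance the complement of a disjoint union of sufficiently deep open horoballs based at the $K$-rational ends $z \in ({\bf G/P})(K)$, whose existence follows from reduction theory --- have only finitely many $\Gamma$-orbits and hence contribute nothing to $Q^*$. All of $Q^*$ is contributed by cells in the cuspidal horoballs $U_z$. The rank-$1$ hypothesis is essential here: every $z\in ({\bf G/P})(K)$ determines a unique diagonal end $(z_v)_v \in \prod_v \partial T_v$ with stabilizer $\Gamma_z = \Gamma \cap {\bf P}_z(K)$ acting on the horoball $U_z$, whose horospherical boundary is a Diestel--Leader-type graph. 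Accordingly, $Q^*$ decomposes along $\Gamma$-orbits of $({\bf G/P})(K)$ into cuspidal subcomplexes $Q^*_{[\bar z]}$, and the main technical obstacle is to prove that the cohomology of each such piece is concentrated in degree $n$ and to identify $H^n(Q^*_{[\bar z]}) \cong \bigoplus_{z\in[\bar z]} M_z$ for uncountable torsion-free $M_z$. I anticipate executing this via a Busemann filtration of the cusp combined with an iterated inverse-limit/Mittag-Leffler argument, which is precisely the context in which the paper's lamplighter and Diestel--Leader examples arise. The ${\bf G}(K)$-equivariance of the resulting short exact sequence is then immediate from the naturality of each construction in the $K$-parabolic coset $z$ together with the transitivity of ${\bf G}(K)$ on $({\bf G/P})(K)$.
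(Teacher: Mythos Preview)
Your framework is sound and represents a legitimate reorganization of the paper's argument: rather than exhaust $X_S$ by the cocompact sets $X_{S,n}$ and invoke the $\varprojlim/\varprojlim{}^1$ short exact sequence as the paper does, you compare $C_c^*(X_S)$ with $C_{cc}^*(X_S)$ directly via the quotient $Q^*$. This is equivalent in principle, and your localization of $Q^*$ to the cusps using reduction theory is correct. But what you have written is an outline, not a proof, and the deferred steps are precisely where the paper expends its effort.

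There are two genuine gaps. First, the technical core --- your points (a) and (b) --- is not carried out. Showing $H^m(Q^*)=0$ for $m<n$ and that $H^n(Q^*)$ is torsion-free requires exactly the results the paper proves in Sections~\ref{sec:horoball_comp} and~\ref{sec:horosphere}: that $H_c^*(X_{S,n})$ is concentrated in top degree (Proposition~\ref{p:397}, a nontrivial induction on $|S|$), that $H_c^*(Y_n)$ is concentrated in degree $d-1$ (Proposition~\ref{prop:horosphere-cohom}), that $\varprojlim H_c^{d-1}(Y_n)=0$ (Lemma~\ref{horosphere_lim0}), and that $\varprojlim{}^1 H_c^{d-1}(Y_n)$ is torsion-free (Lemma~\ref{horosphere_lim1tf}). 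Your ``Busemann filtration combined with Mittag--Leffler'' is a correct slogan for this, but it is the entire substance of the theorem.

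Second, your asserted decomposition $H^n(Q^*_{[\bar z]})\cong \bigoplus_{z\in[\bar z]} M_z$ needs justification. At the chain level $Q^*$ decomposes only along $\Gamma$-orbits of cusps, and $Q^*_{[\bar z]}$ is the quotient of $C_{cc}^*(\bigsqcup_{z'\in[\bar z]} U_{z'};\Gamma)$ by the compactly supported subcomplex. This is \emph{not} $\bigoplus_{z'} C_{cc}^*(U_{z'};\Gamma_{z'})$: writing out orbit representatives one finds $\prod_\tau \bigoplus_{z'}$ rather than $\bigoplus_{z'}\prod_\tau$. The paper sidesteps this by applying Mayer--Vietoris to $X_S = X_{S,n}\cup\bigl(\bigsqcup_{\mathbf Q\in\mathcal P} B_{\mathbf Q,S,n}\bigr)$ in ordinary compactly supported cohomology, where the disjointness of the horoballs gives $\bigoplus_{\mathbf Q\in\mathcal P}$ over \emph{all} cusps immediately, before passing to any limit. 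If you persist with the $Q^*$ route you will have to argue separately that the per-cusp splitting survives at the level of cohomology.
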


For example, ${\bf SL_2}(\mathbb{F}_p[t])$ is a semiduality group of dimension 1, ${\bf SL_2}(\mathbb{F}_p[t,t^{-1}])$ is a semiduality group of dimension 2, and ${\bf SL_2}(\mathcal{O}_S)$ is a semiduality group of dimension $|S|$ whose dualizing module incorporates the action of ${\bf SL_2}(K)$ on $\mathbb{P}^1(K)$.

Our proof of Theorem~\ref{t:mt} is geometric. That is, we will use strongly that, under the hypotheses of Theorem~\ref{t:mt}, $X_S$ is a product of trees.

\subsection{Solvable groups} Let $\bf B_2$ be the group of upper
triangular matrices of determinant 1.  Thus,
${\bf B_2}(\mathbb{F}_p[t])$ is commensurable to $ \mathbb{F}_p[t]$,
and ${\bf B_2}(\mathbb{F}_p[t,t^{-1}])$ is commensurable to the
lamplighter group $\mathbb{F}_p \wr \mathbb{Z}$. This paper will also
show

\begin{theorem} ${\bf B_2}(\mathcal{O}_S)$ is virtually a
  $\Z[1/p]$-semiduality group of dimension $|S|$.
\end{theorem}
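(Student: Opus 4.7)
The plan is to reduce ${\bf B_2}(\mathcal{O}_S)$ to a tractable metabelian group via the function-field Dirichlet unit theorem, then verify the four semiduality axioms through the Lyndon-Hochschild-Serre spectral sequence, informed by the geometric action on a higher Diestel-Leader complex. First I would apply the unit theorem: $\mathcal{O}_S^\times \cong F \times \mathbb{Z}^{|S|-1}$ with $F$ finite, so a finite-index subgroup of ${\bf B_2}(\mathcal{O}_S)$ is isomorphic to $\Gamma_0 = \mathcal{O}_S \rtimes \mathbb{Z}^{|S|-1}$, where $\mathbb{Z}^{|S|-1}$ acts on the $\mathbb{F}_q$-vector space $(\mathcal{O}_S, +)$ through multiplication by units. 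Every torsion element of $\Gamma_0$ is a $p$-element lying in $\mathcal{O}_S$, so it suffices to show $\Gamma_0$ is a $\mathbb{Z}[1/p]$-semiduality group of dimension $|S|$.

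Next I would establish cohomological dimension and finiteness. Because $(\mathcal{O}_S,+)$ is a union of finite $p$-groups (each cohomologically trivial over $\mathbb{Z}[1/p]$), a direct-limit argument yields $\cd_{\mathbb{Z}[1/p]}\mathcal{O}_S \leq 1$; combined with $\cd_{\mathbb{Z}[1/p]}\mathbb{Z}^{|S|-1}=|S|-1$, the LHS spectral sequence for $1\to \mathcal{O}_S \to \Gamma_0 \to \mathbb{Z}^{|S|-1}\to 1$ forces $\cd_{\mathbb{Z}[1/p]}\Gamma_0 \leq |S|$. The property $FP_{|S|-1}$ over $\mathbb{Z}[1/p]$ follows from the cocompact action of $\Gamma_0$ on the $(|S|-1)$-dimensional horocyclic product $Y = \{x \in X_S : \sum_v \beta_v(x_v)=0\}$, a higher-rank Diestel-Leader cube complex whose cell stabilizers are finite $p$-subgroups of $\mathcal{O}_S$.

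The heart of the argument is the vanishing $H^k(\Gamma_0;\mathbb{Z}[1/p]\Gamma_0)=0$ for $k<|S|$ together with flatness of the dualizing module. In the LHS spectral sequence the row $q=0$ vanishes because $(\mathbb{Z}[1/p]\Gamma_0)^{\mathcal{O}_S}=0$: the infinite group $\mathcal{O}_S$ acts by translation on $\mathbb{Z}[1/p][\mathcal{O}_S]$ with no invariants of finite support. The $E_2$ page is therefore concentrated on the row $q=1$, and I would identify $H^1(\mathcal{O}_S;\mathbb{Z}[1/p]\Gamma_0)$ as a direct limit, indexed by an exhaustive filtration $\{A_n\}$ of $\mathcal{O}_S$ by finite $\mathbb{F}_q$-subspaces, of dualizing modules of the finite-index subgroups $A_n\rtimes \mathbb{Z}^{|S|-1}$. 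Poincar\'e duality for $\mathbb{Z}^{|S|-1}$ then collapses the spectral sequence to a single nonzero entry in total degree $|S|$, expressing the dualizing module as a direct limit of $\mathbb{Z}[1/p]$-free modules and hence $\mathbb{Z}[1/p]$-flat.

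The main obstacle is constructing the cofinal filtration $\{A_n\}$ of $\mathcal{O}_S$ by finite subgroups compatible with the $\mathbb{Z}^{|S|-1}$-action, and verifying that the transition maps in the direct limit preserve flatness. This compatibility relies on the valuation structure of $\mathcal{O}_S$ and on the fact that (after finite-index adjustments) multiplication by units permutes a natural exhaustive family of finite-dimensional $\mathbb{F}_q$-subspaces of $\mathcal{O}_S$; working this out carefully so the flatness of the truncated dualizing modules passes to the colimit is the delicate step.
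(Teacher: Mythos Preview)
Your spectral-sequence setup is correct through the point where the $E_2$ page collapses to the single row $q=1$: the vanishing of $H^0(\mathcal{O}_S;\Z[1/p]\Gamma_0)$ and the bound $\cd_{\Z[1/p]}\mathcal{O}_S\leq 1$ are both fine. The gap is in the next step. Poincar\'e duality for $\Z^{|S|-1}$ only converts $H^p(\Z^{|S|-1};M)$ into $H_{|S|-1-p}(\Z^{|S|-1};M)$; it does not by itself force these groups to vanish for $p<|S|-1$. To get concentration in a single bidegree you would need to know something strong about $M=H^1(\mathcal{O}_S;\Z[1/p]\Gamma_0)$ as a $\Z[1/p][\Z^{|S|-1}]$-module, and your proposed mechanism for this---a cofinal filtration $\{A_n\}$ of $(\mathcal{O}_S,+)$ by finite subgroups compatible with the $\Z^{|S|-1}$-action---does not exist. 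Already for $\mathcal{O}_S=\F_p[t,t^{-1}]$ the generator of $\Z^{|S|-1}=\Z$ acts by multiplication by $t$, and no nonzero finite additive subgroup is invariant under this action; ``permuting a family of subspaces'' does not give you subgroups of the form $A_n\rtimes\Z^{|S|-1}$ inside $\Gamma_0$. So the direct-limit description of the dualizing module, and with it the flatness argument, collapses.

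The paper takes an entirely geometric route that sidesteps this algebraic difficulty. Rather than analyzing the extension $1\to\mathcal{O}_S\to\Gamma_0\to\Z^{|S|-1}\to 1$, it works directly with the action of a finite-index $p$-torsion subgroup $\Gamma\leq\Btwo(\mathcal{O}_S)$ on a horoball $B_0$ in the product of trees $X_S$, cocompact on slabs $\beta^{-1}(I)$. Proposition~\ref{prop:ourcohomology} identifies $H^*(\Gamma;\Z[1/p]\Gamma)$ with the cohomology $\hcu{*}(B_0)$, and the exhaustion by slabs $X_n=\beta^{-1}[0,n]$ (each properly homotopy equivalent to a horosphere $Y_n$) reduces everything to the inverse system $\{H_c^{d-1}(Y_n)\}$ via the $\varprojlim/\varprojlim^1$ sequence~\eqref{eq:limlim1}. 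The real content is then Lemmas~\ref{horosphere_lim0} and~\ref{horosphere_lim1tf}, which show $\varprojlim H_c^{d-1}(Y_n)=0$ and $\varprojlim^1 H_c^{d-1}(Y_n)$ is torsion-free by explicit cochain manipulations on the product of trees. There is no algebraic substitute for these in your outline; if you want to rescue the LHS approach you would need an independent argument that $M$ is acyclic for $\Z^{|S|-1}$-homology in positive degrees, and that is essentially as hard as the original problem.
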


Thus, as the discrete group Solv is known to be a Poincar\'e duality group, and as the solvable Baumslag-Solitar
groups are known to be duality groups, the lamplighter groups with prime order
cyclic base are semiduality groups. Notice that the three groups from the previous sentence are commensurable respectively to ${\bf B_2}(\mathbb{Z}[\sqrt{2}])$,
${\bf B_2}(\mathbb{Z}[1/p])$, and ${\bf B_2}(\mathbb{F}_p[t,t^{-1}])$.

We also show that certain generalizations of $\Btwo(\F_p[t])$ and
$\Btwo(\F_p[t, t^{-1}])$ are semiduality groups, namely
countable sums of finite groups and Diestel-Leader groups,
respectively.

\subsection{Outline of proof}
In Section \ref{sec:homalg} we'll prove
Proposition~\ref{p:43988hfq0f}. In Section \ref{sec:trans_to_topology}
we'll show how the cohomology of a discrete group with group ring
coefficients can be, in some cases, interpreted from the topology of a
contractible space on which it acts properly, and perhaps
noncocompactly. In Section \ref{s:red} we'll detail how the groups
${\bf G}(\mathcal{O}_S)$ from Theorem~\ref{t:mt} act cocompactly on
the complement of a pairwise disjoint collection of horoballs in a
product of trees, and in Section \ref{sec:horoball_comp} we'll show
that such a complement has trivial compactly supported cohomology in
dimension $d-1$, where $d$ is the number of factors in the
product. Section \ref{sec:horosphere} shows that $\varprojlim ^1$ of
the compactly supported cohomology of a nested sequence of regular
horospheres in a product of trees is torsion-free in dimension $d-1$,
and the final section of this paper, Section \ref{sec:examples}, will combine the
ingredients collected in earlier sections to prove that certain groups
are semiduality groups, including a proof of Theorem~\ref{t:mt}.

\subsection{Acknowledgements}
We thank Gopal Prasad for suggesting to us the problem of looking for
an analogue to Borel-Serre for arithmetic groups defined with respect
to a global function field. We thank Mladen Bestvina for advice at
different stages of this project. We thank Ross Geoghegan, Peter
Kropholler, Tim Riley, and Thomas Weigel for helpful conversations.
We are also grateful to a referee of this paper who provided us with the statement and proof of Proposition~\ref{p:refchar}. We also thank Srikanth Iyengar for suggesting to us the proof of Lemma~\ref{l:sri} given below.

\section{Homological Algebra} \label{sec:homalg}

This section is divided in  two parts. First, we'll prove Proposition~\ref{p:43988hfq0f} from the introduction. Second, we'll prove an equivalent characterization of semiduality groups in the form of Proposition~\ref{p:refchar}. While Proposition~\ref{p:refchar} is not directly applied in this paper to other results, it seems to be of independent interest. Both the statement and proof of Proposition~\ref{p:refchar} were provided to us by an anonymous referee to whom we are grateful.

In this section we let $R$ be a commutative ring.

\subsection{Proof of Proposition~\ref{p:43988hfq0f}}\label{s:2.1}
We'll prove Proposition~\ref{p:43988hfq0f} in four steps. First, we'll define $\varphi_n^M$. Second,  we'll show how the injectivity and surjectivity of $\varphi_n^M$ can be deduced from the finiteness properties of $M$. Third, we'll demonstrate the required naturality properties of $\varphi_n^M$. Last, and not until the final sentence of Section~\ref{s:2.1}, we'll invoke the assumption from Proposition~\ref{p:43988hfq0f} that $H^d(\Gamma ; R\Gamma)$ is a flat $R$-module.

To define $\varphi_n^M$, let  $\Gamma$ be a group of type $FP_{d-1}$ over $R$ with  $\cd_R\Gamma \leq d$.
Then there is a projective resolution of the trivial left $R\Gamma$-module $R$ by left $R\Gamma$-modules
$$0 \rightarrow P_d \rightarrow P_{d-1} 
\rightarrow \cdots \rightarrow P_1 \rightarrow P_0 \rightarrow R \rightarrow 0$$  where $P_i$ is finitely generated if $i\leq d-1$. 

We let $P^*_i=\Hom_{R\Gamma}(P_i,R\Gamma)$ and $D=H^d(\Gamma ;
R\Gamma) $. Assuming that $H^k(\Gamma ;
R\Gamma) = 0$ if $k \neq d$, we have the below exact sequence of right $R\Gamma$-modules

$$0 \rightarrow P_0^* \rightarrow P_{1}^* 
\rightarrow \cdots \rightarrow P_{d-1}^* \rightarrow P_d^* \rightarrow D \rightarrow 0$$

Let $\cdots \rightarrow A_1 \rightarrow A_0 \rightarrow D \rightarrow 0$ be a projective resolution of $D$ by right $R\Gamma$-modules. Let $h:A_\bullet \rightarrow P_{d-\bullet}^*$ be a chain map over the identity map on $D$. Thus, for any left $R\Gamma$-module $M$, there are chain maps $$A_\bullet \otimes _{R\Gamma} M \rightarrow  P_{d-\bullet}^* \otimes _{R\Gamma} M \rightarrow \Hom_{R\Gamma}(P_{d-\bullet},M)$$
that induce natural homomorphisms

\xymatrix{  & \Tor_n^{R\Gamma}(D,M) \ar[r]^-{\kappa^M_n}  & H_n( P_{d-\bullet}^* \otimes _{R\Gamma} M ) \ar[r]^-{\nu^M_n}  & H^{d-n}(\Gamma;M)   }

We define $\varphi^M_n=\nu^M_n \circ \kappa^M_n$.

To deduce when $\varphi^M_n$ is injective or surjective, we'll deduce when those properties are satisfied by $ \kappa^M_n$ and $\nu^M_n$ separately, beginning  with $ \kappa^M_n$. But before that, we'll need the following

\begin{lemma} \label{l:higher_Qs_fg}
Let $P$ be a projective, left $R\Gamma$-module, and $P^*$ its dual. 
If $\cdots \rightarrow Q_1 \rightarrow Q_0 \rightarrow M \rightarrow 0$ is a projective resolution of a left $R\Gamma$-module $M$, and if $Q_{n+1}$ and $Q_{n}$ are finitely generated, then $\Tor_{n}^{R\Gamma} (P^* ,M)=0$.
\end{lemma}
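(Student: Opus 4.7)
My plan is to compare the chain complex $P^* \otimes_{R\Gamma} Q_\bullet$, whose $n$-th homology computes $\Tor_n^{R\Gamma}(P^*, M)$, with the complex $\Hom_{R\Gamma}(P, Q_\bullet)$, via the natural evaluation map
\[
\alpha_Q \colon P^* \otimes_{R\Gamma} Q \longrightarrow \Hom_{R\Gamma}(P, Q), \qquad f \otimes q \longmapsto \bigl(p \mapsto f(p) q\bigr).
\]
First I would verify that $\alpha_{R\Gamma}$ is an isomorphism (under the canonical identification $P^* \otimes_{R\Gamma} R\Gamma = P^*$, it is the identity), and then conclude by additivity and passage to direct summands that $\alpha_Q$ is an isomorphism whenever $Q$ is a finitely generated projective left $R\Gamma$-module. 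Applied degreewise to the resolution $Q_\bullet$, this produces a chain map $\alpha_\bullet \colon P^* \otimes_{R\Gamma} Q_\bullet \to \Hom_{R\Gamma}(P, Q_\bullet)$ which, by the hypothesis on $Q_n$ and $Q_{n+1}$, is an isomorphism in degrees $n$ and $n+1$.

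Because $P$ is projective, the functor $\Hom_{R\Gamma}(P, -)$ is exact; applied to the exact augmented sequence $\cdots \to Q_1 \to Q_0 \to M \to 0$, it yields an exact sequence, so $\Hom_{R\Gamma}(P, Q_\bullet)$ is acyclic in all positive degrees. In particular, its $n$-th homology vanishes, which is the case $n \geq 1$ covered by the lemma.

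The conclusion then follows from the elementary observation that a chain map which is an isomorphism in two consecutive degrees $n$ and $n+1$, and whose target has vanishing $n$-th homology, must itself have vanishing $n$-th homology on the source. This is a brief diagram chase: the iso in degree $n+1$ identifies $\mathrm{im}\, d_{n+1}$ on source and target; the iso in degree $n$ sends $\ker d_n$ on the source into $\ker d_n$ on the target, which by acyclicity equals $\mathrm{im}\, d_{n+1}$ on the target; injectivity of the iso in degree $n$ then forces $\ker d_n = \mathrm{im}\, d_{n+1}$ on the source. The only real technical care is in setting up $\alpha_Q$ respecting the left and right $R\Gamma$-module structures; once that is done, everything is formal.
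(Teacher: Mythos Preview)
Your proposal is correct and follows essentially the same approach as the paper: both compare $P^*\otimes_{R\Gamma}Q_\bullet$ with $\Hom_{R\Gamma}(P,Q_\bullet)$ via the natural evaluation map, observe that this map is an isomorphism in degrees $n$ and $n+1$ because $Q_n$ and $Q_{n+1}$ are finitely generated projective, and then use exactness of $\Hom_{R\Gamma}(P,-)$ together with a short diagram chase to conclude that $\Tor_n^{R\Gamma}(P^*,M)$ injects into $0$. The paper phrases the chase as ``homology of the top row injects into the homology of the bottom row,'' which is exactly your observation that isomorphisms in degrees $n$ and $n+1$ force vanishing of $H_n$ on the source when $H_n$ vanishes on the target.
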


\begin{proof}
The following diagram commutes 
\[
 \begin{CD} 
   {P}^* \otimes _{R\Gamma} Q_{n+1} @>>>  {P}^* \otimes _{R\Gamma} Q_{n}    @>>>  {P}^* \otimes _{R\Gamma} Q_{n-1} 
    \\ @VVV @VVV @VVV \\
   \Hom_{R\Gamma}(P,Q_{n+1}) @>>>  \Hom_{R\Gamma}(P,Q_n) @>>>  \Hom_{R\Gamma}(P,Q_{n-1})
 \end{CD}
  \]
Since  $Q_{n+1}$ and $Q_{n}$ are finitely generated projective, they
are finitely presented, so 
the two vertical maps on the left are isomorphisms. It follows that the
homology of the top row injects into
the homology of the bottom row. That is, 
$\Tor_{n}^{R\Gamma} (P^* ,M)$ injects into $0$, since $P$ is projective, and thus  
$\Hom(P,-)$ is exact.
\end{proof}

Now we are prepared to address the injectivity and surjectivity of the homomorphisms $\kappa_n^M:\Tor_{n}^{R\Gamma}(D,M) \rightarrow   H_n( P_{d-\bullet}^* \otimes _{R\Gamma} M )   $.

\begin{lemma}\label{l:sri}
Let $\cdots \rightarrow Q_1 \rightarrow Q_0 \rightarrow M \rightarrow 0$ be a projective resolution of a left $R\Gamma$-module $M$. Then $\kappa_0^M$ is an isomorphism, $\kappa_1^M$ is surjective,  $\kappa_n^M$ is  injective if $Q_{n+1}$ and $Q_{n}$ are
  finitely generated, and $\kappa_n^M$ is surjective if $Q_{n}$ and $Q_{n-1}$ are
  finitely generated.
\end{lemma}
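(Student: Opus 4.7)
The plan is to realize $\kappa_n^M$ as the map induced on homology by the chain map $h \otimes 1_M \colon A_\bullet \otimes_{R\Gamma} M \to P^*_{d-\bullet} \otimes_{R\Gamma} M$, and to analyze its kernel and cokernel via the long exact sequence of the mapping cone $C_\bullet := \mathrm{cone}(h)$. Because $A_\bullet$ and $P^*_{d-\bullet}$ are both resolutions of $D$ and $h$ induces the identity on $D$, the cone $C_\bullet$ is acyclic. In degree $n \geq 1$, one has $C_n = A_{n-1} \oplus P^*_{d-n}$, which is a projective right $R\Gamma$-module, since $A_{n-1}$ is projective and $P^*_{d-n}$ is projective whenever $d-n \leq d-1$. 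The only possibly non-projective term is $C_0 = P^*_d$.

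The next step is to compute the homology of $C_\bullet \otimes_{R\Gamma} M$. Since $C_1 \to C_0$ is surjective (from acyclicity), the term $H_0(C_\bullet \otimes M)$ is immediately zero. A slightly more delicate argument gives $H_1(C_\bullet \otimes M) = 0$ with no hypothesis on $M$: setting $Z_1 = \ker(C_1 \to C_0) = \mathrm{im}(C_2 \to C_1)$, right exactness of $\otimes M$ applied to $0 \to Z_1 \to C_1 \to C_0 \to 0$ identifies the kernel of $C_1 \otimes M \to C_0 \otimes M$ with the image of $Z_1 \otimes M$ in $C_1 \otimes M$, and the surjection $C_2 \twoheadrightarrow Z_1$ induces a surjection $C_2 \otimes M \twoheadrightarrow Z_1 \otimes M$, showing that the image from $C_2 \otimes M$ already reaches the entire kernel. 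Finally, for $m \geq 2$, the truncated complex $\cdots \to C_3 \to C_2 \to C_1 \to P^*_d \to 0$ is a genuine projective resolution of $P^*_d$, so an index shift gives $H_m(C_\bullet \otimes M) \cong \Tor_{m-1}^{R\Gamma}(P^*_d, M)$.

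With these computations in hand, I feed the mapping cone's long exact sequence,
\[
\cdots \to H_{n+1}(C_\bullet \otimes M) \to \Tor_n^{R\Gamma}(D, M) \xrightarrow{\kappa_n^M} H_n(P^*_{d-\bullet} \otimes M) \to H_n(C_\bullet \otimes M) \to \cdots,
\]
where $H_n(A_\bullet \otimes M)$ has been identified with $\Tor_n^{R\Gamma}(D, M)$. The vanishing of $H_0$ and $H_1$ of the cone gives that $\kappa_0^M$ is an isomorphism and $\kappa_1^M$ is surjective with no finiteness hypothesis. For $n \geq 2$, surjectivity of $\kappa_n^M$ reduces to $\Tor_{n-1}^{R\Gamma}(P^*_d, M) = 0$, which Lemma~\ref{l:higher_Qs_fg} supplies when $Q_n$ and $Q_{n-1}$ are finitely generated; for $n \geq 1$, injectivity of $\kappa_n^M$ reduces to $\Tor_n^{R\Gamma}(P^*_d, M) = 0$, supplied by Lemma~\ref{l:higher_Qs_fg} when $Q_{n+1}$ and $Q_n$ are finitely generated.

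The main technical point in this argument is the unconditional vanishing $H_1(C_\bullet \otimes M) = 0$, since it is the one place where the identification $H_m(C_\bullet \otimes M) = \Tor_{m-1}(P^*_d, M)$ breaks down (the boundary involving the potentially non-projective term $C_0 = P^*_d$ enters in an essential way, and the truncated-resolution interpretation no longer matches $C_\bullet$ exactly). Once this vanishing is established, every remaining assertion follows mechanically from the cone long exact sequence together with Lemma~\ref{l:higher_Qs_fg}.
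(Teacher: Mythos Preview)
Your argument is correct. The paper takes a closely related but organizationally different route: instead of coning $h$, it uses the short exact sequence of complexes $0 \to P^*_d \to P^*_{d-\bullet} \to P^*_{d-\geq 1} \to 0$, tensors everything with the projective resolution $Q_\bullet$ of $M$ (so works with bicomplexes), and uses that $(h\otimes\mathrm{id}):A_\bullet\otimes Q_\bullet \to P^*_{d-\bullet}\otimes Q_\bullet$ is a quasi-isomorphism to identify $\Tor_n(D,M)$ with $H_n(P^*_{d-\bullet}\otimes Q_\bullet)$; the long exact sequence associated to the truncation then plays the same role as your cone sequence, with the same appeal to Lemma~\ref{l:higher_Qs_fg} to kill $\Tor_*(P^*_d,M)$. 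Your approach is somewhat more elementary in that it avoids double complexes entirely and works directly with $-\otimes_{R\Gamma} M$; the price is the ad hoc verification that $H_1(C_\bullet\otimes M)=0$, which you carry out correctly by hand. The paper's approach, by contrast, handles the low-degree cases ($n=0,1$) by inspection of what $\iota_*$ and $\partial$ look like in those degrees, which is quick once the bicomplex machinery is in place. Both arguments ultimately reduce the question to the vanishing of $\Tor_n(P^*_d,M)$ and $\Tor_{n-1}(P^*_d,M)$, so the content is the same.
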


\begin{proof}
Recall that $A_\bullet \rightarrow D$ is a projective resolution of $D$ with a chain map $h: A_\bullet \rightarrow P_{d-\bullet}^*$ over the identity on $D$, so that $h$ is a quasi-isomorphism. We let  $P^*_{d-\geq 1}$ be the truncated complex of $P^*_{d-\bullet}$, so that the bottom row of the below diagram of chain maps is exact.

\xymatrix{   &   & A_\bullet   \ar[d]^{h} &   &   \\
0 \ar[r]  & P^*_d  \ar[r] &  P^*_{d-\bullet}   \ar[r]  &  P^*_{d-\geq 1}  \ar[r] &  0  }

Because each $Q_n$ is projective, the below diagram of bicomplexes has an exact row and  ${(h\otimes id)_*}$ is a quasi-isomorphism.

\xymatrix{  &  & A_\bullet \otimes _{R\Gamma} Q_{\bullet}  \ar[d]^-{(h\otimes id)_*} &   &   \\
0 \ar[r]  & P^*_d \otimes _{R\Gamma} Q_{\bullet} \ar[r] &  P^*_{d-\bullet} \otimes _{R\Gamma} Q_{\bullet}  \ar[r]  &  P^*_{d-\geq 1} \otimes _{R\Gamma} Q_{\bullet} \ar[r] &  0  }

Passing to homology, we have the following diagram whose row is exact and whose vertical map, ${(h\otimes id)_*}$, is an isomorphism.

\xymatrix{     & \Tor_n^{R\Gamma}(D,M)  \ar[d]^-{(h\otimes id)_*} &  &   \\
\Tor_n^{R\Gamma}(P^*_d,M) \ar[r]   & H_n( P^*_{d-\bullet} \otimes _{R\Gamma} Q_{\bullet})  \ar[r] ^-{\iota _*} & H_n( P^*_{d-\geq 1} \otimes _{R\Gamma} Q_{\bullet})  \ar[r]^-\partial & 
\Tor_{n-1}^{R\Gamma}(P^*_d,M)   }

Notice that the domain of $\partial$ is $H_n( P^*_{d-\geq 1} \otimes _{R\Gamma} M)$ since $P^*_{d-\geq 1}$ is a deleted projective resolution.

For $n\geq 2$, we have $H_n( P^*_{d-\geq 1} \otimes _{R\Gamma} M)=H_n( P^*_{d-\bullet} \otimes _{R\Gamma} M)$ and $\kappa^M_n=\iota _* \circ (h\otimes id)_*$. Thus, when $n \geq 2$, the lemma follows from Lemma~\ref{l:higher_Qs_fg}.

For $n=1$, we have that $\Tor_{n-1}^{R\Gamma}(P^*_d,M) =P^*_d\otimes_{R\Gamma}M$ and thus the image of $\iota_*$, which is the kernel of $\partial$, is $H_1( P^*_{d-\bullet} \otimes _{R\Gamma} M)$.

For $n=0$, notice that $\kappa_0^M$ is the identity map on $D\otimes_{R\Gamma}M$.
\end{proof}

For $\nu_n^M:H_n( P_{d-\bullet}^* \otimes _{R\Gamma} M ) \rightarrow  H^{d-n}(\Gamma;M)  $ we have the following

\begin{lemma}\label{l:adendum}
Let $M$ be a left $R\Gamma$-module $M$. Then $\nu_1^M$ is injective, and $\nu_n^M$ is bijective if $n\geq 2$. If $M$ is finitely presented, then $\nu_0^M$ and $\nu_1^M$ are bijective. If $M$ is finitely generated, then $\nu_0^M$ is surjective.
\end{lemma}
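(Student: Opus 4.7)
The plan is to analyze the natural chain map
\[ f_\bullet: P_{d-\bullet}^* \otimes_{R\Gamma} M \longrightarrow \Hom_{R\Gamma}(P_{d-\bullet}, M) \]
that induces $\nu_n^M$. Its degree-$i$ component is the evaluation map $\eta_i: P_{d-i}^* \otimes_{R\Gamma} M \to \Hom_{R\Gamma}(P_{d-i}, M)$ sending $\varphi \otimes m$ to $p \mapsto \varphi(p)m$. It is a standard fact that $\eta_{P,M}$ is an isomorphism whenever $P$ is finitely generated projective. Since the resolution is chosen so that $P_j$ is finitely generated for $j \leq d-1$, this gives that $f_i$ is an isomorphism for every $i \geq 1$; only $f_0$, corresponding to the possibly non-finitely-generated $P_d$, can fail to be one.

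From this, the unconditional statements of the lemma follow by routine chain-level arguments that use only that $f_i$ is an isomorphism for $i \geq 1$. For $n \geq 2$, the cycles and boundaries in degree $n$ of either complex involve only terms in degrees $\geq n-1 \geq 1$, so $f_\bullet$ induces bijections on each, and hence $\nu_n^M$ is bijective. For the injectivity of $\nu_1^M$, if $x$ is a $1$-cycle of the source with $f_1(x) = \partial y$ for some $y$ in degree $2$, I lift $y$ through the isomorphism $f_2$ to obtain $z$ with $f_1(\partial z) = f_1(x)$, and the injectivity of $f_1$ then gives $x = \partial z$, so $[x] = 0$.

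For the refinements I would establish that $f_0 = \eta_{P_d, M}$ is surjective when $M$ is finitely generated and bijective when $M$ is finitely presented. Given a surjection $R\Gamma^n \to M \to 0$ or a finite presentation $R\Gamma^m \to R\Gamma^n \to M \to 0$, apply the right-exact functor $P_d^* \otimes_{R\Gamma} (-)$ and the functor $\Hom_{R\Gamma}(P_d, -)$, which is exact because $P_d$ is projective. The natural transformation $\eta$ yields a commuting square in which the map on $(P_d^*)^n$ is the identity. When $M$ is only finitely generated this forces $\eta_{P_d, M}$ to be surjective; when $M$ is finitely presented, both $P_d^* \otimes_{R\Gamma} M$ and $\Hom_{R\Gamma}(P_d, M)$ appear as cokernels of the same map $(P_d^*)^m \to (P_d^*)^n$, so $\eta_{P_d, M}$ is an isomorphism. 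Surjectivity of $f_0$ immediately gives surjectivity of $\nu_0^M$, while bijectivity of $f_0$ promotes $f_\bullet$ to a quasi-isomorphism (by the same lifting trick, now applied in degrees $0$ and $1$), yielding bijectivity of both $\nu_0^M$ and $\nu_1^M$. There is no deep obstacle here; the only subtlety is the asymmetric role of $P_d$ caused by $\Gamma$ being assumed only of type $FP_{d-1}$ rather than $FP_d$.
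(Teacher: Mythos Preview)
Your proposal is correct and follows essentially the same approach as the paper: both analyze the natural evaluation map $P_{d-\bullet}^*\otimes_{R\Gamma}M\to\Hom_{R\Gamma}(P_{d-\bullet},M)$, use that it is an isomorphism in degrees $i\geq 1$ because $P_{d-i}$ is finitely generated projective there, and handle the degree-$0$ component via the finiteness hypotheses on $M$. The only real difference is cosmetic---the paper packages the argument in commutative diagrams and invokes the 5-Lemma, while you spell out the chain-level lifting directly and supply a proof (which the paper simply asserts) that $\eta_{P_d,M}$ is surjective for $M$ finitely generated and bijective for $M$ finitely presented.
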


 \begin{proof}
 If $n \geq 2$, then we see that the vertical maps in the below commutative diagram are isomorphisms since $P_{d-i}$ is finitely generated and projective if $i \geq 1$.
 \[
 \begin{CD} 
{P}^*_{d-(n+1)} \otimes _{R\Gamma} M @>>>   {P}^*_{d-n} \otimes _{R\Gamma} M @>>>  {P}^*_{d-(n-1)} \otimes _{R\Gamma} M  \\ 
@VVV@VVV @VVV \\
  \Hom_{R\Gamma}(P_{d-(n+1)},M) @>>>   \Hom_{R\Gamma}(P_{d-n},M) @>>> \Hom_{R\Gamma}(P_{d-(n-1)},M) 
 \end{CD}
  \]
 Therefore, $\nu_n^M$ is an isomorphism if $n \geq 2$.

 For $n=1$, observe the below commutative diagram 
 \[
 \begin{CD} 
{P}^*_{d-2} \otimes _{R\Gamma} M @>>>   {P}^*_{d-1} \otimes _{R\Gamma} M @>>>  {P}^*_{d} \otimes _{R\Gamma} M  \\ 
@VVV@VVV @VVV \\
  \Hom_{R\Gamma}(P_{d-2},M) @>>>   \Hom_{R\Gamma}(P_{d-1},M) @>>> \Hom_{R\Gamma}(P_{d},M) 
 \end{CD}
  \]
Since $P_{d-2}$ and $P_{d-1}$ are finitely generated and projective, the two vertical maps on the left are isomorphisms so $\nu_1^M$ is injective. If, in addition,  $M$ is finitely presented, then the vertical map on the right is bijective, so  $\nu_1^M$ is bijective. 

 As for $\nu_0^M$,
the sequence 
 $$\Hom_{R\Gamma}(P_{d-1},M) \rightarrow \Hom_{R\Gamma}(P_{d},M) \rightarrow H^d(\Gamma;M) \rightarrow 0$$
 is exact by the definition of $H^{d}(\Gamma;M)$. Using the above for $M$ and $M=R\Gamma$, and using that tensor product is right exact, we see that the rows of the commutative diagram
 \[
 \begin{CD} 
 {P}^*_{d-1} \otimes _{R\Gamma} M @>>>  {P}^*_{d} \otimes _{R\Gamma} M @>>>
D \otimes _{R\Gamma} M @>>> 0 \\ 
@VVV@VVV @VVV  \\
   \Hom_{R\Gamma}(P_{d-1},M) @>>> \Hom_{R\Gamma}(P_{d},M) @>>> H^d(\Gamma;M) @>>> 0
 \end{CD}
  \]
  are exact.
  
The vertical map on the left is an isomorphism since $P_{d-1}$ is finitely generated and projective. Thus, $\nu_0^M$ is surjective (resp.\
bijective) if the second vertical map from the left is by the
5-Lemma. Now note that the second vertical map on the left is
surjective if $M$ is finitely generated, and bijective if $M$ is
finitely presented. 
\end{proof}

We now have

\begin{proposition}\label{p:injasurj}
  Suppose $\Gamma$ is a group of type $FP_{d-1}$ over $R$, that $\cd_R(\Gamma) \leq d$, and that $H^k(\Gamma ; R\Gamma)$ if $k \neq d$. Let $D= H^d(\Gamma ; R\Gamma)$. 
  
 If $\cdots \rightarrow Q_1 \rightarrow Q_0 \rightarrow M \rightarrow 0$ is a projective resolution of a left $R\Gamma$-module $M$,  then $$\varphi^M_n : \Tor^{R\Gamma}_n(D,M) \rightarrow
  H^{d-n}(\Gamma; M)$$ is
 injective if $Q_{n+1}$ and $Q_{n}$ are
  finitely generated, and surjective if $Q_{n}$ and $Q_{n-1}$ are
  finitely generated. \end{proposition}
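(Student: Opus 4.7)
The plan is to prove Proposition~\ref{p:injasurj} by stitching together Lemma~\ref{l:sri} and Lemma~\ref{l:adendum} through the factorization $\varphi_n^M = \nu_n^M \circ \kappa_n^M$, handling the ranges $n \geq 2$, $n = 1$, and $n = 0$ separately to match how the conclusions of those two lemmas are stratified.

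For $n \geq 2$ the situation is cleanest: Lemma~\ref{l:adendum} gives $\nu_n^M$ bijective, so $\varphi_n^M$ inherits all injectivity and surjectivity properties from $\kappa_n^M$, and Lemma~\ref{l:sri} produces these under exactly the finite-generation hypotheses asserted by the proposition---$Q_{n+1}$ and $Q_n$ finitely generated for injectivity, and $Q_n$ and $Q_{n-1}$ finitely generated for surjectivity.

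For $n = 1$ I would argue the two halves separately. Injectivity: $\nu_1^M$ is unconditionally injective by Lemma~\ref{l:adendum}, and under the hypothesis that $Q_2$ and $Q_1$ are finitely generated, $\kappa_1^M$ is also injective by Lemma~\ref{l:sri}, so the composite is injective. Surjectivity: $\kappa_1^M$ is unconditionally surjective by Lemma~\ref{l:sri}, while the hypothesis that $Q_1$ and $Q_0$ are finitely generated exhibits $M$ as finitely presented, which by Lemma~\ref{l:adendum} promotes $\nu_1^M$ to a bijection. For $n = 0$, Lemma~\ref{l:sri} gives that $\kappa_0^M$ is always an isomorphism, so $\varphi_0^M$ inherits the properties of $\nu_0^M$. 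Using the convention that $Q_{-1}$ is finitely generated, the surjectivity hypothesis becomes simply ``$M$ finitely generated'' while the injectivity hypothesis becomes ``$M$ finitely presented,'' and Lemma~\ref{l:adendum} provides surjectivity and bijectivity of $\nu_0^M$ in those two cases respectively.

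Because the genuine homological-algebra content has already been extracted by Lemma~\ref{l:sri} and Lemma~\ref{l:adendum}, no substantive obstacle remains in proving Proposition~\ref{p:injasurj}: the argument is purely bookkeeping. In particular, the flatness of $D$ over $R$ assumed in the definition of a semiduality group is not used here; that hypothesis will only be needed subsequently, when identifying $\Tor^{R\Gamma}_n(D, M)$ with the group homology $H_n(\Gamma; D \otimes_R M)$ that appears in the conclusion of Proposition~\ref{p:43988hfq0f}.
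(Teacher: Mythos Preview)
Your proposal is correct and follows exactly the approach of the paper, which simply says ``Combine Lemmas~\ref{l:sri} and~\ref{l:adendum}.'' You have spelled out that combination in full detail, correctly tracking which of $\kappa_n^M$ and $\nu_n^M$ carries the injectivity or surjectivity in each range of $n$, and your closing remark about flatness being deferred also matches the paper.
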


\begin{proof}
Combine Lemmas~\ref{l:sri} and~\ref{l:adendum}.
\end{proof}

Having established conditions for the injectivity and surjectivity of the $\varphi_n^M$, we turn to naturality properties of these homomorphisms,  beginning with

\begin{lemma}
For a left $R\Gamma$-module $N$ and an exact sequence of left $R\Gamma$-modules $0 \rightarrow M \rightarrow M' \rightarrow M''
 \rightarrow 0$, the composition  $$\Tor_1^{R\Gamma}(N^*,M'')  \rightarrow N^* \otimes _{R\Gamma} M \rightarrow \Hom_{R\Gamma}(N,M)$$ is $0$, 
where $N^*= \Hom_{R\Gamma}(N,R\Gamma)$.
\end{lemma}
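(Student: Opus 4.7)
The plan is to combine naturality of the canonical evaluation map
\[
\eta_X : N^* \otimes_{R\Gamma} X \longrightarrow \Hom_{R\Gamma}(N, X),\qquad \phi \otimes x \longmapsto \bigl(n \mapsto \phi(n)\cdot x\bigr),
\]
viewed as a natural transformation in the variable $X$, with the left exactness of $\Hom_{R\Gamma}(N,-)$. The second arrow in the composition under consideration is exactly $\eta_M$, so I want to show $\eta_M \circ \delta = 0$, where $\delta$ is the connecting homomorphism.

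First I would write $\iota : M \hookrightarrow M'$ for the injection in the short exact sequence and record that naturality of $\eta$ in its module variable gives a commuting square
\[
\begin{CD}
N^* \otimes_{R\Gamma} M @>\eta_M>> \Hom_{R\Gamma}(N, M) \\
@V{1 \otimes \iota}VV @VV{\iota_*}V \\
N^* \otimes_{R\Gamma} M' @>\eta_{M'}>> \Hom_{R\Gamma}(N, M')
\end{CD}
\]
Next I would invoke exactness of the Tor long exact sequence attached to $0 \to M \to M' \to M'' \to 0$: the image of $\delta : \Tor_1^{R\Gamma}(N^*, M'') \to N^* \otimes_{R\Gamma} M$ coincides with the kernel of $1 \otimes \iota$, so $(1 \otimes \iota) \circ \delta = 0$. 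Commutativity of the square above then gives $\iota_* \circ \eta_M \circ \delta = 0$.

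To finish, I would apply left exactness of $\Hom_{R\Gamma}(N,-)$ to the inclusion $\iota$, which gives that $\iota_*$ is injective. This forces $\eta_M \circ \delta = 0$, as desired. I expect no genuine obstacle here: the argument is a three-term diagram chase that is formal in both $N$ and $M$ and requires no finiteness or projectivity hypotheses — just the naturality of $\eta$ and the standard left exactness of $\Hom_{R\Gamma}(N,-)$.
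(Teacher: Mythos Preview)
Your proof is correct and is exactly the argument the paper gives: the paper simply presents the commutative diagram with exact rows
\[
\begin{CD}
\Tor_1^{R\Gamma}(N^*,M'') @>>> N^*\otimes_{R\Gamma} M @>>> N^*\otimes_{R\Gamma} M' \\
 & & @VVV @VVV \\
0 @>>> \Hom_{R\Gamma}(N,M) @>>> \Hom_{R\Gamma}(N,M')
\end{CD}
\]
and the chase you wrote out (image of $\delta$ dies under $1\otimes\iota$, naturality of $\eta$, and injectivity of $\iota_*$) is precisely how one reads off the conclusion from it.
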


\begin{proof}
The proof follows from the below commutative diagram with exact rows
\[
 \begin{CD} 
\Tor_1^{R\Gamma}(N^*,M'') @>>>   N^* \otimes _{R\Gamma} M @>>>  N^* \otimes _{R\Gamma} M'  \\ 
& & @VVV @VVV \\
0  @>>>   \Hom_{R\Gamma}(N,M) @>>> \Hom_{R\Gamma}(N,M') 
 \end{CD}
  \]
\end{proof}

We will use the previous lemma to construct a diagram of chain complexes in our proof of the following 

\begin{proposition}\label{p:natural7}
  Suppose $\Gamma$ is a group of type $FP_{d-1}$ over $R$, that $\cd_R(\Gamma) \leq d$, and that $H^k(\Gamma ; R\Gamma)$ if $k \neq d$. Let $D= H^d(\Gamma ; R\Gamma)$. 
  
  For any left $R\Gamma$-module $M$, the $R$-module homomorphisms
  $$\varphi^M_n : \Tor^{R\Gamma}_n(D,M) \rightarrow
  H^{d-n}(\Gamma; M)$$ are natural, and they are compatible with the connecting homomorphisms in the long exact homology and cohomology sequences associated to a short exact sequence of coefficient modules.
\end{proposition}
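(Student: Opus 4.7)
The plan is to exhibit $\varphi_n^M$ as the degree-$n$ homology map of a single chain map that is natural in $M$, and then invoke the standard naturality of connecting homomorphisms in the long exact homology sequence of a short exact sequence of chain complexes. Define
$$\Phi^M : A_\bullet \otimes_{R\Gamma} M \longrightarrow \Hom_{R\Gamma}(P_{d-\bullet}, M)$$
as the composite of $h \otimes \mathrm{id}_M$ with the evaluation map $P^*_{d-\bullet} \otimes_{R\Gamma} M \to \Hom_{R\Gamma}(P_{d-\bullet}, M)$. This is a chain map since $h$ is a chain map and evaluation is functorial in the second argument. Under the identifications $H_n(A_\bullet \otimes_{R\Gamma} M) = \Tor_n^{R\Gamma}(D, M)$ (because $A_\bullet$ is a projective resolution of $D$) and $H_n(\Hom_{R\Gamma}(P_{d-\bullet}, M)) = H^{d-n}(\Gamma; M)$, the degree-$n$ homology map of $\Phi^M$ is precisely $\varphi_n^M = \nu_n^M \circ \kappa_n^M$ of Section~\ref{s:2.1}.

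Naturality in $M$ is then immediate. For $f : M \to N$, both $M \mapsto A_\bullet \otimes_{R\Gamma} M$ and $M \mapsto \Hom_{R\Gamma}(P_{d-\bullet}, M)$ are manifestly functorial, and each of the two pieces comprising $\Phi^M$ is natural in $M$, so $\Phi^{(-)}$ is a natural transformation. Passing to homology preserves naturality, giving $\varphi_n^N \circ f_* = f_* \circ \varphi_n^M$.

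For compatibility with connecting homomorphisms, fix a short exact sequence $0 \to M \to M' \to M'' \to 0$ of left $R\Gamma$-modules. Because each $A_i$ is projective (hence flat) and each $P_{d-i}$ is projective, the two functors produce short exact sequences of chain complexes
$$0 \to A_\bullet \otimes_{R\Gamma} M \to A_\bullet \otimes_{R\Gamma} M' \to A_\bullet \otimes_{R\Gamma} M'' \to 0$$
and
$$0 \to \Hom_{R\Gamma}(P_{d-\bullet}, M) \to \Hom_{R\Gamma}(P_{d-\bullet}, M') \to \Hom_{R\Gamma}(P_{d-\bullet}, M'') \to 0$$
whose associated long exact homology sequences are, respectively, the long exact $\Tor$-sequence for $\Tor_\bullet^{R\Gamma}(D, -)$ and the long exact cohomology sequence for $H^\bullet(\Gamma; -)$. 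By the naturality of $\Phi^{(-)}$ just established, the chain maps $\Phi^M, \Phi^{M'}, \Phi^{M''}$ assemble into a morphism between these two short exact sequences of chain complexes. The standard naturality of the connecting homomorphism with respect to such a morphism (a direct consequence of the snake lemma) then yields $\varphi_{n-1}^M \circ \partial = \partial \circ \varphi_n^{M''}$, which is the required compatibility.

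No serious obstacle arises; the argument is formal once the chain map $\Phi^M$ is in hand. The only subtlety to acknowledge is that the intermediate term $P^*_{d-\bullet} \otimes_{R\Gamma} (-)$ appearing in the definition of $\kappa_n^M$ need not send short exact sequences to short exact sequences, since $P^*_{d-i}$ may fail to be flat; this is precisely the phenomenon the preceding lemma compensates for. Our plan sidesteps this issue by bundling $\kappa_n^M$ and $\nu_n^M$ into $\Phi^M$ from the start, so that only the two outer, well-behaved complexes ever feature in the connecting-homomorphism comparison.
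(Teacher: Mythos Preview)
Your proof is correct and takes a genuinely more direct route than the paper. The paper keeps the factorization $\varphi_n^M = \nu_n^M \circ \kappa_n^M$ explicit throughout, and therefore must confront the failure of $P_{d-\bullet}^* \otimes_{R\Gamma}(-)$ to preserve short exact sequences. It repairs this by passing to the quotient complex $(P_{d-\bullet}^* \otimes_{R\Gamma} M)_J$, where $J$ is the image of $\Tor_1^{R\Gamma}(P_d^*, M'') \to P_d^* \otimes_{R\Gamma} M$, and invokes the preceding lemma to check that this quotient still maps to $\Hom_{R\Gamma}(P_{d-\bullet}, M)$; only then can it assemble a three-column commutative diagram with exact columns and read off the compatibility.

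Your key observation is that none of this is needed: once one composes first to form the single natural chain map $\Phi^M$, only the two outer complexes enter, and both $A_\bullet \otimes_{R\Gamma}(-)$ and $\Hom_{R\Gamma}(P_{d-\bullet}, -)$ are exact functors (each $A_i$ projective hence flat, each $P_{d-i}$ projective). The compatibility with connecting maps is then the bare naturality of the long exact sequence under a morphism of short exact sequences of complexes. Your approach is shorter and avoids the auxiliary lemma entirely; the paper's approach, while correct, seems to carry the intermediate complex along out of inertia from the earlier lemmas (where the decomposition into $\kappa$ and $\nu$ genuinely matters for the injectivity/surjectivity analysis) rather than out of necessity here.
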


\begin{proof}
That the $\varphi^M_n$ are natural follows from their definition, since homology is a functor.

To show that the $\varphi^M_n$ are compatible with connecting homomorphisms, we let $0\rightarrow M \rightarrow M' \rightarrow M'' \rightarrow 0 $ be a short exact sequence of left $R\Gamma$-modules, and we let $J$ be the image of $\Tor_1^{R\Gamma}(P_d^*,M'')  \rightarrow P_d^* \otimes _{R\Gamma} M $, so that we have an exact sequence $$ 0 \rightarrow (P_d^* \otimes_{R\Gamma} M)/J   \rightarrow  P_d^* \otimes_{R\Gamma} M'   \rightarrow  P_d^* \otimes_{R\Gamma}  M''  \rightarrow  0$$
Furthermore, if $i \geq 1$, then $P_{d-i}$ is finitely generated so $P^*_{d-i}$ is projective, and thus,  if we let $(P_{d-i}^* \otimes_{R\Gamma} M)_J = P_{d-i}^* \otimes_{R\Gamma} M$ for $i \geq 1$ and $(P_{d}^* \otimes_{R\Gamma} M)_J = (P_{d}^* \otimes_{R\Gamma} M)/J$, then we have a short exact sequence of chain complexes
$$ 0 \rightarrow     (P_{d-\bullet}^* \otimes_{R\Gamma} M)_J     \rightarrow   P_{d-\bullet}^* \otimes_{R\Gamma} M' \rightarrow   P_{d-\bullet}^* \otimes_{R\Gamma} M'' \rightarrow  0$$

By the previous lemma, $J$ is contained in the kernel of  $P_d^* \otimes _{R\Gamma} M \rightarrow \Hom_{R\Gamma}(P_d,M)$ so the composite
$$  A_0 \otimes _{R\Gamma} M  \rightarrow   P_{d}^* \otimes_{R\Gamma} M   \rightarrow \Hom_{R\Gamma}(P_{d},M)   $$ factors through
$    (P_{d-\bullet}^* \otimes_{R\Gamma} M)/J  $.
Thus, we have a commutative diagram of chain complexes with exact columns

\[
 \begin{CD} 
 0   & & 0   & & 0 \\
@VVV @VVV @VVV \\
  A_\bullet \otimes _{R\Gamma} M  @>>>   (P_{d-\bullet}^* \otimes_{R\Gamma} M)_J   @>>>  \Hom_{R\Gamma}(P_{d-\bullet},M)   \\ 
 @VVV @VVV @VVV \\
 A_\bullet \otimes _{R\Gamma} M' @>>>  P_{d-\bullet}^* \otimes_{R\Gamma} M'  @>>>  \Hom_{R\Gamma}(P_{d-\bullet},M')   \\ 
@VVV @VVV @VVV \\
A_\bullet \otimes _{R\Gamma} M''    @>>>    P_{d-\bullet}^* \otimes_{R\Gamma} M''  @>>>   \Hom_{R\Gamma}(P_{d-\bullet},M'')    \\ 
 @VVV @VVV @VVV \\
 0   & &  0 & & 0 \\
 \\
 \end{CD}
  \]

Therefore, for all $n$, we have the following commutative diagram on homology, whose vertical maps are connecting homomorphisms

\[
 \begin{CD} 
  \Tor^{R\Gamma}_n(D, M'')  @>>>   H_{n}(P_{d-\bullet}^* \otimes_{R\Gamma} M'' )  @>>>  H^{d-n}(\Gamma ; M'')   \\ 
 @VVV @VVV @VVV \\
   \Tor^{R\Gamma}_{n-1}(D, M) @>>>  H_{n-1}((P_{d-\bullet}^* \otimes_{R\Gamma} M)_J )   @>>>  H^{d-n+1}(\Gamma ; M )     \\ 
 \\
 \end{CD}
  \]

The top row of the above diagram is $\varphi_n^{M''}$. The bottom row is $\varphi_{n-1}^{M}$ as can be seen by noting that the below commutative diagram of chain maps

\xymatrix{
& & P_{d-\bullet}^* \otimes_{R\Gamma} M \ar[d] \ar[rd] \\
& A_\bullet \otimes _{R\Gamma} M  \ar[ru] \ar[r]   & (P_{d-\bullet}^* \otimes_{R\Gamma} M)_J  \ar[r]  & \Hom_{R\Gamma}(P_{d-\bullet},M)  }

\noindent yields the commutative diagram on homology

\xymatrix{
& & H_{n-1}(P_{d-\bullet}^* \otimes_{R\Gamma} M) \ar[d] \ar[rd] \\
& \Tor^{R\Gamma}_{n-1}(D, M)  \ar[ru] \ar[r]   & H_{n-1}((P_{d-\bullet}^* \otimes_{R\Gamma} M)_J ) \ar[r]  & H^{d-n+1}(\Gamma ; M ) \\
  }  \noindent and noticing that the top row of the above diagram is $\varphi_{n-1}^{M}$ while the bottom row coincides with the bottom row of the preceding commutative rectangle.
  \end{proof}

Last, if we assume that $D$ is flat as an $R$-module, so that $\Tor^{R\Gamma}_n(D,M)\cong H_n(\Gamma;D\otimes _R M)$, then Proposition~\ref{p:43988hfq0f} is Propositions~\ref{p:injasurj} and~\ref{p:natural7}.

\subsection{Alternative characterization of semiduality}
In what remains of this section, we'll prove the following result, given to us by a referee.

\begin{proposition}\label{p:refchar}
Let $\Gamma$ be a group, and suppose $\cd_R\Gamma =d$.
 The following are equivalent:

\bigskip

\noindent {\rm(}i{\rm)} $\Gamma$ is of type $FP_{d-1}$ over $R$, and $H^k(\Gamma;R\Gamma)=0$ if $k \neq d$.

\bigskip

\noindent {\rm(}ii{\rm)} There is a right $R\Gamma$-module $C$ such that the natural map $C \rightarrow C^{**}$ is an injection, and such that if $0\leq n \leq d-2$ then there are natural isomorphisms $$\xi_n: \Tor^{R\Gamma}_{d-1-n}(C,M)\rightarrow H^{n}(\Gamma; M)$$ and  $$\psi _n:H_n(\Gamma;N) \rightarrow \Ext^{d-1-n}_{R\Gamma}(C,N)$$ for left $R\Gamma$-modules $M$ and right $R\Gamma$-modules $N$, and such that there is an exact sequence $$0 \rightarrow H^{d-1} (\Gamma ; M) \rightarrow C \otimes _{R\Gamma} M \rightarrow \Hom_{R\Gamma}(C^*,M)$$ 

\bigskip

\noindent {\rm(}iii{\rm)} All of the conditions in {\rm(}ii{\rm)} hold. In addition, $C^{**}/C\cong H^d(\Gamma;R\Gamma)$, the exact sequence from {\rm(}ii{\rm)} extends to an exact sequence
$$0 \rightarrow H^{d-1} (\Gamma ; M) \rightarrow C \otimes _{R\Gamma} M \rightarrow \Hom_{R\Gamma}(C^*,M)\rightarrow H^d(\Gamma,M) \rightarrow 0$$ and there is an exact sequence
$$0\rightarrow H_d(\Gamma;N) \rightarrow B \otimes _{R\Gamma} C^* \rightarrow \Hom_{R\Gamma}(C,N) \rightarrow H_{d-1}(\Gamma ; N) \rightarrow 0$$

\end{proposition}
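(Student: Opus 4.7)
The plan is to prove the cycle (i) $\Rightarrow$ (iii) $\Rightarrow$ (ii) $\Rightarrow$ (i); the middle implication is immediate since (iii) strictly extends (ii), so all substantive work lies in the outer two. For (i) $\Rightarrow$ (iii), take a projective resolution $0 \to P_d \to P_{d-1} \to \cdots \to P_0 \to R \to 0$ of the trivial left $R\Gamma$-module $R$ with $P_0, \ldots, P_{d-1}$ finitely generated, which exists by $FP_{d-1}$ and $\cd_R \Gamma = d$. The vanishing $H^k(\Gamma; R\Gamma) = 0$ for $k \neq d$ makes the dualized complex exact outside its top, giving
\[ 0 \to P_0^* \to \cdots \to P_{d-1}^* \to P_d^* \to D \to 0. \]
Define $C$ to be the image of $P_{d-1}^* \to P_d^*$, equivalently the cokernel of $P_{d-2}^* \to P_{d-1}^*$. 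Then $0 \to P_0^* \to \cdots \to P_{d-1}^* \to C \to 0$ is a length-$(d-1)$ finitely generated projective resolution of $C$, and a direct computation from it together with the injectivity of $P_d \to P_{d-1}$ yields $C^* \cong P_d$ and $C^{**} \cong P_d^*$; the natural map $C \to C^{**}$ is thus the inclusion $C \hookrightarrow P_d^*$ with quotient $D$. All remaining isomorphisms and exact sequences in (iii) then follow by comparing either the tensor or the Hom of the resolution of $C$ with $M$ or $N$ against $\Hom(P_\bullet, M)$ and $N \otimes P_\bullet$, which compute group cohomology and homology, using the identifications $P_i^* \otimes M \cong \Hom(P_i, M)$ and $\Hom(P_i^*, N) \cong N \otimes P_i$ for $i \leq d-1$.

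For (ii) $\Rightarrow$ (i), the vanishing $H^k(\Gamma; R\Gamma) = 0$ for $k \neq d$ is almost immediate: when $k \leq d-2$, apply $\xi_k$ with $M = R\Gamma$ and $\Tor_*(C, R\Gamma) = 0$ for $* \geq 1$; when $k = d-1$, apply the exact sequence in (ii) with $M = R\Gamma$ together with the injectivity of $C \to C^{**}$. Since Tor commutes with direct sums, the isomorphisms $\xi_n$ force $H^n(\Gamma; -)$ to commute with direct sums for $n \leq d-2$, giving $FP_{d-2}$ by the standard Bieri--Brown finiteness criterion. The conceptual heart of this direction is that $H^{d-1}(\Gamma; -)$ also commutes with direct sums: the natural map $\bigoplus_\alpha \Hom(C^*, M_\alpha) \to \Hom(C^*, \bigoplus_\alpha M_\alpha)$ is injective irrespective of any finiteness on $C^*$, so factoring the map $C \otimes \bigoplus M_\alpha \to \Hom(C^*, \bigoplus M_\alpha)$ from (ii) as $\bigoplus (C \otimes M_\alpha) \to \bigoplus \Hom(C^*, M_\alpha) \hookrightarrow \Hom(C^*, \bigoplus M_\alpha)$ identifies the kernel $H^{d-1}(\Gamma; \bigoplus M_\alpha)$ with $\bigoplus H^{d-1}(\Gamma; M_\alpha)$. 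A four-lemma chase on the right-exact sequence $\Hom(P_{d-2}, -) \to \Hom(K, -) \to H^{d-1}(\Gamma; -) \to 0$, where $K$ is the kernel at the top of a length-$(d-2)$ finitely generated partial projective resolution of $R$, then forces $K$ to be finitely generated, upgrading $FP_{d-2}$ to $FP_{d-1}$.

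The hardest part I expect is the bookkeeping in the (i) $\Rightarrow$ (iii) direction: checking that the Tor and Ext isomorphisms are natural and that the boundary maps correspond across the various exact sequences. By contrast, (ii) $\Rightarrow$ (i) rests on the clean observation that $\bigoplus \Hom(C^*, M_\alpha) \to \Hom(C^*, \bigoplus M_\alpha)$ is injective without any finiteness hypothesis on $C^*$, which is precisely what lets (ii) recover $FP_{d-1}$.
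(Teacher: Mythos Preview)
Your argument is correct, and your treatment of (i) $\Rightarrow$ (iii) is essentially identical to the paper's: same resolution, same definition of $C$ as the image of $P_{d-1}^*\to P_d^*$, same identification $C^*\cong P_d$ via left-exactness of $\Hom(-,R\Gamma)$ and $P_i\cong P_i^{**}$ for $i\leq d-1$, and the same derivation of the four-term exact sequences by comparing $P_i^*\otimes M\cong\Hom(P_i,M)$ and $\Hom(P_i^*,N)\cong N\otimes P_i$ for $i\leq d-1$.

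Where you genuinely diverge from the paper is in (ii) $\Rightarrow$ (i), specifically in establishing $FP_{d-1}$. The paper ignores the $\xi_n$ for this purpose and instead invokes the $\psi_n$: since $\Ext^*_{R\Gamma}(C,-)$ commutes with direct products, the isomorphisms $\psi_n$ force $H_n(\Gamma;-)$ to commute with direct products for $0\leq n\leq d-2$, and then the Bieri--Eckmann product criterion (their Lemma~1.1 and Proposition~1.2) yields $FP_{d-1}$ in a single stroke --- the point being that $H_0(\Gamma;-)=R\otimes_{R\Gamma}-$ commuting with products already encodes finite \emph{presentation}, so the index shift works in one's favor. Your route through $\xi_n$ and direct sums only reaches $FP_{d-2}$ directly, and you then recover the missing degree by your observation that $\bigoplus_\alpha\Hom(C^*,M_\alpha)\hookrightarrow\Hom(C^*,\bigoplus_\alpha M_\alpha)$ is injective regardless of any finiteness on $C^*$, so the kernel description of $H^{d-1}(\Gamma;-)$ in (ii) forces it to commute with direct sums. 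This is a legitimate and rather elegant alternative; its payoff is that your proof of (ii) $\Rightarrow$ (i) never touches the $\psi_n$ at all, so it incidentally shows that the $\psi_n$ hypothesis in (ii) is redundant for the implication back to (i). The paper's route is shorter but uses both halves of (ii).
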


\begin{proof}
Since  ($iii$) is a strictly larger collection of conditions than ($ii$),  ($iii$) implies ($ii$), so in this proof we'll show that ($i$) implies ($iii$) and that ($ii$) implies ($i$). We begin with ($i$) implies ($iii$).

As in Section~\ref{s:2.1}, there is a projective resolution of the trivial left $R\Gamma$-module $R$
$$0 \rightarrow P_d \rightarrow P_{d-1} 
\rightarrow \cdots \rightarrow P_1 \rightarrow P_0 \rightarrow R \rightarrow 0$$  where $P_i$ is finitely generated if $i\leq d-1$, and we let $C$ be the image of $P_{d-1}^* \rightarrow P_{d}^*$. The map  $\xi_n$ is what we had previously named $\nu_n^M$ in the proof of Proposition~\ref{p:natural7}. The proof of the existence of the $\psi_n$ is similar. Indeed, since the $P_i$ are finitely generated projective if $i\leq d-1$, the vertical maps of the commutative diagram below are isomorphisms.

\[
 \begin{CD} 
N \otimes _{R\Gamma} P_{d-1} @>>> \cdots @>>> N \otimes _{R\Gamma} P_{0}  @>>> 0
    \\    @VVV & & @VVV \\
  \Hom_{R\Gamma}(P_{d-1}^*,N)  @>>> \cdots @>>> \Hom_{R\Gamma}(P_{0}^*,N) @>>> 0
 \end{CD}
  \]
The isomorphisms $\psi_n$ are the isomorphisms from the homologies of the two rows.

Now note that because $\Hom_{R\Gamma}(-,R\Gamma)$ is left-exact, the bottom row is exact in the diagram

\[
 \begin{CD} 
 0 @>>>  {P}_d  @>>> {P}_{d-1}  @>>> {P}_{d-2}
    \\  & & & & @VVV @VVV \\
 0 @>>>  C^*  @>>> {P}_{d-1}^{**}  @>>> {P}_{d-2}^{**}
 \end{CD}
  \]
The top row is exact by stipulation, and since $P_{d-1}$ and $P_{d-2}$ are finitely generated projective, the natural vertical maps are isomorphisms. It follows that $C^* \cong P_d$.

Applying the left-exact $\Hom_{R\Gamma}(-,N)$ to  $ P_{d-2}^* \rightarrow P_{d-1}^* \rightarrow C \rightarrow 0$ yields the exact sequence 
$$0\rightarrow \Hom_{R\Gamma}(C,N) \rightarrow \Hom_{R\Gamma}(
P_{d-1}^*,N) \rightarrow \Hom_{R\Gamma}(P_{d-2}^*,N) $$
The isomorphism $N \otimes _{R\Gamma} P_i \cong \Hom_{R\Gamma}(P_i^*,N)$ when $i=d-1,d-2$ implies $\Hom_{R\Gamma}(C,N) $ is the kernel of $N \otimes _{R\Gamma} P_{d-1}  \rightarrow N \otimes _{R\Gamma} P_{d-2}$. Therefore we have exact sequences 
$$N \otimes _{R\Gamma} P_d \rightarrow  \Hom_{R\Gamma}(C,N) \rightarrow H_{d-1}(\Gamma;N) \rightarrow 0$$ and
$$0 \rightarrow H_d(\Gamma;N)\rightarrow  N \otimes _{R\Gamma} P_d \rightarrow \Hom_{R\Gamma}(C,N)$$
Combining these two sequences and replacing $P_d$ with $C^*$ yields the exact sequence
$$0\rightarrow H_d(\Gamma;N) \rightarrow N \otimes _{R\Gamma} C^* \rightarrow \Hom_{R\Gamma}(C,N) \rightarrow H_{d-1}(\Gamma ; N) \rightarrow 0$$

Similarly, we apply the right-exact $-\otimes_{R\Gamma} M$ to $ P_{d-2}^* \rightarrow P_{d-1}^* \rightarrow C \rightarrow 0$ for the exact sequence
 $ P_{d-2}^*\otimes_{R\Gamma} M \rightarrow P_{d-1}^*\otimes_{R\Gamma} M \rightarrow C\otimes_{R\Gamma} M \rightarrow 0$. When $i=d-1,d-2$, we have $P_i^*\otimes_{R\Gamma} M \cong \Hom_{R\Gamma}(P_i,M)$. Hence, $C\otimes_{R\Gamma} M$ is the cokernel of $\Hom_{R\Gamma}(P_{d-2},M) \rightarrow \Hom_{R\Gamma}(P_{d-1},M)$ so we have the exact sequences
$$0 \rightarrow H^{d-1} (\Gamma ; M) \rightarrow C \otimes _{R\Gamma} M \rightarrow \Hom_{R\Gamma}(P_d,M)$$
and
$$C \otimes _{R\Gamma} M \rightarrow \Hom_{R\Gamma}(P_d,M)\rightarrow H^d(\Gamma,M) \rightarrow 0$$ 
Therefore, the below is exact 
$$0 \rightarrow H^{d-1} (\Gamma ; M) \rightarrow C \otimes _{R\Gamma} M \rightarrow \Hom_{R\Gamma}(C^*,M)\rightarrow H^d(\Gamma,M) \rightarrow 0$$

The final conditions of ($iii$) to be checked are that $0 \rightarrow C \rightarrow C^{**} \rightarrow H^d(\Gamma,R\Gamma) \rightarrow 0$ is exact. For this, let $M=R\Gamma$ in the sequence immediately preceding this paragraph. Our proof that ($i$) implies ($iii$) is complete.

To show that ($ii$) implies ($i$), note that the isomorphisms $\psi _n:H_n(\Gamma;N) \rightarrow \Ext^{d-n+1}_{R\Gamma}(C,N)$  for $0\leq n \leq d-2$ and any right $R\Gamma$-module $N$ show that $H_n(\Gamma;-)$ commutes with direct products when $0\leq n \leq d-2$. In particular, $H_n(\Gamma;\prod R\Gamma) \cong \prod H_n(\Gamma;R\Gamma)$ for $0\leq n \leq d-2$. That is, for $n=0$ we have $R \otimes_{R\Gamma} (\prod R\Gamma) \cong \prod R$ and for $1\leq n \leq d-2$ we have $ \Tor^{R\Gamma}_n(R;\prod R\Gamma) =0$, so that $\Gamma$ is of type $FP_{d-1}$ by Lemma 1.1 and Proposition 1.2 of \cite{B-E}.

For $0\leq n \leq d-2$, the existence of isomorphisms $\xi_n: \Tor^{R\Gamma}_{d-1-n}(C,R\Gamma)\rightarrow H^{n}(\Gamma; R\Gamma)$ shows that $H^{n}(\Gamma; R\Gamma)=0$ since $R\Gamma$ is free. That $H^{d-1}(\Gamma; R\Gamma)=0$ follows by recalling that $C$ injects into $C^{**}$ and letting $M=R\Gamma$ in the exact sequence
$$0 \rightarrow H^{d-1} (\Gamma ; M) \rightarrow C \otimes _{R\Gamma} M \rightarrow \Hom_{R\Gamma}(C^*,M)$$

\end{proof}

\section{Translation from topology} \label{sec:trans_to_topology}

\subsection{Cohomology compactly supported over each compact subcomplex.} If a
group $\Gamma$ has a finite Eilenberg-Maclane complex
$X = K(\Gamma, 1)$ with universal cover $\tilde X$ then for any ring
$R$ there is an isomorphism
$H^*(\Gamma; R\Gamma) = H_c^*(\tilde X; R)$. 
In this section we provide an alternative topological characterization
of $H^*(\Gamma, R\Gamma)$ in the case that $X$ is not finite. Our
proof uses standard techniques, which we include for completeness.

Suppose $X$ is a locally finite cell complex with an action by a group
$\Gamma$, and let $\pi: X \to \Gamma\backslash X$ denote the quotient map. Let
$C^*(X; R)$ denote the cellular cochain complex of $X$ with
coefficients in a ring $R$. Define a subcomplex
$\ccu{k}(X; R) \leq C^k(X; R)$ to contain cochains $\phi \in C^k(X; R)$ such
that for every $k$-cell $\sigma \in \Gamma\backslash X$ we have
$\phi(\tilde \sigma) = 0$ for all but finitely many
$\tilde \sigma \in \pi^{-1}(\sigma)$. Then
$d(\ccu{k}(X; R)) \subseteq \ccu{k+1}(X; R)$, and we let $\hcu{*}(X; R)$ be the
cohomology of this complex. We suppress the dependence on the action
of $\Gamma$ from the notation.

\begin{proposition} \label{prop:ourcohomology}
  Suppose $X$ is a locally finite, acyclic cell complex and $\Gamma$
  is a group acting on $X$ with cell stabilizers that are finite and
  preserve orientation. Then
  \[
  H^*(\Gamma; R\Gamma) \cong \hcu{*}(X; R)
  \]
\end{proposition}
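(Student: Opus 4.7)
My plan is to identify $\ccu{*}(X;R)$ with the Hom complex $\Hom_{R\Gamma}(C_*(X;R), R\Gamma)$ and then show the cohomology of this Hom complex computes $\Ext^*_{R\Gamma}(R, R\Gamma) = H^*(\Gamma; R\Gamma)$, circumventing the fact that $C_*(X;R)$ need not be a projective resolution of $R$ when cell stabilizers are nontrivial. First I would construct an isomorphism of cochain complexes $\Phi \colon \ccu{*}(X;R) \to \Hom_{R\Gamma}(C_*(X;R), R\Gamma)$ defined by the formula
\[
\Phi(\phi)(\sigma) = \sum_{g \in \Gamma} \phi(g\sigma)\, g^{-1}.
\]
The finite-support-per-$\Gamma$-orbit hypothesis on $\phi$ ensures each sum lies in $R\Gamma$; a change-of-variable shows $\Phi(\phi)$ is $\Gamma$-equivariant; and the inverse sends $f$ to the cochain $\sigma \mapsto [f(\sigma)]_1$, the coefficient of the identity element in $f(\sigma) \in R\Gamma$. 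Orientation preservation of the stabilizers is used to ensure these formulas are consistent with the cellular cochain structure, and commutation with coboundaries follows from $\Gamma$-equivariance of the cellular boundary.

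Next I would show $\Ext^p_{R\Gamma}(C_k(X;R), R\Gamma) = 0$ for all $p > 0$. Under our hypotheses the chain module decomposes as $C_k(X;R) \cong \bigoplus_\sigma R[\Gamma/\Gamma_\sigma]$ as $R\Gamma$-modules, where the sum runs over $\Gamma$-orbits of $k$-cells; since $\Ext^p$ turns direct sums in the first argument into products, it suffices to treat a single summand. Writing $R[\Gamma/\Gamma_\sigma] \cong R\Gamma \otimes_{R\Gamma_\sigma} R$, Shapiro's lemma identifies $\Ext^p_{R\Gamma}(R[\Gamma/\Gamma_\sigma], R\Gamma)$ with $H^p(\Gamma_\sigma; R\Gamma|_{R\Gamma_\sigma})$. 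The restriction $R\Gamma|_{R\Gamma_\sigma}$ decomposes as $\bigoplus_{t \in \Gamma_\sigma\backslash\Gamma} R\Gamma_\sigma$, a sum of copies of the regular representation; because $\Gamma_\sigma$ is finite, each copy is isomorphic to $\Coind_1^{\Gamma_\sigma} R$, and a second application of Shapiro's lemma yields $H^p(\Gamma_\sigma; R\Gamma_\sigma) = 0$ for $p > 0$. The direct-sum case follows since $\Coind_1^{\Gamma_\sigma}$ commutes with direct sums for finite $\Gamma_\sigma$.

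Finally I would take a Cartan--Eilenberg projective resolution $P_{\bullet,\bullet} \to C_*(X;R)$ by $R\Gamma$-projectives; acyclicity of $X$ makes $\mathrm{Tot}(P_{\bullet,\bullet})$ a projective resolution of the trivial module $R$, so the total cohomology of the double complex $\Hom_{R\Gamma}(P_{\bullet,\bullet}, R\Gamma)$ equals $\Ext^*_{R\Gamma}(R, R\Gamma) = H^*(\Gamma; R\Gamma)$. The spectral sequence filtering in the chain-complex direction has $E_1^{p,q} = \Ext^p_{R\Gamma}(C_q(X;R), R\Gamma)$, which vanishes for $p > 0$ by the previous step and equals $\Hom_{R\Gamma}(C_q(X;R), R\Gamma) \cong \ccu{q}(X;R)$ for $p = 0$ by the first step. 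Hence the spectral sequence collapses onto the row $p = 0$, identifying the total cohomology with $\hcu{*}(X;R)$ and establishing the claim. The principal obstacle is the middle step: both the finite-stabilizer hypothesis (for the finite-group Shapiro calculation) and the orientation-preserving hypothesis (for the untwisted permutation-module decomposition of $C_k(X;R)$) are essential there, and without either one the resulting Ext groups would fail to vanish.
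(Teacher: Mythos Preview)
Your argument is correct and follows the same essential strategy as the paper: a spectral sequence collapses because the higher derived terms vanish via Shapiro's lemma applied twice (once for $\Gamma_\sigma \leq \Gamma$, once for $1 \leq \Gamma_\sigma$) together with finiteness of stabilizers, and the surviving row is then identified explicitly with $\ccu{*}(X;R)$. The only difference is packaging: the paper runs the equivariant-cohomology spectral sequence $E_1^{pq} = H^q(\Gamma; C^p(X;R\Gamma))$ and afterwards identifies the surviving complex $\prod_{\sigma\in\Sigma_p}(R\Gamma)^{\Gamma_\sigma}$ with $\ccu{p}(X;R)$ via a map $\Theta$ written in terms of chosen orbit representatives, whereas you take a Cartan--Eilenberg resolution of $C_*(X;R)$ and run the hyper-$\Ext$ spectral sequence $E_1^{p,q} = \Ext^p_{R\Gamma}(C_q(X;R), R\Gamma)$, with your representative-free formula $\Phi(\phi)(\sigma)=\sum_{g\in\Gamma}\phi(g\sigma)\,g^{-1}$ playing the role of $\Theta$. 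These are dual organizations of the same computation.
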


\begin{proof}
  Recall that the
  equivariant cohomology of the pair $(X,\Gamma)$ with coefficients in
  $R\Gamma$ is defined as
  \[
  H_\Gamma^*(X; R\Gamma) = H^*(\Gamma; C^*(X; R\Gamma))
  \] 
  There is an isomorphism (cf. \cite[VII.7.3, p173]{Brown})
  \[
  H^*(\Gamma; R\Gamma) \cong H_\Gamma^*(X; R\Gamma)
  \]
  
  There is a spectral sequence (cf.\ \cite[p169]{Brown}) 
  \[
  E_1^{pq} = H^q(\Gamma; C^p(X; R\Gamma)) \implies H_\Gamma^{p+q}(X;
  R\Gamma). 
  \]
  We will show $H^q(\Gamma; C^p(X; R\Gamma)) = 0$ for all $q>0$. Let
  $X_p$ denote the set of $p$-cells in $X$ and let $\Sigma_p$ be a set
  of representatives for $\Gamma \backslash X_p$.  Letting
  $\Gamma_\sigma$ denote the stabilizer of $\sigma\in \Sigma_p$, there
  is a decomposition
  \begin{align*}
  C^p(X; R\Gamma) &= \Hom(C_p(X), R\Gamma) \\
                   &\cong \prod_{\sigma\in
                     X_p} R\Gamma \\
                   &\cong \prod_{\sigma \in \Sigma_p}
                     \Coind_{\Gamma_\sigma}^\Gamma( R\Gamma ).
  \end{align*}
  Therefore there is a decomposition of cohomology
  \begin{align*}
    H^q(\Gamma; C^p(X; R\Gamma))
    &\cong H^q \Bigl( \Gamma; \prod_{\sigma \in \Sigma_p}
      \Coind_{\Gamma_\sigma}^\Gamma( R\Gamma ) \Bigr)\\
    &\cong \prod_{\sigma \in \Sigma_p} H^q(\Gamma;
      \Coind_{\Gamma_\sigma}^\Gamma( R\Gamma )) 
  \end{align*}
  Applying Shapiro's lemma yields
  \[
  H^q(\Gamma; C^p(X; R\Gamma)) \cong \prod_{\sigma \in \Sigma_p}
  H^q(\Gamma_\sigma; R\Gamma)
  \]
  Because $\Gamma_\sigma$ is finite, there is an isomorphism of
  $R\Gamma_\sigma$-modules
  $R\Gamma \cong \Coind_{\{1\}}^{\Gamma_\sigma}( \oplus_{\Sigma_p}
  R)$. Therefore another use of Shapiro's lemma shows that
  $H^q(\Gamma_\sigma; R\Gamma) = 0$ for $q>0$. Recall that
  $H^0(\Gamma_\sigma; R\Gamma) \cong (R\Gamma)^{\Gamma_\sigma}$.

  It follows from the above that $H^*(\Gamma; R\Gamma)$ is the
  cohomology of the cochain complex
  \begin{equation} \label{simplespecseq}
    \prod_{\sigma\in \Sigma_0} (R\Gamma)^{\Gamma_\sigma} \to
    \prod_{\sigma\in \Sigma_1} (R\Gamma)^{\Gamma_\sigma} \to
    \prod_{\sigma\in \Sigma_2} (R\Gamma)^{\Gamma_\sigma} \to \dotsb
  \end{equation}
  We will show this chain complex is isomorphic to the chain complex $\{
  \ccu{k}(X, R) \}$. First we compute the coboundary maps of
  (\ref{simplespecseq}). The isomorphism 
  \[
  C^p(X; R\Gamma)^\Gamma \to \prod_{\sigma \in \Sigma_p}
  (R\Gamma)^{\Gamma_\sigma}
  \]
  sends a map $\phi: C_p(X; R) \to R\Gamma$ to the function
  $\xi: \Sigma_p \to R\Gamma$ defined by $\xi(\sigma) = \phi([\sigma])$
  for any $p$-cell $\sigma \in \Sigma_p$. From $\Gamma$-equivariance of
  $\phi$ computation shows that the coboundary operator on the complex
  (\ref{simplespecseq}) is given by
  \[
  d\xi(\sigma) = \sum_{i}r_i \gamma_i \xi(\sigma_i)
  \]
  if $\partial [\sigma] = \sum n_i \gamma_i [\sigma_i]$ for ring elements
  $r_i\in R$, group elements $\gamma_i \in \Gamma$ and simplices
  $\sigma_i\in \Sigma_p$.
  
  Define an isomorphism 
  \[
  \Theta : \prod_{\sigma\in \Sigma_p} (R\Gamma)^{\Gamma_\sigma} \to
  \ccu{p}(X, R)
  \]
  as follows: given $\phi\in \prod_{\sigma\in \Sigma_p} (R\Gamma)^{\Gamma_\sigma}$,
  for any $p$-simplex $\rho$ in $X$ choose $\sigma\in \Sigma_p$ and
  $\gamma\in \Gamma$ such that $\rho = \gamma \sigma$ and set
  \[
  \Theta\phi(\rho) = [\phi(\sigma)]_{\gamma^{-1}}
  \]
  Here $[x]_{\gamma^{-1}}$ denotes the coefficient of $[\gamma^{-1}]$ in
  the formal sum $x \in R\Gamma$. Note $\sigma$ is uniquely specified
  by $\rho$ and $\gamma$ is unique up to right multiplication by
  elements of $\Gamma_\sigma$. Because each $\phi(\sigma)$ is
  $\Gamma_\sigma$-invariant, $\Theta$ does not depend on choice of
  $\gamma$. Moreover, any two $p$-cells in $X$ that belong to the same
  $\Gamma$ orbit will correspond to the same cell $\sigma$ in the above
  equation. Since there are only finitely many terms in the formal sum
  $\phi(\sigma)$, the map $\Theta\phi$ is finitely supported above each
  cell in $X$. Therefore $\Theta$ determines a well-defined homomorphism
  of $\Gamma$-modules.
  
  It is clear that $\Theta$ is injective. To see that $\Theta$ is
  surjective, define an inverse $\Theta^{-1}$ by setting
  $[\Theta^{-1}\xi(\sigma)]_{\gamma} = \xi(\gamma^{-1}\sigma)$. It
  remains only to see that $\Theta$ is compatible with the coboundary
  maps. Suppose $\rho$ is a $(p+1)$-cell in $X$ and
  $\partial [\rho] = \sum_i r_i [\delta_i]$ for ring elements
  $r_i \in R$ and $p$-cells $\delta_i$. For each $i$, write
  $\delta_i = \gamma_i \sigma_i$ for $\gamma_i \in \Gamma$ and
  $\sigma_i\in \Sigma_p$. Then
  \begin{align*}
    d[\Theta\phi](\rho) &= \Theta\phi (\partial \rho)\\
                        &= \sum_{i} r_i [\Theta\phi](\delta_i)\\
                        &= \sum_i r_i [\phi(\sigma_i)]_{\gamma_i^{-1}}.
  \end{align*}
  On the other hand, note that if $\rho = \gamma \sigma$ for $\gamma\in
  \Gamma$ and $\sigma\in \Sigma_{p+1}$, then 
  \[\partial [\sigma] = \sum_i r_i (\gamma^{-1}\gamma_i) [\sigma_i]\]
  Therefore
  \begin{align*}
    \Theta(d\phi)(\rho) &= [(d\phi)(\sigma)]_{\gamma^{-1}}\\
                        &= [\phi(\partial\sigma)]_{\gamma^{-1}}\\
                        &= \left[ \sum_i r_i (\gamma^{-1} \gamma_i) \phi(\sigma_i)
                          \right]_{\gamma^{-1}}  \\
                        &= \sum_i r_i [(\gamma^{-1}
                          \gamma_i)\phi(\sigma_i)]_{\gamma^{-1}} \\
                        &= \sum_i r_i
                          [\phi(\sigma_i)]_{\gamma_i^{-1}}
  \end{align*}
  Thus $\Theta$ commutes with the coboundary operators and hence is an
  isomorphism of chain complexes. This completes the proof.
\end{proof}

\begin{lemma} \label{lem:commaction}
  Let $X$ and $\Gamma$ be as in Proposition
  \ref{prop:ourcohomology}. If $G$ is a locally compact group acting
  cellulary on $X$ and $\Gamma \leq G$ then $\Comm_G(\Gamma)$
  acts on $\hcu{*}(X; R)$.
\end{lemma}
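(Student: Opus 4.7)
\medskip

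The plan is to observe that the subcomplex $\ccup^*(X; R)$ depends on the group $\Gamma$ only up to commensurability, and then exploit this to construct the $\Comm_G(\Gamma)$-action directly on the cochain level.

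For each subgroup $\Lambda \leq G$ acting cellularly on $X$, let $\ccu{k}_\Lambda(X; R) \leq C^k(X; R)$ denote the subcomplex of cochains $\phi$ such that for every $k$-cell $\sigma$ in $\Lambda\backslash X$, $\phi(\tilde\sigma) = 0$ for all but finitely many $\tilde\sigma$ in the $\Lambda$-orbit above $\sigma$. The first step is to establish two elementary claims:

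\textbf{(Finite index invariance)} If $\Lambda' \leq \Lambda$ has finite index, then $\ccu{k}_{\Lambda'}(X; R) = \ccu{k}_\Lambda(X; R)$ as subcomplexes of $C^k(X; R)$. This is because each $\Lambda$-orbit of $k$-cells decomposes into at most $[\Lambda : \Lambda']$ many $\Lambda'$-orbits, so finite support above one partition is equivalent to finite support above the other.

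\textbf{(Conjugation)} For any $g \in G$, the natural action of $g$ on $C^k(X; R)$ carries $\ccu{k}_\Lambda(X; R)$ bijectively onto $\ccu{k}_{g\Lambda g^{-1}}(X; R)$. Indeed, if $\psi = g\cdot\phi$ and $\sigma$ is a $(g\Lambda g^{-1})$-orbit, then $\sigma = g\sigma'$ for a $\Lambda$-orbit $\sigma'$, and the support of $\psi$ above $\sigma$ is precisely the $g$-image of the support of $\phi$ above $\sigma'$.

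With these two claims in hand, let $g \in \Comm_G(\Gamma)$ and set $\Gamma' = g\Gamma g^{-1}\cap \Gamma$, which has finite index in both $\Gamma$ and $g\Gamma g^{-1}$ by definition of the commensurator. Then
\[
g\cdot \ccu{k}_\Gamma(X; R) \;=\; \ccu{k}_{g\Gamma g^{-1}}(X; R) \;=\; \ccu{k}_{\Gamma'}(X; R) \;=\; \ccu{k}_\Gamma(X; R),
\]
where the first equality is the conjugation claim, and the latter two are the finite-index invariance claim applied to $\Gamma' \leq g\Gamma g^{-1}$ and $\Gamma' \leq \Gamma$, respectively. Thus $g$ preserves $\ccup^*(X; R) = \ccup^*_\Gamma(X; R)$.

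Finally, since $G$ acts on $X$ by cellular automorphisms, the induced action on $C^*(X; R)$ commutes with the coboundary maps, so the restriction of this action to $\Comm_G(\Gamma)$ descends to an action on $\hcu{*}(X; R)$. The verification that this is indeed a group action is immediate from the definition. There is no real obstacle here: the content of the lemma is the observation that the subcomplex $\ccup^*$ is well-defined on commensurability classes of subgroups of $G$.
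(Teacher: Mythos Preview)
Your proof is correct and follows essentially the same approach as the paper: both verify directly that each $g\in\Comm_G(\Gamma)$ preserves the cochain subcomplex $\ccup^*(X;R)$, and then pass to cohomology. The only difference is packaging---the paper reformulates membership in $\ccu{k}(X;R)$ as ``$\supp(\phi)\cap\Gamma K$ is compact for every compact $K$'' and checks this is preserved using $g^{-1}\Gamma K\subseteq\Gamma K'$, whereas you factor the same verification cleanly into the two claims (finite-index invariance of $\ccu{k}_\Lambda$ and its behavior under conjugation), which makes the commensurability-invariance of the complex more transparent.
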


\begin{proof}
  Given $\phi\in \ccu{k}(X)$ and $g \in \Comm_G(\Gamma)$, define
  $(g \phi) (\sigma) = \phi(g^{-1} \sigma)$. The condition that
  $g\phi \in \ccu{k}(X)$ is equivalent to the condition that
  $\supp(g\phi) \cap \Gamma K$ is compact for any compact set
  $K\subseteq X$, which is equivalent to
  $\supp(\phi)\cap g^{-1}\Gamma K$ being compact for any
  compact $K$. Fix a compact $K\subseteq X$. Because
  $g \in \Comm_G(\Gamma)$, there is some compact $K'\subseteq X$ such
  that $g\Gamma K \subseteq \Gamma K'$. Therefore $\supp(\phi) \cap
  g^{-1} \Gamma K \subseteq \supp(\phi)\cap \Gamma K'$. The latter is
  compact because $\phi \in \ccu{k}(X)$, so $g\phi \in
  \ccu{k}(X)$. This action commutes with coboundary maps, so induces
  an action on cohomology.
\end{proof}

\subsection{Computing $\hcu{*}(X)$.}

Let $X$ and $\Gamma$ be as in Proposition
\ref{prop:ourcohomology}. Suppose there are subcomplexes
$X_1\subseteq X_2 \subseteq X_3\subseteq \dotsb \subseteq X$ such that
each $X_k$ is closed and $\Gamma$-invariant, each quotient
$\Gamma \backslash X_k$ is compact, and $X = \bigcup_i X_i$. The
compactly supported cellular cochain complexes $C_c^*(X_n; R)$ form a
codirected system under the restriction maps $r_{i,j}^*$ induced by
inclusions $r_{i,j} : X_i \to X_j$ for $i < j$.

The chain complex $\ccu{k}(X; R)$ is the inverse limit of the system
of chain complexes $C_c^k(X_n; R)$ and each restriction map $r_{i,j}$
is surjective on the chain level. It follows (see for example the
``Variant'' following 
\cite[3.5.8, p84]{weibelbook}) that for any $k$ there is a short exact
sequence of cohomology
\begin{equation} \label{eq:limlim1}
0\to \varprojlim{}^1 H_c^{k-1}(X_n; R) \to \hcu{k}(X; R) \to \varprojlim
H_c^{k}(X_n; R) \to 0
\end{equation}
Recall that $\varprojlim^1 H_c^{k-1}(X_n; R)$ is the cokernel of the
map 
\[\Delta: \prod_{n=0}^\infty H_c^{k-1}(X_n; R) \to
\prod_{n=0}^\infty H_c^{k-1}(X_n; R)
\]
defined by 
\[
\Delta(x_0, x_1, x_2, \ldots) = (x_o - r_{0,1}(x_1), x_1-
r_{1,2}(x_2), \ldots)
\]

As a straightforward application of the short exact sequence
\eqref{eq:limlim1} we have:

\begin{proposition} \label{prop:hcu_ex}
  Suppose $X$ is a locally finite cell complex with an action of a
  group $\Gamma$. Suppose $X$ is the union of an increasing sequence
  of $\Gamma$-invariant subcomplexes $X_n$ each with cocompact
  $\Gamma$ action. If there is some integer $d$ such that
  $H_c^{*}(X_n; R)$ is concentrated in dimension $d$ for all $n$ then
  $\hcu{*}(X; R)$ is concentrated in dimensions $d$ and $d+1$. In
  particular, if $X$ is $d$-dimensional then
  \[
  \hcu{k}(X; R) = \begin{cases} \varprojlim_n H_c^d(X_n; R) & k=d \\
    0 & k\neq d. \end{cases}
  \]
\end{proposition}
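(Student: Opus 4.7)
The plan is to apply the short exact sequence \eqref{eq:limlim1} termwise for each cohomological degree $k$, using the hypothesis that $H_c^*(X_n; R)$ vanishes outside dimension $d$. Since the sequence only involves $H_c^{k-1}(X_n;R)$ and $H_c^k(X_n;R)$, the only values of $k$ where either end of the sequence is potentially nonzero are $k = d$ (where $H_c^k(X_n;R)$ can be nonzero) and $k = d+1$ (where $H_c^{k-1}(X_n;R)$ can be nonzero). For all other $k$, both ends vanish and thus $\hcu{k}(X;R) = 0$.

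For $k = d$, the $\varprojlim^1$ term vanishes because $H_c^{d-1}(X_n; R) = 0$ for every $n$, so the sequence collapses to $\hcu{d}(X;R) \cong \varprojlim_n H_c^d(X_n; R)$. For $k = d+1$, the $\varprojlim$ term vanishes because $H_c^{d+1}(X_n;R) = 0$ for every $n$, so the sequence collapses to $\hcu{d+1}(X;R) \cong \varprojlim_n{}^1 H_c^d(X_n; R)$. This establishes the first assertion: cohomology is concentrated in degrees $d$ and $d+1$.

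For the final claim, observe that when $X$ is $d$-dimensional the cochain group $\ccu{d+1}(X;R)$ is a subgroup of $C^{d+1}(X;R) = 0$, so it vanishes and forces $\hcu{d+1}(X;R) = 0$. Combined with the identification above this yields the stated formula. There is no serious obstacle here; the proposition is essentially a bookkeeping consequence of \eqref{eq:limlim1}, and the only subtlety worth flagging is that the vanishing of $\hcu{d+1}(X;R)$ in the $d$-dimensional case comes from dimension of the complex rather than from any Mittag-Leffler condition on the inverse system.
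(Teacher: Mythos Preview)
Your argument is correct and is precisely the ``straightforward application'' of \eqref{eq:limlim1} that the paper invokes in lieu of a written proof. The only point worth noting is that the paper does not spell out the $d$-dimensional case separately, but your observation that $\ccu{d+1}(X;R)=0$ for dimensional reasons is the right way to see it.
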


 \section{Statement of reduction theory}\label{s:red}

In this section we'll review the necessary results needed from reduction theory for our proof of Theorem~\ref{t:mt}. The results in this section are not new, and can be derived from Behr \cite{Be} and Harder \cite{H}, although there are some minor differences between our treatment of reduction theory here and other versions already existing in the literature. A point of difference in the proof of our formulation of these results compared with formulations in other papers, is that we'll use the reduction theory from Bestvina-Eskin-Wortman \cite{B-E-W} as an input, which has the advantage, though not directly applied in this paper, of being equally applicable to arithmetic groups defined with respect to a number field. See also Bux-Wortman \cite{B-W2} and Bux-K\"ohl-Witzel \cite{B-K-W}.

 \subsection{Algebraic form of reduction theory}

In this section and the next we assume that $K$ is a global field with a ring of $S$-integers $\mathcal{O}_S \leq K$ and that $\mathbf{G}$ is a noncommutative, absolutely almost simple, $K$-isotropic, $K$-group with $\text{rank}_{K_v}({\bf G})=1$ for all $v \in S$.

Let $\bf P$ be  a proper $K$-parabolic subgroup of $\bf{G}$.  Let  $\bf A$ be a maximal $K$-split torus in $\bf P$.

From the root system for $({\bf G},{\bf A})$, we denote the simple root for the positive roots with respect to $\bf P$ by $\alpha _0$.

We let ${\bf Z_G}({\bf A})$ be the centralizer of ${\bf A}$ in $\bf G$ so that ${\bf Z_G}({\bf A})={\bf M}{\bf A}$ where ${\bf M}$ is a reductive $K$-group with $K$-anisotropic center. We let $\bf U$ be the unipotent radical of $\bf P$, so that ${\bf P}={\bf U}{\bf M}{\bf A}$. The Levi subgroup ${\bf M}{\bf A}$ normalizes the unipotent radical ${\bf U}$, and elements of ${\bf A}$ commute with those of ${\bf M}$.

We denote the product over $S$ of local points of a $K$-group by ``unbolding", so that, for example, $$G=\prod _{v\in S}{\bf G}(K_v)$$

 We let $\mathcal{P}$ be the set of proper $K$-parabolic subgroups of $\bf G$. If ${\bf Q} \in \mathcal{P}$, then $\bf Q$ is conjugate in ${\bf G}(K)$ to ${\bf P}$. We let $$\Lambda _{\bf Q} = \{\, \gamma f \in  {\bf G}(\mathcal{O}_S)F \mid \, ({\gamma f}){\bf P}({\gamma f})^{-1}={\bf Q} \,\}$$ where $F \se {\bf G}(K)$ is a finite set of coset representatives for  ${\bf G}(\mathcal{O_S}) \backslash {\bf G}(K) / {\bf P}(K)$. Note that if $\gamma_1 f_1, \gamma_2 f_2 \in \Lambda_{\bf Q}$, then $f_1=f_2$.

Given any $a=(a_v)_{v \in S} \in A$, we let 
$$|\alpha_0 (a)|=\prod_{v \in S} |\alpha _0 (a_v)|_v$$ where $|\cdot |_v$ is the $v$-adic norm on $K_v$.

Given any $t>0$, we let $$A^+(t)=\{\, a \in A \mid |\alpha _0 (a)| \geq t \,\}$$ 
and for $t >0$, we let $$R_{\bf Q}(t) =\Lambda _{\bf Q} U M A^+(t)$$

The following is a special case of Proposition 9 from Bestvina-Eskin-Wortman \cite{B-E-W}.

\begin{proposition}\label{p:pu}
There exists a bounded set $B _0 \se G$, and given any $N_0 \geq 0$, there exists $t_0 >1$ and a second bounded set $B_{1} \se G$  such that 
\begin{quote}
$(i)$ $G=\bigcup_{{\bf Q} \in \mathcal{P}}R_{\bf Q}(1)B_0$;
\medskip

\noindent $(ii)$ if ${\bf Q}, {\bf Q '} \in \mathcal{P} $ and ${\bf Q} \neq {\bf Q'}$, then  the distance between \newline \indent $R_{\bf Q}(t_0)B_0$  and $R_{\bf Q'}(t_0)B_0$ is at least $N_0$;
\medskip

\noindent $(iii)$ ${\bf G}(\mathcal{O}_S) \cap R_{\bf Q}(t_0)B_0 = \emptyset $; and
\medskip

\noindent $(iv)$ $G - \big( \bigcup_{{\bf Q} \in \mathcal{P}} R_{\bf Q}(2t_{0})B_{0} \big)$ is contained  \newline \indent in $  {\bf G}(\mathcal{O}_S)B_{ 1}$.

\medskip
\end{quote}
\end{proposition}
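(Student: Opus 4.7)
The plan is to deduce Proposition~\ref{p:pu} directly from Proposition~9 of Bestvina-Eskin-Wortman \cite{B-E-W}, which in that paper is stated in ``Siegel-set'' form: there is a bounded set $\Omega\subseteq G$ such that every $g\in G$ lies within bounded distance of $\gamma\omega uma$ with $\gamma\in{\bf G}(\mathcal{O}_S)$, $\omega\in\Omega$, $u\in U$, $m\in M$, $a\in A^+(1)$, together with finiteness / separation statements controlled by $A^+(t)$ for large~$t$. Our task is mainly a translation between the Siegel-set language attached to the single parabolic ${\bf P}$ and the ``horoball'' language $R_{\bf Q}(t)=\Lambda_{\bf Q}UMA^+(t)$ indexed by all proper $K$-parabolics ${\bf Q}\in\mathcal P$.

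First, since every ${\bf Q}\in\mathcal P$ is ${\bf G}(K)$-conjugate to ${\bf P}$ and $\bf P$ is its own normalizer in ${\bf G}(K)$, the double-coset set ${\bf G}(\mathcal{O}_S)\backslash{\bf G}(K)/{\bf P}(K)$ (known to be finite by Borel) indexes the ${\bf G}(\mathcal{O}_S)$-orbits on $\mathcal P$; fixing representatives $F$ we see that the set $\Lambda_{\bf Q}$ is nonempty for each ${\bf Q}$ and decomposes $\mathcal P$ into ${\bf G}(\mathcal{O}_S)$-orbits according to which $f\in F$ appears. Then from the Bestvina-Eskin-Wortman statement applied with $\Omega$ replaced by the finite union $\bigcup_{f\in F}\Omega f^{-1}$ (to absorb the finitely many conjugators $f$ into a larger bounded set), one extracts a bounded $B_0$ so that for any $g\in G$ we can write $g=(\gamma f)\cdot uma\cdot b$ with $\gamma f\in\Lambda_{{\bf Q}}$ and $b\in B_0$, where ${\bf Q}=(\gamma f){\bf P}(\gamma f)^{-1}$; this establishes (i). The action of $\gamma f$ on the Siegel set is by conjugation on the decomposition $UMA$, so the dependence of the unipotent-factor bound on $f$ is absorbed into $B_0$.

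Conditions (ii) and (iii) follow by choosing $t_0$ large, using the standard fact that translation by $a\in A^+(t)$ scales a fundamental set in $U$ by $|\alpha_0(a)|$, so for $t_0\gg 1$ different $R_{\bf Q}(t_0)B_0$ land in disjoint deep cusps of the corresponding parabolics: the separation statement in Proposition~9(ii) of \cite{B-E-W} says exactly that distinct translates $\gamma_1\Omega UMA^+(t_0)$ and $\gamma_2\Omega UMA^+(t_0)$ are far apart in $G$, and again by the substitution $\gamma\mapsto\gamma f$ this transports to separation between different $R_{\bf Q}(t_0)B_0$. Similarly, the discreteness of ${\bf G}(\mathcal{O}_S)$ in $G$ and the fact that a lattice element that enters the deep part of a Siegel set generates a nontrivial unipotent subgroup give (iii) for $t_0$ sufficiently large; this is already the content of the corresponding clause in \cite[Prop.~9]{B-E-W}. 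For (iv), the shallow complement $G-\bigcup_{\bf Q}R_{\bf Q}(2t_0)B_0$ corresponds, after undoing the same translation, to a bounded subset of the Siegel set covering of \cite{B-E-W}, and so up to ${\bf G}(\mathcal{O}_S)$-translation it lies in a bounded piece $B_1$.

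The main technical subtlety will be bookkeeping with the finite set $F$ of double-coset representatives: one must verify that enlarging the bounded ``fudge factor'' from $\Omega$ to a set $B_0$ covering all $f\in F$ does not destroy properties (ii)--(iv); concretely, one must show that the horoball separation estimates from \cite{B-E-W}, which are proved for translates of a single Siegel set, survive the finite amalgamation over $F$. This follows because $F$ is finite and each $f\in F$ is a fixed element of ${\bf G}(K)$, so conjugation by $f$ distorts $UMA^+(t)$ only in a controlled, bounded way, and the large-$t_0$ arguments of \cite{B-E-W} then apply uniformly across the finitely many pieces.
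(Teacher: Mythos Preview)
The paper does not give a proof of this proposition at all; it simply states it as ``a special case of Proposition~9 from Bestvina--Eskin--Wortman \cite{B-E-W}'' and moves on. Your proposal is therefore aligned with the paper's treatment---you invoke the same source---but you go further by sketching the translation between the Siegel-set formulation of \cite{B-E-W} and the $R_{\bf Q}(t)$ language used here, which the paper leaves entirely implicit.
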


 \subsection{Geometric form of reduction theory}

We will now reformulate Proposition~\ref{p:pu} into a more explicit geometric statement in the form of Proposition~\ref{p:prune2} below.

For $v \in S$, we let $X_v$ be the Euclidean building for ${\bf G}(K_v)$, so that $X_v$ is a tree. We let $X_S=\prod _{v \in S} X_v$.

Let $\Sigma _v \se X_v$ be the geodesic that ${\bf A} (K_v)$ acts on by translations. We let $\Sigma _S= \prod_{v \in S} \Sigma_v$, so that $\Sigma _S $ is isometric to the Euclidean space $\mathbb{R}^{|S|}$. 

We define a linear functional $\widehat{\alpha _0}:\Sigma _S \rightarrow \mathbb{R}$ by associating a basepoint $e \in \Sigma _S$ with the origin as follows: $$\widehat{\alpha_0}(ae)=\log_{p}|\alpha_0(a)|$$ for $a \in  A$. The action of $A$ on $e$ factors through $\mathbb{Z}^{|S|}$, where $\widehat{\alpha_0}$ is linear, so $\widehat{\alpha_0}$ extends to a functional on all of $\Sigma_S$. Furthermore, $\widehat{\alpha_0}$ is nonzero since there is some $a$ with $|\alpha_0 (a)|\neq 1$.

For any $r \in \mathbb{R}$, we let $\Sigma _{S,r} \se \Sigma _S$ be $$\Sigma _{S,r} = \{\, x \in \Sigma _S \mid \widehat{\alpha_0}(x)=r \, \}$$
Thus, $\Sigma _{S,r} $ is a hyperplane in $ \Sigma _S$ that is a finite Hausdorff distance from  ${\bf  A}(\mathcal{O}_S) e \se \Sigma _{S,0}$.

Note that $\Sigma _{S,r}$ is not singular if $|S|>1$. That is to say, the projection of $\Sigma _{S,r}$ to each $\Sigma _v$ is surjective if $|S|>1$. Indeed, to verify this claim observe that if $v \in S$, then ${\bf  A}(\mathcal{O}_S) $ has dense projection to    ${\bf A} (K_v)$, and thus acts cocompactly on $\Sigma _v$.

Now consider the geodesics $\Sigma _v$ to be parameterized as unit speed $\Sigma _v : \mathbb{R} \rightarrow X_v$ with $\Sigma_v(\infty)={\bf P}$.
From our description of $\widehat{\alpha_0}: \Sigma _S \rightarrow \mathbb{R}$, we see that there are positive real numbers $\lambda _v$ such that if $\rho_S :\mathbb{R} \rightarrow X_S$ is given by $\rho_S(t)=(\Sigma _v (\lambda _v t))_{v\in S}$, and if $\beta _{\rho_S} : X_S \rightarrow \mathbb{R}$ is the Busemann function for $\rho _S$ -- that is if  $x \in X_S$, and $d$ is the distance function on $X_S$, then $$\beta _{\rho_S} (x)=\lim_{t \to \infty}(t - d(x,\rho_S(t)))$$
-- then  $\beta _{\rho_S} $ restricted to $\Sigma _S$ is exactly  $\widehat{\alpha_0}$.

Let 
\begin{align*} 
\Sigma ^+_{S,r} & = \{\, x \in \Sigma _S \mid \widehat{\alpha_0}(x)\geq r \, \} \\
& = \{\, x \in \Sigma _S \mid \beta _{\rho_S}(x)\geq r \, \} 
\end{align*} 
so that $\Sigma ^+_{S,r}$ is a half space in $\Sigma _S$ whose boundary equals $\Sigma _{S,r}$.

We let $$B_{\mathbf{P},S,r}= \{\, x \in X _S \mid \beta _{\rho_S}(x)\geq r \, \}$$ and
$$Y_{\mathbf{P},S,r}= \{\, x \in X _S \mid \beta _{\rho_S}(x) = r \, \}$$

\begin{lemma}
$B_{\mathbf{P},S,r}=UM\Sigma _{S,r}^+$ and $Y_{\mathbf{P},S,r}=UM\Sigma _{S,r}$.
\end{lemma}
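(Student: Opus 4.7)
The proof rests on two structural facts: (a) both $U$ and $M$ preserve the Busemann function $\beta_{\rho_S}$, and (b) every point of $X_S$ can be written in the form $umy$ with $u \in U$, $m \in M$, and $y \in \Sigma_S$. Given these, the lemma follows almost immediately: for any $x \in X_S$, write $x = umy$ with $y \in \Sigma_S$, and observe that $\beta_{\rho_S}(x) = \beta_{\rho_S}(umy) = \beta_{\rho_S}(y) = \widehat{\alpha_0}(y)$ using (a) and the fact (established in the paragraph preceding the lemma) that $\beta_{\rho_S}$ restricts to $\widehat{\alpha_0}$ on $\Sigma_S$. Hence $x \in B_{\mathbf{P},S,r}$ if and only if $\widehat{\alpha_0}(y) \geq r$, which is to say $y \in \Sigma_{S,r}^+$; and the identical argument with $\geq r$ replaced by $= r$ handles the statement about $Y_{\mathbf{P},S,r}$.

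To establish (a), I would argue factorwise: for each $v \in S$, the group $\mathbf{U}(K_v)$ fixes the boundary point $\Sigma_v(\infty)$ of the tree $X_v$, and as a unipotent subgroup its orbits on $X_v$ lie in horospheres centered at $\Sigma_v(\infty)$. Consequently $U$ preserves each level set of $\beta_{\rho_S}$. The group $M$ centralizes $A$, so it preserves the $A$-axis $\Sigma_S$ setwise and fixes the direction $\rho_S(\infty)$; in particular $M$ preserves Busemann functions based at $\rho_S(\infty)$. Since $\rho_S$ is a $\rho_S(\infty)$-directed ray, $M$ preserves $\beta_{\rho_S}$ as well.

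For (b), I would invoke the Iwasawa decomposition of $G = \prod_v \mathbf{G}(K_v)$ in the form $G = UMAK$, where $K = \prod_v K_v$ is a product of maximal compact subgroups stabilizing a chosen basepoint $e \in \Sigma_S$ (taking $e_v$ to be a special vertex of each tree $X_v$). Because $G$ acts transitively on $X_S$ and $K$ fixes $e$, we get $X_S = G\cdot e = UMA\cdot e \subseteq UM\cdot \Sigma_S$, the last inclusion holding since $A\cdot e \subseteq \Sigma_S$.

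The main potential obstacle is the verification that $M$ preserves $\beta_{\rho_S}$ rather than merely permuting Busemann functions based at $\rho_S(\infty)$; this is resolved by noting that $M$ centralizes $A$ and so acts on $\Sigma_S$ by isometries commuting with the translations along $\rho_S$, which forces the action to preserve the linear functional $\widehat{\alpha_0}$ (up to an additive constant that vanishes because $M$ stabilizes the fibers of $A\cdot e$ over $M\backslash MA$). The remaining points are routine consequences of standard Bruhat–Tits/Iwasawa theory.
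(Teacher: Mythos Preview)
Your overall strategy matches the paper's: establish that $U$ and $M$ preserve $\beta_{\rho_S}$, and that $X_S = UM\Sigma_S$. However, there are two issues worth flagging.

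First, your argument for (b) contains a genuine gap. The group $G = \prod_v \mathbf{G}(K_v)$ does \emph{not} act transitively on $X_S$. Already for a single factor, $\mathbf{G}(K_v)$ acts on its tree $X_v$ by type-preserving automorphisms, so there are typically two orbits of vertices (e.g.\ $\SLtwo(K_v)$ on its Bruhat--Tits tree), and in any case the orbit of a single point $e$ cannot contain the interiors of edges. So $X_S \neq G\cdot e$, and the Iwasawa decomposition route does not produce the needed surjectivity. The paper bypasses this by using the more elementary fact $X_v = \mathbf{U}(K_v)\Sigma_v$: every point of the tree lies on some bi-infinite geodesic with one end equal to the $\mathbf{P}$-end, and $\mathbf{U}(K_v)$ acts transitively on the set of such geodesics. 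This immediately gives $X_S = U\Sigma_S \subseteq UM\Sigma_S$, and is what you should invoke for (b).

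Second, your treatment of $M$ in (a) is more delicate than it needs to be, and your resolution of the additive-constant issue is not quite convincing as written. The paper's argument is much cleaner: since $\text{rank}_{K_v}\mathbf{G} = 1$, the group $\mathbf{M}$ is $K_v$-anisotropic, so $\mathbf{M}(K_v)$ is compact; and since $\mathbf{M}$ lies in both $\mathbf{P}$ and the opposite parabolic, $\mathbf{M}(K_v)$ fixes both ends of $\Sigma_v$. A compact group of isometries of a tree fixing both ends of a geodesic fixes that geodesic pointwise. Hence $M$ fixes $\Sigma_S$ pointwise, and the additive constant is automatically zero.
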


\begin{proof}
$\bf M$ is contained in both $\bf P$ and the parabolic group opposite to $\bf P$ with respect to $\bf A$. 
Also note that ${\bf M} (K_v)$ is compact for all $v \in S$. It follows that ${\bf M} (K_v)$ fixes $\Sigma _v$ pointwise, and thus that $M$ fixes $\Sigma_S$ pointwise.
Therefore,  $UM\Sigma _{S,r}^+=U\Sigma _{S,r}^+$.

Elements of ${\bf U}(K_v)$ fix unbounded positive rays in $\Sigma _v$, thus elements of $U$ fix pointwise a subray of $\rho _S$, thus $\beta _{\rho_S} $ is invariant under multiplication by $U$. Therefore $UB_{\mathbf{P},S,r}=B_{\mathbf{P},S,r}$, so $UM\Sigma _{S,r}^+\se B_{\mathbf{P},S,r}$ follows from $\Sigma _{S,r}^+\se B_{\mathbf{P},S,r}$.

To see that $ B_{\mathbf{P},S,r} \se U\Sigma _{S,r}^+$, let $x \in  B_{\mathbf{P},S,r}$. Since $X_v = {\bf U}(K_v) \Sigma _v$, we see that $x=u(x_v)_{v \in S}$ for some $u \in U$ and $x_v \in \Sigma_v$. Thus, $x \in U\Sigma _{S,r}^+$, again, since $\beta _{\rho_S} $ is invariant under multiplication by $U$.

That $Y_{\mathbf{P},S,r}=UM\Sigma _{S,r}$ follows similarly.
\end{proof}

Given $t \in \mathbb{R}$, let $r_t \in \mathbb{R}$ be the supremum of all $r \in \mathbb{R}$ such that  $\Sigma _{S,r}^+$ contains $ A^+(t)e$. Notice that there is some $C>0$, independent of $t$, such that the Hausdorff distance between $ A^+(t)e$ and $\Sigma _{S,r_t}^+$ is bounded by $C$. Notice also that $t \mapsto r_t$ is an increasing function.

\begin{lemma}
The Hausdorff distance between $ UMA^+(t)e$ and $B_{\mathbf{P},S,r_t}$ is bounded independent of $t$.\end{lemma}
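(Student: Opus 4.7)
The plan is to combine the preceding lemma with the observation that $UM$ acts by isometries on $X_S$, together with the constant-bounded Hausdorff distance between $A^+(t)e$ and $\Sigma_{S,r_t}^+$ stated just above the lemma.

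First I would record the key input: by the immediately preceding lemma, $B_{\mathbf{P},S,r_t} = UM\Sigma_{S,r_t}^+$, so it suffices to show that the Hausdorff distance between $UMA^+(t)e$ and $UM\Sigma_{S,r_t}^+$ is bounded independent of $t$. Next, I would invoke the remark (stated just before the lemma) that there is a constant $C>0$, independent of $t$, such that the Hausdorff distance between $A^+(t)e$ and $\Sigma_{S,r_t}^+$ is at most $C$.

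The final and main step is the general principle that taking an orbit under a group of isometries does not increase Hausdorff distance: if $H$ acts on a metric space by isometries and two subsets $X,Y$ satisfy $d_H(X,Y) \leq C$, then $d_H(HX,HY) \leq C$, because for any $hx \in HX$ one can pick $y \in Y$ within $C$ of $x$, and then $hy \in HY$ is within $C$ of $hx$, and symmetrically. Applying this with $H=UM$ (a subgroup of $G$ which acts on $X_S$ by isometries), $X = A^+(t)e$, and $Y = \Sigma_{S,r_t}^+$, the conclusion follows immediately.

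I do not expect any real obstacle here; this lemma is essentially a bookkeeping translation of the algebraic statement (Hausdorff closeness of $A^+(t)e$ to the halfspace $\Sigma_{S,r_t}^+$) into the $UM$-equivariant setting that matches the geometric description $B_{\mathbf{P},S,r} = UM\Sigma_{S,r}^+$ of the horoball. The only mild point to be careful about is that $r_t$ is defined as a supremum, but this is exactly why the constant $C$ from the preceding paragraph was introduced, and the independence of $C$ from $t$ is what makes the final Hausdorff bound uniform in $t$.
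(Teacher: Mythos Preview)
Your proposal is correct and follows exactly the same route as the paper's proof: use the preceding lemma to write $B_{\mathbf{P},S,r_t}=UM\Sigma_{S,r_t}^+$, invoke the constant $C$ bounding the Hausdorff distance between $A^+(t)e$ and $\Sigma_{S,r_t}^+$, and then pass to $UM$-orbits. The paper's one-line proof leaves the isometry-orbit step implicit; your explicit justification of that step is a welcome elaboration but not a different argument.
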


\begin{proof}
Because the Hausdorff distance between $ A^+(t)e$ and $\Sigma _{S,r_t}^+$ is bounded, the Hausdorff distance between $ UMA^+(t)e$ and $UM\Sigma _{S,r_t}^+=B_{\mathbf{P},S,r}$ is bounded.
\end{proof}

\begin{lemma} \label{l:horoball-action}
Let $\bf Q \in \mathcal{P}$. If $\gamma \in {\bf G}(\mathcal{O}_S)$ and $f \in F$ are such that $\gamma f \in \Lambda _{\bf Q}$, then for any $r$, we have ${\bf Q}(\mathcal{O}_S) \gamma f B_{\mathbf{P},S,r}=\gamma f B_{\mathbf{P},S,r}$.
\end{lemma}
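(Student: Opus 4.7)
Set $g = \gamma f$, so that by the definition of $\Lambda_{\bf Q}$ we have $g{\bf P}g^{-1}={\bf Q}$. The desired equality ${\bf Q}(\mathcal{O}_S)\, g B_{{\bf P},S,r} = g B_{{\bf P},S,r}$ is equivalent to the statement that, for every $q \in {\bf Q}(\mathcal{O}_S)$, the element $h := g^{-1}qg$ (which lies in ${\bf P}(K)$) preserves the horoball $B_{{\bf P},S,r} = \{x \in X_S : \beta_{\rho_S}(x) \geq r\}$. So I would reduce immediately to proving this invariance for each such $h$.

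Second, I would compute the effect of a general $h \in {\bf P}(K)$ on the Busemann function using the Levi decomposition $h = uma$ with $u \in {\bf U}(K)$, $m \in {\bf M}(K)$, $a \in {\bf A}(K)$. By the proof of the preceding lemma, $u$ preserves $\beta_{\rho_S}$ because it fixes a sub-ray of $\rho_S$ pointwise, and $m$ preserves $\beta_{\rho_S}$ because $M$ fixes $\Sigma_S$ pointwise and acts by isometries on $X_S$. For $a \in {\bf A}(K)$, the defining equation $\widehat{\alpha_0}(ae) = \log_p|\alpha_0(a)|$, combined with the identity $\beta_{\rho_S}|_{\Sigma_S} = \widehat{\alpha_0}$, the decomposition $X_S = UM\Sigma_S$, and the normalization relations between $a$ and $U, M$, yields $\beta_{\rho_S}(az) = \beta_{\rho_S}(z) + \log_p|\alpha_0(a)|$ for every $z \in X_S$. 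Since $\alpha_0$ is a character of $\bf P$ trivial on $\bf U$ and $\bf M$, we have $\alpha_0(a) = \alpha_0(h)$, yielding
\[
\beta_{\rho_S}(hz) = \beta_{\rho_S}(z) + \log_p \lvert \alpha_0(h) \rvert
\]
for all $z \in X_S$. Consequently, $h$ preserves $B_{{\bf P},S,r}$ if and only if $\prod_{v\in S}|\alpha_0(h)|_v = 1$.

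The final step uses the product formula. The composition $\chi := \alpha_0 \circ \mathrm{conj}_{g^{-1}} : {\bf Q} \to \mathbb{G}_m$ is a $K$-defined character of $\bf Q$ with $\chi(q) = \alpha_0(h)$. Because both $q$ and $q^{-1}$ lie in ${\bf Q}(\mathcal{O}_S) \subseteq {\bf G}(\mathcal{O}_S)$, and a $K$-character of $\bf Q$ is regular together with its inverse, the value $\chi(q)$ is an $S$-unit: $\chi(q) \in \mathcal{O}_S^{\times}$. The product formula $\prod_v |\chi(q)|_v = 1$ over all places of $K$, combined with $|\chi(q)|_v = 1$ for every $v \notin S$, gives $\prod_{v\in S} |\chi(q)|_v = 1$, which is precisely the required condition. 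The main subtlety to handle carefully is the verification that a $K$-character of $\bf Q$ sends ${\bf Q}(\mathcal{O}_S)$ into $\mathcal{O}_S^{\times}$; this is the standard compatibility of characters with the integral structure on ${\bf G}(\mathcal{O}_S)$, applied to both $\chi$ and $\chi^{-1}$.
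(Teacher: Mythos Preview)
Your argument is correct and takes a genuinely different route from the paper. The paper argues geometrically by contradiction: any $g \in Q$ sends $\gamma f B_{\mathbf{P},S,r}$ to $\gamma f B_{\mathbf{P},S,r_g}$ for some $r_g$, and if $r_g \neq r$ then high powers $g^n e$ eventually land in $\gamma f B_{\mathbf{P},S,r_{t_0}}$, hence $g^n \in R_{\bf Q}(t_0)B_0$, which Proposition~\ref{p:pu}(iii) forbids for elements of ${\bf G}(\mathcal{O}_S)$. You instead compute the Busemann shift of $h = g^{-1}qg$ directly and kill it with the product formula, never invoking Proposition~\ref{p:pu}. Your route is more self-contained algebraically; the paper's route stays entirely within the reduction-theoretic framework already in place and needs no further input about characters or $S$-units.

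One technical imprecision worth flagging: the simple root $\alpha_0$ is a character of ${\bf A}$ and need not extend to a $K$-character of ${\bf P}$ in general --- only some positive integer multiple $n\alpha_0$ (a generator of $X^*({\bf P})_K$) does. This is harmless: running your product-formula step with that extended character $\chi_0$ yields $\prod_{v\in S}|\alpha_0(a)|_v^{\,n} = 1$, hence the shift vanishes. Relatedly, the factorization $h = uma$ with $m \in {\bf M}(K)$ and $a \in {\bf A}(K)$ separately can fail at the level of $K$-points since ${\bf M}{\bf A}$ is only an almost-direct product; but you only need the Busemann shift, and that is the well-defined quantity $\log_p|\chi_0(h)|$, since the shift homomorphism on ${\bf P}(K_v)$ is continuous, vanishes on ${\bf U}(K_v)$ and on the compact group ${\bf M}(K_v)$, and agrees with $\log_p|\alpha_0(\cdot)|_v$ on ${\bf A}(K_v)$.
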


\begin{proof}
Note that as  $B_{\mathbf{P},S,r}$ is given by the Busemann function for $\rho _S$, $\gamma f  B_{\mathbf{P},S,r}$ is given by the Busemann function for $\gamma f \rho_S$. 

Since $\gamma f \in \Lambda _{\bf Q}$, the positive end of each $\gamma f \Sigma _v $ limits to $\bf Q$. Thus, if $g \in {\bf Q}(K_v)$, then $\gamma f \Sigma _v $ and $g \gamma f \Sigma _v$ intersect in a positive ray. Hence, if $g \in Q$, then $g \gamma f \rho _S$ is a finite Hausdorff distance from $\gamma f \rho _S$ and $g \gamma f B_{\mathbf{P},S,r}=\gamma f B_{\mathbf{P},S,r_g}$ for some $r_g \in \mathbb{R}$. By replacing $g$ with its inverse, we may assume that $r_g\geq r$.

We may assume that the set $B_0$ from Proposition~\ref{p:pu} is a sufficiently large neighborhood of $1 \in G$, independent of $g$, so that, in particular there is a set $B' \se B_0$ containing the point stabilizer of $1$ and such that $B'B'\se B_0$, and by the previous lemma, such that $UMA^+(t)B'e$ contains every vertex of $B_{\mathbf{P},S,r_t}$.

Let $t_0$ be as in Proposition~\ref{p:pu}. If $r_g \neq r$,  then for sufficiently large $n$ we have $g^n e \in \gamma f B_{\mathbf{P},S,r_{t_0}}$. Hence,  $g^n e \in \gamma f UMA^+(t_0) B' e$. Therefore, $g^n \in \gamma f UMA^+(t_0) B' B' \se R_{\bf Q}(t_0)B_0$. We conclude, by Proposition~\ref{p:pu} part (iii), that $g \notin {\bf G}(\mathcal{O}_S)$.
\end{proof}

If $\bf Q \in \mathcal{P}$, we define $$ B_{\mathbf{Q},S,r}=\gamma f B_{\mathbf{P},S,r}$$ for any $\gamma f \in \Lambda _Q$. This is well-defined by the previous lemma, and we also see that ${\bf Q}(\mathcal{O}_S) B_{\mathbf{Q},S,r} = B_{\mathbf{Q},S,r}$ and that the Hausdorff distance between $R_{\bf Q}(t)B_0e$ and $B_{\mathbf{Q},S,r_t}$ is bounded independent of $t$ or $\bf Q$. Using this and that the orbit map $G \rightarrow Ge \se X_S$ is proper, we deduce from Proposition~\ref{p:pu} the following

\begin{proposition}\label{p:prune2}
There exists some $r _0 \in \mathbb{R}$, and given any $N \geq 0$, there is some $ r_1 >r_0$ such that
\begin{quote}
$(i)$ $\bigcup_{{\bf Q} \in \mathcal{P}}B_{\mathbf{Q},S,r_0} = X_S$;
\medskip

\noindent $(ii)$ if ${\bf Q}, {\bf Q '} \in \mathcal{P} $ and ${\bf Q} \neq {\bf Q'}$, then  the distance between $B_{\mathbf{Q},S,r_1}$  and $B_{\mathbf{Q}',S,r_1}$ is at least $N$;
\medskip

\noindent $(iii)$ ${\bf G}(\mathcal{O}_S)e \cap B_{\mathbf{Q},S,r_1} = \emptyset $; and
\medskip

\noindent $(iv)$ $X_S - \big( \bigcup_{{\bf Q} \in \mathcal{P}} B_{\mathbf{Q},S,r_1} \big)$ is a finite Hausdorff distance from $  {\bf G}(\mathcal{O}_S)e$.
\end{quote}
\end{proposition}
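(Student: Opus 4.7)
The plan is to translate each clause of Proposition~\ref{p:pu} into its geometric counterpart for horoballs in $X_S$ via the orbit map $g \mapsto ge$, absorbing the Hausdorff slop between algebraic and geometric horoballs into shifts of the parameter $r$. Let $C$ denote the uniform Hausdorff-distance bound between $R_{\mathbf{Q}}(t)B_0e$ and $B_{\mathbf{Q},S,r_t}$ supplied by the two lemmas preceding this proposition (independent of $t$ and $\mathbf{Q}$), and let $D$ be a constant for which $Ge$ is $D$-dense in $X_S$, which exists by cocompactness of the $G$-action on $X_S$. We may enlarge $B_0$ to absorb the compact stabilizer of $e$ together with the compact set $\{h\in G : d(he,e)\leq C\}$; this is permissible because the conclusions of Proposition~\ref{p:pu} extend to any sufficiently large bounded set. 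Throughout we use that $\beta_{\rho_S}$ is $1$-Lipschitz, so the $\delta$-neighborhood of $B_{\mathbf{Q},S,r}$ coincides with $B_{\mathbf{Q},S,r-\delta}$.

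Part (i) follows from Proposition~\ref{p:pu}(i), the Hausdorff bound, and the $D$-density of $Ge$: letting $s$ denote the value of $r_t$ at $t=1$, set $r_0 := s - C - D$. For parts (ii)--(iv), given $N\geq 0$, apply Proposition~\ref{p:pu} with $N_0 \geq N + 2C$ taken large enough, obtaining $t_0$ and $B_1$, and set $r_1 := r_{t_0}$. Part (ii) is immediate from the Hausdorff bound, which forces $d(B_{\mathbf{Q},S,r_1}, B_{\mathbf{Q}',S,r_1}) \geq N_0 - 2C \geq N$. For (iii), any $\gamma e \in B_{\mathbf{Q},S,r_1}$ with $\gamma\in{\bf G}(\mathcal{O}_S)$ would lie within $C$ of some $ge$ with $g\in R_{\mathbf{Q}}(t_0)B_0$, which by the enlargement of $B_0$ forces $\gamma\in R_{\mathbf{Q}}(t_0)B_0$, contradicting Proposition~\ref{p:pu}(iii). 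For (iv), given $x \in X_S \setminus \bigcup_{\mathbf{Q}} B_{\mathbf{Q},S,r_1}$, choose $ge \in Ge$ within $D$ of $x$; since $R_{\mathbf{Q}}(2t_0)B_0e\subseteq B_{\mathbf{Q},S,r_{2t_0}-C}$ and $r_1 \leq r_{2t_0} - C - D$, we conclude $g\notin \bigcup_{\mathbf{Q}} R_{\mathbf{Q}}(2t_0)B_0$, and Proposition~\ref{p:pu}(iv) places $g$ within ${\bf G}(\mathcal{O}_S)B_1$, hence $x$ within bounded distance of ${\bf G}(\mathcal{O}_S)e$.

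The principal technical obstacle is the simultaneity of conditions (ii)--(iv) for the single choice $r_1 = r_{t_0}$, in particular the inequality $r_{2t_0} - r_{t_0} \geq C + D$ used in the argument for (iv). Since this gap equals a fixed multiple of $\log_p(2)$ that does not grow with $t_0$, one must either verify that the gap exceeds $C+D$, or more generally invoke a refined form of Proposition~\ref{p:pu}(iv) in which the factor of $2$ is replaced by a larger constant $m>1$; this refinement is available from the proof in Bestvina--Eskin--Wortman, where the multiplicative factor can be chosen freely. Once arranged, all four clauses of Proposition~\ref{p:prune2} follow uniformly from a single application of Proposition~\ref{p:pu} with $N_0$ large.
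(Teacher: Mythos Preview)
Your proposal is correct and follows essentially the same approach as the paper, which offers only the terse justification that the result is deduced from Proposition~\ref{p:pu} together with the uniform Hausdorff bound between $R_{\mathbf{Q}}(t)B_0e$ and $B_{\mathbf{Q},S,r_t}$ and properness of the orbit map $G\to Ge$. You have filled in the details the paper omits; in particular, your identification of the gap condition $r_{2t_0}-r_{t_0}\geq C+D$ needed for part~(iv), and its resolution via the freely chosen multiplicative constant in \cite{B-E-W}, is a genuine subtlety that the paper passes over in silence.
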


For any $r \in \mathbb{R}$, we let $X_{S,r}$ be the closure in $X_S$ of $X_S-(\cup _{{\bf Q} \in \mathcal{P}} B_{\mathbf{Q},S,r})$. 

\begin{lemma} \label{l:prop-cocpt}
  For $r \gg 0$, ${\bf G}(\mathcal{O}_S)$ acts properly and cocompactly on $X_{S,r}$.
\end{lemma}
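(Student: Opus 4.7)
The plan is to fix $r=r_1$ provided by Proposition~\ref{p:prune2} applied with any $N\geq 0$; properness then follows from standard facts about discrete groups acting on locally finite complexes, while cocompactness reduces directly to part (iv) of that proposition once one verifies that $X_{S,r_1}$ is ${\bf G}(\mathcal{O}_S)$-invariant.

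For properness, each $X_v$ is a locally finite tree (the residue field at each $v$ is finite), so $X_S$ is a proper CAT(0) cell complex. The subgroup ${\bf G}(\mathcal{O}_S)\leq G$ is discrete and the $G$-stabilizer of any point of $X_S$ is compact, so the ${\bf G}(\mathcal{O}_S)$-stabilizer of any point is finite. This gives a proper action on $X_S$, and restriction to the closed subspace $X_{S,r_1}$ preserves properness.

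For cocompactness, the first task is to show that the family of horoballs $\{B_{\mathbf{Q},S,r_1}\}_{\mathbf{Q}\in\mathcal{P}}$ is permuted by ${\bf G}(\mathcal{O}_S)$. Given $\gamma\in {\bf G}(\mathcal{O}_S)$ and $\gamma_0 f \in \Lambda_{\bf Q}$, the element $\gamma\gamma_0 f$ lies in ${\bf G}(\mathcal{O}_S)F$ and conjugates $\bf P$ to $\gamma\mathbf{Q}\gamma^{-1}$, so $\gamma\gamma_0 f\in \Lambda_{\gamma\mathbf{Q}\gamma^{-1}}$. By the definition of $B_{\mathbf{Q},S,r_1}$, this yields
$$\gamma B_{\mathbf{Q},S,r_1} \;=\; (\gamma\gamma_0)f\, B_{\mathbf{P},S,r_1} \;=\; B_{\gamma\mathbf{Q}\gamma^{-1},S,r_1}.$$
Since $\mathbf{Q}\mapsto \gamma\mathbf{Q}\gamma^{-1}$ permutes $\mathcal{P}$, the union $\bigcup_{\mathbf{Q}\in\mathcal{P}}B_{\mathbf{Q},S,r_1}$, its complement, and the closure $X_{S,r_1}$ are all ${\bf G}(\mathcal{O}_S)$-invariant. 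Now let $N$ be the uniform Hausdorff-distance bound from Proposition~\ref{p:prune2}(iv), so every point of $X_S - \bigcup_{\mathbf{Q}}B_{\mathbf{Q},S,r_1}$ lies within distance $N$ of ${\bf G}(\mathcal{O}_S)e$; by taking limits, the same holds for every point of the closure $X_{S,r_1}$. Set $K := X_{S,r_1}\cap \overline{B}(e,N)$, which is compact since $X_S$ is a proper metric space. For any $x\in X_{S,r_1}$, pick $\gamma\in {\bf G}(\mathcal{O}_S)$ with $d(x,\gamma e)\leq N$; by invariance, $\gamma^{-1}x\in X_{S,r_1}\cap \overline{B}(e,N) = K$, so $x\in \gamma K$. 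Hence $X_{S,r_1} = {\bf G}(\mathcal{O}_S)K$.

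I expect the main (essentially the only) obstacle to be the equivariance identity $\gamma B_{\mathbf{Q},S,r}=B_{\gamma\mathbf{Q}\gamma^{-1},S,r}$, which requires unwinding the definitions of $\Lambda_{\bf Q}$ and $B_{\mathbf{Q},S,r}$; everything else is a routine combination of Proposition~\ref{p:prune2} with standard facts about discrete group actions on proper CAT(0) complexes.
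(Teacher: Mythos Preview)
Your proof is correct and follows essentially the same approach as the paper's: both establish the equivariance identity $\gamma B_{\mathbf{Q},S,r}=B_{\gamma\mathbf{Q}\gamma^{-1},S,r}$ to get invariance of $X_{S,r}$, inherit properness from the action on $X_S$, and deduce cocompactness from Proposition~\ref{p:prune2}(iv). The paper's version simply asserts these three steps, while you unpack each one (verifying the equivariance via the definition of $\Lambda_{\bf Q}$, and spelling out the compact fundamental domain argument); one minor quibble is that you reuse the letter $N$ for the Hausdorff-distance bound in (iv), which in Proposition~\ref{p:prune2} denotes the input separation parameter rather than the output Hausdorff distance.
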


\begin{proof}
Let $\gamma \in {\bf G}(\mathcal{O}_S)$. Then $\gamma B_{\mathbf{Q},S,r}=B_{\gamma \mathbf{Q}\gamma ^{-1},S,r}$ so ${\bf G}(\mathcal{O}_S)$ acts on $\cup _{{\bf Q} \in \mathcal{P}} B_{\mathbf{Q},S,r}$ and thus on $X_{S,r}$.

Since ${\bf G}(\mathcal{O}_S)$ acts properly on $X_{S}$, it acts properly on $X_{S,r}$.

That  ${\bf G}(\mathcal{O}_S)$ acts cocompactly on $X_{S,r}$ follows from (iv) of Proposition~\ref{p:prune2}.
\end{proof}

\section{Cohomology of the complement of disjoint horoballs}
\label{sec:horoball_comp}

In this section, we'll examine the cohomology of subspaces of $X_S$ that include spaces of the form $X_{S,r}$, but are slightly more general in that we will allow ourselves to set the height of each horoball individually, rather than use a single parameter to define the height of all horoballs simultaneously. Precisely, for any tuple  $(r_{\bf Q})_{{\bf Q} \in \mathcal{P}} \in ( \mathbb{R} \cup \{ \infty\} )^{\mathcal{P}}$, we let $X_{S,(r_{\bf Q})}$ be the closure of $X_S-(\cup _{{\bf Q} \in \mathcal{P}} B_{\mathbf{Q},S,r_{\bf Q}})$ in $X_S$, where $B_{\mathbf{Q},S,\infty}$ is taken to be the empty set. 

We shall call a tuple $(r_{\bf Q})_{{\bf Q} \in \mathcal{P}} \in ( \mathbb{R} \cup \{ \infty\} )^{\mathcal{P}}$ \emph{sufficiently large} if the resulting sets $B_{\mathbf{Q},S,r_{\bf Q}}$ are pairwise disjoint, and if their pairwise distance is bounded below by a constant that is sufficiently large. It's known that if $(r_{\bf Q})_{{\bf Q} \in \mathcal{P}}$ is sufficiently large then $X_{S,(r_{\bf Q})}$ is $(|S|-2)$-connected but not $(|S|-1)$-connected (see Stuhler \cite{St}, Bux-Wortman \cite{B-W2}, and Bux-K\"ohl-Witzel \cite{B-K-W}), but these topological properties are not directly relevant to this paper. What we require in this paper, and what we will prove in this section, is that $H_c^{k}(X_{S,(r_{\bf Q})})=0$ if $k \leq |S|-1$ and $(r_{\bf Q})_{{\bf Q} \in \mathcal{P}}$ is sufficiently large. (See Proposition~\ref{p:397} below.) We will begin an inductive proof of this claim by observing that the claim is true when $|S|=1$.

\begin{lemma}
If $|S|=1$, and if $(r_{\bf Q})_{{\bf Q} \in \mathcal{P}}$ is sufficiently large, then the group $H_c^{0}(X_{S,(r_{\bf Q})})$ is trivial, where coefficients are in a ring $R$.
\end{lemma}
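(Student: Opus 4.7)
The plan is to recast $H_c^0(Y;R)=0$ topologically and then exploit the one-endedness of horoballs in a tree. For a locally finite CW complex $Y$, the group $H_c^0(Y;R)$ is the free $R$-module on the compact connected components of $Y$; since $X_S$ is a tree, ``compact component'' is the same as ``finite subtree.'' So the claim reduces to showing that $Y:=X_{S,(r_{\bf Q})}$ has no finite connected component.

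Two observations underlie the argument. First, because the residue field of $K_v$ is finite of size at least two, every vertex of the Bruhat--Tits tree $X_S$ has valence at least three. Second, each horoball $B_{{\bf Q},S,r_{\bf Q}}=\{\beta_{\bf Q}\geq r_{\bf Q}\}$ is a closed subtree whose unique end at infinity is $\xi_{\bf Q}$, since the Busemann function $\beta_{\bf Q}$ tends to $-\infty$ along every geodesic ray in $X_S$ not converging to $\xi_{\bf Q}$.

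I would argue by contradiction. Suppose $C$ is a finite component of $Y$. If $|C|\geq 2$, take $v$ to be a leaf of the subtree $C$; if $|C|=1$, let $v$ be the single vertex of $C$. In either case $v$ has at least two neighbors $w_1,w_2$ in $X_S\setminus C$. Each $w_i$ must in fact lie outside $Y$: otherwise, since $\beta_{\bf Q}$ is linear along each edge of the tree, both endpoints of $[v,w_i]$ would satisfy $\beta_{\bf Q}\leq r_{\bf Q}$ for every $\bf Q$, placing the whole edge in $Y$ and forcing $w_i\in C$. So $w_1,w_2\in\bigcup_{\bf Q}B_{\bf Q}^{\circ}$.

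The final step is where the ``sufficiently large'' hypothesis enters: I choose the pairwise-distance constant for the horoballs to exceed $2$. Since $d(w_1,w_2)\leq 2$, both $w_i$ must lie in a common open horoball $B_{{\bf Q}_0}^{\circ}$. At $v$ in the tree, exactly one neighbor moves toward $\xi_{{\bf Q}_0}$, incrementing $\beta_{{\bf Q}_0}$ by one, while the remaining neighbors decrement it by one. So at least one of $w_1,w_2$ --- say $w_1$ --- is an ``away'' neighbor, giving $\beta_{{\bf Q}_0}(v)=\beta_{{\bf Q}_0}(w_1)+1>r_{{\bf Q}_0}$, hence $v\in B_{{\bf Q}_0}^{\circ}$, contradicting $v\in Y$. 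The only conceptual subtlety is in this last step: a horoball, being one-ended, cannot absorb two outgoing tree-branches of $Y$ at the vertex $v$ once the horoballs are sufficiently well separated.
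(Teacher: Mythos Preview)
Your proof is correct and follows essentially the same strategy as the paper's: both reduce the claim to showing that $X_{S,(r_{\bf Q})}$ has no bounded (equivalently, finite) connected component, and both exploit that each horoball is one-ended together with the separation hypothesis to rule out dead ends. The paper phrases this positively---from any edge one may extend to an infinite simple path---while you argue by contradiction at a leaf of a putative finite component; the underlying geometric content is identical, and your version simply supplies the details the paper leaves implicit.
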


\begin{proof}
In this case, $X_S$ is a tree, and we want to show that the components of $X_{S,(r_{\bf Q})}$ are unbounded. Indeed, choose an edge $e_0 \in X_{S,(r_{\bf Q})}$. Because $(r_{\bf Q})_{\bf Q}$ is sufficiently large, there is an adjacent edge $e_1 \in X_{S,(r_{\bf Q})}$, and we can continue in this fashion to create an path of infinite length in $X_{S,(r_{\bf Q})}$ that begins with $e_0$.
\end{proof}

Our proof of Proposition~\ref{p:397} will include an investigation of spaces that are quite similar to the space $X_{S,(r_{\bf Q})}$. Precisely, 
for any $(r_{\bf Q})_{{\bf Q} \in \mathcal{P}}$, let $W_{S,(r_{\bf Q})}$ be the subcomplex of $X_S$ consisting of all cells of $X_S$ that are contained in $X_{S,(r_{\bf Q})}$. To see that there isn't much difference between $X_{S,(r_{\bf Q})}$ and $W_{S,(r_{\bf Q})}$ we have

\begin{lemma} If the tuple $(r_{\bf Q})_{{\bf Q} \in \mathcal{P}}$ is sufficiently large, then there is a proper homotopy equivalence between $W_{S,(r_{\bf Q})}$ and $X_{S,(r_{\bf Q})}$.
\end{lemma}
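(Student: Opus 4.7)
The plan is to construct a proper strong deformation retraction of $X_{S,(r_{\bf Q})}$ onto $W_{S,(r_{\bf Q})}$, which immediately yields the desired proper homotopy equivalence. The first step is to use sufficient largeness to arrange that the pairwise distances between the horoballs $B_{\mathbf{Q},S,r_{\bf Q}}$ exceed the (uniform) diameter of any cell of $X_S$. With this in hand, each cell $\sigma$ of $X_S$ meets at most one horoball, so $\sigma$ is either entirely in $W_{S,(r_{\bf Q})}$, entirely inside a single horoball (and hence disjoint from $X_{S,(r_{\bf Q})}$), or straddles the boundary of exactly one horoball $B_{\mathbf{Q},S,r_{\bf Q}}$.

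Next I would analyze a straddling cell $\sigma$. Since $X_S$ is a product of trees and $\rho_{S}$ is a product of geodesic rays, the Busemann function $\beta := \beta_{\rho_{S}}$ restricts to an affine function on $\sigma$ (it is a positive linear combination of the Busemann functions on each factor, each of which is linear on an edge). Hence $\sigma_- := \sigma \cap X_{S,(r_{\bf Q})} = \sigma \cap \{\beta \leq r_{\bf Q}\}$ is a convex polytope, and the subcomplex $\partial_-\sigma$ of $\partial \sigma$ consisting of faces $\tau \leq \sigma$ with $\beta|_\tau \leq r_{\bf Q}$ is precisely $\sigma \cap W_{S,(r_{\bf Q})}$. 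The $\beta$-minimizing vertex $v_\sigma$ of $\sigma$ lies in $\partial_-\sigma$, so $\partial_-\sigma$ is nonempty; choosing cube coordinates on $\sigma$ with $v_\sigma$ at the origin and all slopes of $\beta$ non-negative, both $\sigma_-$ and $\partial_-\sigma$ are star-shaped at $v_\sigma$.

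The key geometric step is to build local strong deformation retractions $H^\sigma : \sigma_- \times [0,1] \to \sigma_-$ of $\sigma_-$ onto $\partial_-\sigma$, and to glue them across cells. I would do this by induction on the skeletal dimension of $X_S$. In the inductive step for a straddling cell $\sigma$, the retraction is already defined on $\partial\sigma \cap X_{S,(r_{\bf Q})}$ and maps into $\partial_-\sigma$; the homotopy extension property applied to the cofibration $(\partial\sigma \cap \sigma_-) \hookrightarrow \sigma_-$, combined with the contractibility of both $\sigma_-$ and $\partial_-\sigma$ relative to $v_\sigma$, extends the retraction across the interior of $\sigma_-$. The hard part will be precisely this coherence of local retractions across shared faces of straddling cells, which the cell-by-cell inductive construction is designed to resolve.

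Finally, since each local homotopy $H^\sigma$ is supported inside $\sigma$, the global homotopy $H$ moves each point by at most $\max_\sigma \mathrm{diam}(\sigma)$, which is finite by uniformity of cells in $X_S$. Combined with local finiteness of $X_S$, the preimage under $H$ (or under $H_1$) of any compact subset of $X_{S,(r_{\bf Q})}$ meets only finitely many cells and is therefore compact, so both $H$ and the retraction $H_1$ are proper. Together with the proper closed inclusion $W_{S,(r_{\bf Q})} \hookrightarrow X_{S,(r_{\bf Q})}$, this produces the claimed proper homotopy equivalence.
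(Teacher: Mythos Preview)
Your proof is correct and complete at the level of a sketch, but it takes a different route from the paper's argument. The paper proceeds by Morse theory with the Busemann function as height: for each chamber $\mathfrak{C}$ straddling a single horoball boundary, it locates the $\beta$-\emph{maximal} vertex $(x_v)_v$, builds the descending star $K_S = \prod_v \mathcal{C}_v$ around it (where $\mathcal{C}_v$ is the cone on the descending link $\mathcal{L}_v$ in the tree $T_v$), and retracts $K_S$ minus its top vertex onto $\bigcup_v K_{S-v}$ in a $\beta$-nonincreasing way, iterating down through vertices until everything lands in $W_{S,(r_{\bf Q})}$. Your argument instead works from the $\beta$-\emph{minimal} vertex $v_\sigma$ of each straddling cell: you observe that $\sigma_- = \sigma \cap \{\beta \le r_{\bf Q}\}$ and $\partial_-\sigma = \sigma \cap W_{S,(r_{\bf Q})}$ are both star-shaped at $v_\sigma$ (your verification that $\partial_-\sigma$ is a union of coordinate faces through $v_\sigma$ is the key point here), and then assemble local retractions by induction on skeleta using the homotopy extension property. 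The Morse approach has the advantage of being a recognizable off-the-shelf technique and handles all chambers sharing a given top vertex simultaneously; your approach is more elementary, avoids Morse-theoretic language entirely, and makes the bounded-displacement (hence properness) conclusion immediate since each $H^\sigma$ is confined to $\sigma$.
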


\begin{proof} The proof is an observation through Morse theory. Suppose that $\mathfrak{C} \se X_S$ is a chamber that intersects $X_{S,(r_{\bf Q})}$ nontrivially, but is not contained in $X_{S,(r_{\bf Q})}$, and thus is not contained in $W_{S,(r_{\bf Q})}$.

Because $(r_{\bf Q})_{{\bf Q} \in \mathcal{P}}$ is sufficiently large,  $\mathfrak{C}$ intersects $B_{\mathbf{Q},S,r_{\bf Q}}$ for a unique $\bf Q$. Recall that $B_{\mathbf{Q},S,r_{\bf Q}}$ is defined as the inverse image of a positive ray with respect to  the Busemann function $\beta _{\gamma f \rho _S} : X_S \rightarrow \mathbb{R}$ associated to the geodesic ${\gamma f \rho _S} \se X_S$ where $\gamma f \in \Lambda_{\bf Q}$.

Let $(x_v)_{v\in S}$ be the maximum point of $\mathfrak{C}$ with respect to $\beta _{\gamma f \rho _S}$. Let $\mathcal{L}_v$ be the descending link of $x_v$ in the tree $X_v$ with respect to $\beta _{\gamma f \Sigma _v}:X_v \rightarrow \mathbb{R}$. We let $\mathcal{C}_v$ be the cone on $\mathcal{L}_v$ taken at $x_v$ in the tree $X_v$.

For $T \se S$, we let $K_T=\prod_{v \in T} \mathcal{C}_v \times \prod_{v \notin T} \mathcal{L}_v$.

Now we are assuming that $(x_v)_{v\in S} \notin X_{S,(r_{\bf Q})}$, and note that $K_S -  (x_v)_{v\in S}$ deformation retracts onto $ \cup _{v \in S} K_{S-v}$ in such a way that the homotopy is nonincreasing with respect to $\beta _{\gamma f \rho _S}$. Note further that the maximum points in any $K_{S-v_0}$ with respect to $\beta _{\gamma f \rho _S}$ are points of the form  $(y_v)_{v\in S} $ where $y_v=x_v$ if $v \neq v_0$, and if these points are not in $X_{S,(r_{\bf Q})}$, then we can further retract $K_{S-v_0}$ minus these maximums onto $\cup _{v \in S - v_0}K_{S-\{v_0,v\}}$. We continue in this fashion until all of $K_S$ has been retracted onto some union of $K_{T}$ with $K_T \subseteq X_{S,(r_{\bf Q})}$.

\end{proof}

In particular, the previous two lemmas show that 

\begin{lemma}\label{l:947hdbk3129p}
If $|S|=1$, and if $(r_{\bf Q})_{{\bf Q} \in \mathcal{P}}$ is sufficiently large, then $H_c^{0}(W_{S,(r_{\bf Q})})=0$.
\end{lemma}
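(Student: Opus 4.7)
The plan is to observe that Lemma~\ref{l:947hdbk3129p} is an immediate combination of the two preceding lemmas in the excerpt. The first of those two lemmas, which handles the rank-one case directly by showing that the components of $X_{S,(r_{\bf Q})}$ are unbounded, gives $H_c^{0}(X_{S,(r_{\bf Q})}) = 0$ when $|S|=1$. The second lemma provides a proper homotopy equivalence between $W_{S,(r_{\bf Q})}$ and $X_{S,(r_{\bf Q})}$ whenever $(r_{\bf Q})_{{\bf Q}\in\mathcal{P}}$ is sufficiently large.

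So the strategy is simply to invoke proper homotopy invariance of compactly supported cohomology. Under a proper homotopy equivalence $f: W_{S,(r_{\bf Q})} \to X_{S,(r_{\bf Q})}$, the induced map $f^*: H_c^{*}(X_{S,(r_{\bf Q})}) \to H_c^{*}(W_{S,(r_{\bf Q})})$ is an isomorphism. Applying this in degree zero and using the first lemma yields the conclusion.

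There is no real obstacle here; the lemma is a formal corollary of what has already been established. The only thing to be careful about is citing proper homotopy invariance of $H_c^*$ correctly, but since both spaces are locally finite CW complexes (one is $X_S$ with certain half-spaces removed, the other is its cellular approximation), this is standard. Thus the proof will amount to a single sentence combining the two preceding lemmas.
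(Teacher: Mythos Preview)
Your proposal is correct and matches the paper's approach exactly: the paper simply states that the previous two lemmas show this result, which is precisely the combination you describe (proper homotopy invariance of $H_c^*$ applied to the equivalence $W_{S,(r_{\bf Q})}\simeq X_{S,(r_{\bf Q})}$, together with the computation $H_c^0(X_{S,(r_{\bf Q})})=0$ in the $|S|=1$ case).
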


This lemma will serve as the base step for our inductive proof that $H_c^{k}(W_{S,(r_{\bf Q})})=0$ if $ k\leq |S|-1$ and $(r_{\bf Q})_{{\bf Q} \in \mathcal{P}}$ is sufficiently large, which implies that  $H_c^{k}(X_{S,(r_{\bf Q})})=0$ if $ k\leq |S|-1$ and $(r_{\bf Q})_{{\bf Q} \in \mathcal{P}}$ is sufficiently large.

\subsection{Proper products}

Now we will focus on the case when $|S|\geq 2$. We choose some $w \in S$ and let $\pi _w : X_S \rightarrow X_{w}$ be the projection.

Note that by definition of $W_{S,(r_{\bf Q})}$, if $e$ is an edge in $X_w$, and if $e ^\circ$ is the interior of $e$, then $\pi_w |_{W_{S,(r_{\bf Q})}} : W_{S,(r_{\bf Q})} \rightarrow X_w$ has $\pi _w ^{-1} (e^\circ)=e^\circ \times Z_e$ for some complex $Z_e \se X_{S-w}$. Our inductive proof in the remainder of this section is aided by observing that the fibers $\pi_w$ restricted to one of these ``$W$ spaces'' is another ``$W$ space".

\begin{lemma}\label{l:fib}
For any edge $e \se X_w$, $Z_e = W_{S-w,(s^e_{\bf Q})}$ for some tuple $(s^e_{\bf Q})_{{\bf Q} \in \mathcal{P}}$.
Furthermore, by choosing $(r_{\bf Q})_{{\bf Q} \in \mathcal{P}}$ sufficiently large we may assume that $(s^e_{\bf Q})_{{\bf Q} \in \mathcal{P}}$ is sufficiently large for each edge $e \se X_w$.
\end{lemma}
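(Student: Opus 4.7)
The plan is to exploit the product-metric decomposition of the Busemann function $\beta_{\gamma f \rho_S}$ on $X_S$. For a CAT(0) product with the Euclidean product metric and a unit-speed geodesic whose coordinate speeds are $\lambda_v$ (so $\sum_v \lambda_v^2 = 1$), a direct computation gives
\[
\beta_{\gamma f \rho_S}\bigl((x_v)_{v\in S}\bigr) \;=\; \sum_{v\in S}\lambda_v\, \beta^{\bf Q}_v(x_v),
\]
where $\beta^{\bf Q}_v$ denotes the Busemann function on the tree $X_v$ for the unit-speed ray $\gamma f \Sigma_v$ limiting to ${\bf Q}$. I would record this identity first, as the rest of the lemma is a consequence of it.

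Any cell of $X_S$ projecting via $\pi_w$ onto $e^\circ$ is of the form $e\times\sigma$ for some cell $\sigma\subseteq X_{S-w}$, and it lies in $W_{S,(r_{\bf Q})}$ exactly when $\sup_{e\times\sigma}\beta_{\gamma f \rho_S}\le r_{\bf Q}$ for every ${\bf Q}\in\mathcal{P}$. The decomposition separates this into
\[
\max_{z\in\sigma}\sum_{v\in S-w}\lambda_v\,\beta^{\bf Q}_v(z_v)\;\le\;r_{\bf Q}-\lambda_w\max_{y\in e}\beta^{\bf Q}_w(y).
\]
The left-hand side equals $\mu\cdot\max_{z\in\sigma}\beta_{\gamma f \rho_{S-w}}(z)$, where $\mu=\sqrt{\sum_{v\ne w}\lambda_v^2}$ and $\rho_{S-w}$ is the unit-speed rescaling of the projected geodesic in $X_{S-w}$. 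Setting $s^e_{\bf Q}=\mu^{-1}\bigl(r_{\bf Q}-\lambda_w\max_{y\in e}\beta^{\bf Q}_w(y)\bigr)$, the inequality becomes precisely the cell-membership condition $\sigma\subseteq W_{S-w,(s^e_{\bf Q})}$, giving $Z_e=W_{S-w,(s^e_{\bf Q})}$.

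For the second claim I plan a direct distance comparison. Suppose $B_{{\bf Q}_1,S-w,s^e_{{\bf Q}_1}}$ and $B_{{\bf Q}_2,S-w,s^e_{{\bf Q}_2}}$ contain points $p_1,p_2$ with $d_{X_{S-w}}(p_1,p_2)\le N'$ for distinct ${\bf Q}_1,{\bf Q}_2\in\mathcal{P}$. Choose $q_i\in e$ realizing $\max_{y\in e}\beta^{{\bf Q}_i}_w$ and set $x_i=(q_i,p_i)\in X_S$; by the definition of $s^e_{{\bf Q}_i}$ one has $x_i\in B_{{\bf Q}_i,S,r_{{\bf Q}_i}}$, while $d_{X_S}(x_1,x_2)\le\sqrt{1+N'^2}$ because $q_1,q_2$ both lie in the unit-length edge $e$. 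Hence if $(r_{\bf Q})$ is chosen so that horoballs in $X_S$ are pairwise separated by more than $\sqrt{1+N^2}$, the horoballs in $X_{S-w}$ are pairwise separated by more than $N$, and in particular are pairwise disjoint. The only point requiring real care is bookkeeping of the normalization constants $\lambda_v$ and $\mu$, which depend only on the root datum of $({\bf G},{\bf A})$ and are uniform in ${\bf Q}$ since all parabolics in $\mathcal{P}$ are ${\bf G}(K)$-conjugate; beyond that, no substantive obstacle is anticipated.
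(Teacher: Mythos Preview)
Your proposal is correct and takes essentially the same approach as the paper: both use the additive decomposition of the Busemann function on the product to reduce the membership condition for $e\times\mathfrak{F}$ in $W_{S,(r_{\bf Q})}$ to a condition on $\mathfrak{F}$ alone, the paper merely asserting the existence of $s^e_{\bf Q}$ depending on $\max_{y\in e}\beta_{\gamma f\Sigma_w}(y)$ where you compute it explicitly with the normalizing constants $\lambda_v$ and $\mu$. Your distance-comparison argument for the second claim is correct and in fact supplies a step the paper's proof leaves entirely unaddressed; the only cosmetic adjustment is that the edge length in $X_w$ need not be exactly $1$, but any uniform bound works the same way.
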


\begin{proof}
Let $x_w \in X_w$ be the  endpoint of $e$ that maximizes $\beta _{\gamma f \Sigma _w}$ for $\gamma f \in \Lambda _{\bf Q}$. Then  a cell $\mathfrak{F}\se X_{S-w}$ is contained in $Z_e$ exactly if $$\beta _{\gamma f \rho _S}(e \times \mathfrak{F})\leq r_{\bf Q}$$ which is equivalent to 
$$ \beta _{\gamma f \rho _S}(x_w \times \mathfrak{F})\leq r_{\bf Q}$$ and thus to 
$\beta _{\gamma f \rho _{(S-w)}}( \mathfrak{F})\leq s^e_{\bf Q}$ for some $s^e_{\bf Q}$ depending
$\beta _{\gamma f \Sigma _w}(x_w)$, and thus on $e$. 
\end{proof}

\begin{lemma}
Let $\gamma f \in \Lambda _{\bf Q}$. If $e_1,e_2 \in X_w$ are edges, and if the maximum of $\beta _{\gamma f \Sigma _w}(e_1)$ is greater than or equal to the maximum of $\beta _{\gamma f \Sigma _w}(e_2)$, then $s^{e_1}_{\bf Q}\leq  s^{e_2}_{\bf Q}$. If $\beta _{\gamma f \Sigma _w}(e_1)=\beta _{\gamma f \Sigma _w}(e_2)$, then $s^{e_1}_{\bf Q} =  s^{e_2}_{\bf Q}$.
\end{lemma}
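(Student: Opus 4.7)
The plan is to make explicit the implicit dependence of $s^e_{\bf Q}$ on the endpoint height $\beta_{\gamma f \Sigma_w}(x_w)$ that was left informal at the end of the proof of Lemma \ref{l:fib}. The key structural input is the product-geodesic decomposition of the Busemann function $\beta_{\gamma f \rho_S}$ on $X_S = X_w \times X_{S-w}$. Because $\gamma f \rho_S(t) = (\gamma f \Sigma_v(\lambda_v t))_{v \in S}$ is a product geodesic in the CAT(0) product with each $\lambda_v > 0$, a standard computation (comparing $d_{X_S}^2$ to $\sum_v d_{X_v}^2$ and taking the limit) shows that there exist positive constants $a_w, b > 0$, depending only on the $\lambda_v$, such that
\[
\beta_{\gamma f \rho_S}(x_w, y) \;=\; a_w\, \beta_{\gamma f \Sigma_w}(x_w) \;+\; b\, \beta_{\gamma f \rho_{(S-w)}}(y)
\]
for every $(x_w, y) \in X_w \times X_{S-w}$.

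Next I would invoke the criterion from the proof of Lemma \ref{l:fib}: a cell $\mathfrak{F} \subseteq X_{S-w}$ lies in $Z_e$ if and only if $\beta_{\gamma f \rho_S}(x_w \times \mathfrak{F}) \leq r_{\bf Q}$, where $x_w$ is the endpoint of $e$ that maximizes $\beta_{\gamma f \Sigma_w}$. Substituting the decomposition above into this inequality and solving yields the explicit formula
\[
s^e_{\bf Q} \;=\; \frac{r_{\bf Q} - a_w\, \beta_{\gamma f \Sigma_w}(x_w)}{b}.
\]

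Since $a_w, b > 0$, this expression is a strictly decreasing affine function of $\beta_{\gamma f \Sigma_w}(x_w)$. Therefore, if the maximum of $\beta_{\gamma f \Sigma_w}$ on $e_1$ is at least the maximum on $e_2$, the corresponding values of $s^e_{\bf Q}$ satisfy the reverse inequality $s^{e_1}_{\bf Q} \leq s^{e_2}_{\bf Q}$, and equality of the maxima forces equality of the $s^{e_i}_{\bf Q}$. Both assertions of the lemma follow.

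The only nontrivial step is justifying the positive linear decomposition of $\beta_{\gamma f \rho_S}$ across the product, which is the expected behavior of Busemann functions of product geodesics; the positivity of all coefficients follows from the positivity of each $\lambda_v$. Once that decomposition is in place, the monotonicity statement is immediate arithmetic, and no further reduction-theoretic input is needed.
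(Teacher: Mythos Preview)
Your argument is correct, and in fact cleaner than the paper's. The additive splitting of the Busemann function of a product geodesic across a CAT(0) product is standard, and once you have the explicit formula $s^e_{\bf Q} = \bigl(r_{\bf Q} - a_w\,\beta_{\gamma f \Sigma_w}(x_w)\bigr)/b$ with $a_w,b>0$, both monotonicity claims are immediate.

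The paper takes a different route: it never writes down the product decomposition of $\beta_{\gamma f \rho_S}$, and instead argues geometrically inside $X_w$. It picks a geodesic $\chi_w \subseteq X_w$ limiting to $\mathbf{Q}$ and containing $e_1$. If $e_2$ also lies on $\chi_w$, the comparison of $s^{e_i}_{\bf Q}$ follows directly from the description of $s^e_{\bf Q}$ in the proof of Lemma~\ref{l:fib}. If $e_2$ does not lie on $\chi_w$, the paper uses that ${\bf U}(K_w)$ acts transitively on the geodesics asymptotic to $\mathbf{Q}$ to find $u \in {\bf U}(K_w)$ with $e_2 \subseteq u\chi_w$, and then invokes the invariance of both $\beta_{\gamma f \Sigma_w}$ and $\beta_{\gamma f \rho_{S-w}}$ under ${\bf U}(K_w)$ to transport the one-geodesic comparison. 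Your approach avoids the unipotent action altogether and works for arbitrary products of trees with arbitrary product rays, not just the arithmetic situation; the paper's approach stays closer to the reduction-theoretic framework of \S\ref{s:red} but uses more structure than is strictly necessary for this lemma.
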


\begin{proof}
Let $\chi_w \se X_w$ be a geodesic that limits to $\mathbf{Q}$, and suppose that $e_1 \se \chi _w$.

First assume that that $e_2 \se \chi _w$. Then since $\beta _{\gamma f \Sigma _w}(e_2) \leq \beta _{\gamma f \Sigma _w}(e_1)$ we see that $s^{e_2}_{\bf Q}\geq  s^{e_1}_{\bf Q}$ as desired.

If $e_2$ is not contained in $\chi _w$, then there is some $u \in {\bf U}(K_w)$ such that $u\chi _w$ does contain $e_2$. The result follows from the above as $\beta _{\gamma f \rho _{S-w}}$ and $\beta _{\gamma f \Sigma _{w}}$ are invariant by translations of $ {\bf U}(K_w)$.
\end{proof}

Given a vertex $y \in X_w$, we let $E_y$ be the set of edges in $X_w$ that contain $y$. Then the previous lemma produces

\begin{lemma}\label{l:top}
For any vertex $y \in X_w$, and any parabolic ${\bf Q} \in {\mathcal{P}}$, either $\{s^e_{\bf Q}\}_{e \in E_y}$ contains a single value, or else $\{s^e_{\bf Q}\}_{e \in E_y}$ contains exactly two values, and the minimum value is realized by a unique edge in $E_y$.
\end{lemma}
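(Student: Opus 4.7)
The plan is to combine the preceding lemma with the geodesic structure of the tree $X_w$. The preceding lemma tells us that $s^e_{\bf Q}$ depends on $e$ only through the maximum value of $\beta_{\gamma f \Sigma_w}$ on $e$: edges with equal maxima yield equal values of $s^e_{\bf Q}$, and a strictly larger maximum yields a strictly smaller $s^e_{\bf Q}$. So the question reduces to understanding, at a fixed vertex $y$, what maxima are realized by the various $\beta_{\gamma f \Sigma_w}(e)$ for $e \in E_y$.

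Fix $\gamma f \in \Lambda_{\bf Q}$, so that $\gamma f \Sigma_w$ is a geodesic in the tree $X_w$ whose positive end $\xi$ corresponds to ${\bf Q}$, and $\beta_{\gamma f \Sigma_w}$ is the Busemann function associated to $\xi$. The crux is the uniqueness of geodesics in a tree: from the vertex $y$ there is a unique geodesic ray to $\xi$, and its first edge $e^\ast$ is the unique edge in $E_y$ whose far endpoint $y^\ast$ satisfies $\beta_{\gamma f \Sigma_w}(y^\ast) > \beta_{\gamma f \Sigma_w}(y)$. For every other $e \in E_y$, the geodesic from the far endpoint of $e$ to $\xi$ is forced to pass back through $y$, so $\beta_{\gamma f \Sigma_w}$ attains its maximum on $e$ at $y$ itself, with the common value $\beta_{\gamma f \Sigma_w}(y)$.

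Applying the preceding lemma, every edge in $E_y \setminus \{e^\ast\}$ has the same value of $s^e_{\bf Q}$, and $s^{e^\ast}_{\bf Q}$ is strictly smaller. Hence if $E_y = \{e^\ast\}$ (for instance if $y$ has degree one) the set $\{s^e_{\bf Q}\}_{e\in E_y}$ is a singleton; otherwise it contains exactly two values, and the minimum is uniquely realized by $e^\ast$. I do not anticipate a serious obstacle: the whole statement is a direct translation of the elementary fact that in a tree exactly one direction from each vertex points toward a given boundary point, combined with the monotonicity of $s^e_{\bf Q}$ in the maximum of $\beta_{\gamma f \Sigma_w}|_e$ established in the preceding lemma.
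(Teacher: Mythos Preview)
Your argument is correct and is essentially the paper's own proof, only spelled out in more detail: the paper simply observes that exactly one edge at $y$ maximizes $\beta_{\gamma f \Sigma_w}$ while all others share the common smaller maximum $\beta_{\gamma f \Sigma_w}(y)$, and then invokes the preceding lemma. One small point: the preceding lemma as stated gives only $s^{e^\ast}_{\bf Q}\leq s^e_{\bf Q}$, not strict inequality, so your phrase ``strictly smaller'' is not literally justified by it---but this is harmless, since the lemma's conclusion is phrased as a disjunction (single value \emph{or} two values with unique minimizer), and either alternative is covered once you know all $e\neq e^\ast$ share a common value and $s^{e^\ast}_{\bf Q}$ is $\leq$ that value.
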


\begin{proof}
For $\gamma f \in \Lambda _{\bf Q}$, observe that there is a unique edge containing $y$ that maximizes the Busemann function $\beta _{\gamma f \Sigma _w}$, and that the remaining edges minimize $\beta _{\gamma f \Sigma _w}$.
\end{proof}

In what follows, we'll denote the unique edge in $E_y$ from the proof of the previous lemma as $e(y,{\bf Q})$. Thus if $e , \epsilon \in E_y$, then $s^e_{\bf Q} \leq s^\epsilon_{\bf Q}$ if $e=e(y,{\bf Q})$, and $s^e_{\bf Q} = s^\epsilon_{\bf Q}$ if $e, \epsilon \neq e({y,{\bf Q}})$.

We will need one more related observation about the fibers of $\pi _w$ in the form of the following

\begin{lemma}\label{l:alt}
If there is a vertex $y \in X_w$, and a cell $\mathfrak{F} \se X_{S-w}$ such that $y \times \mathfrak{F} \se W_{S,(r_{\bf Q})}$, then $e \times \mathfrak{F} \se W_{S,(r_{\bf Q})}$
for each $e \in E_y -e(y,{\bf Q})$.
\end{lemma}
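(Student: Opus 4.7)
The plan is to translate the hypothesis $y \times \mathfrak{F} \se W_{S,(r_{\bf Q})}$ and the desired conclusion $e \times \mathfrak{F} \se W_{S,(r_{\bf Q})}$ into comparisons of Busemann values on $X_{S-w}$, analogous to how Lemma~\ref{l:fib} identifies the fiber $Z_e$ with a subcomplex of the form $W_{S-w,(s^e_{\bf Q})}$. If one defines for the vertex $y$ an analogous quantity $s^y_{{\bf Q}'}$ (using the value $\beta_{\gamma f \Sigma _w}(y)$ in place of the maximum of $\beta_{\gamma f \Sigma _w}$ over the edge, for $\gamma f \in \Lambda_{{\bf Q}'}$), then $y \times \mathfrak{F} \se W_{S,(r_{\bf Q})}$ is equivalent to the inequality $\beta_{\gamma f \rho_{S-w}}(\mathfrak{F}) \leq s^y_{{\bf Q}'}$ holding for every ${\bf Q}' \in \mathcal{P}$, while the goal $e \times \mathfrak{F} \se W_{S,(r_{\bf Q})}$ unpacks to the same inequalities with $s^y$ replaced by $s^e$.

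With this setup in hand, I would first dispatch the ${\bf Q}$-slot. Since $e \in E_y - e(y,{\bf Q})$, the vertex $y$ is the endpoint of $e$ at which $\beta_{\gamma f \Sigma_w}$ attains its maximum over $e$; this is precisely the dichotomy recorded in Lemma~\ref{l:top} and the lemma preceding it. The direct comparison of maxima therefore yields $s^e_{\bf Q} = s^y_{\bf Q}$, and so the hypothesis gives $\beta_{\gamma f \rho_{S-w}}(\mathfrak{F}) \leq s^y_{\bf Q} = s^e_{\bf Q}$, proving that $e \times \mathfrak{F}$ does not meet the horoball $B_{\mathbf{Q},S,r_{\bf Q}}$.

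For every remaining horoball $B_{\mathbf{Q}',S,r_{{\bf Q}'}}$ with ${\bf Q}' \neq {\bf Q}$, I would invoke the sufficient largeness assumption on $(r_{\bf Q})_{{\bf Q} \in \mathcal{P}}$: the horoballs are pairwise separated by a constant that may be chosen larger than the diameter of any single cell of $X_S$. Since $y \times \mathfrak{F}$ lies outside every horoball by hypothesis and $e \times \mathfrak{F}$ lies within bounded distance of $y \times \mathfrak{F}$, the cell $e \times \mathfrak{F}$ can intersect at most the single horoball closest to $y \times \mathfrak{F}$, and the previous paragraph has already ruled out $B_{\mathbf{Q},S,r_{\bf Q}}$ in the direction of $e$.

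The main obstacle I anticipate is justifying the implicit splitting of the Busemann function $\beta_{\gamma f \rho_S}$ across the product decomposition $X_S = X_w \times X_{S-w}$ cleanly enough that $s^y_{{\bf Q}'}$ is well-defined and reduces to $s^e_{{\bf Q}'}$ precisely when $y$ is the maximal endpoint of $e$. Once that bookkeeping is pinned down, the argument becomes a short consequence of Lemmas~\ref{l:fib} through~\ref{l:top} combined with the sufficient largeness condition.
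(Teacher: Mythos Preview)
Your step 2 is exactly the paper's proof, and under the intended reading of the lemma it is already the entire argument. The ${\bf Q}$ in $e(y,{\bf Q})$ is not a single distinguished parabolic but a running variable: the paper's proof begins with ``Let $\gamma f\in\Lambda_{\bf Q}$'' for arbitrary ${\bf Q}$, observes that $y$ is the maximal endpoint of $e$ for the ${\bf Q}$-Busemann function on $X_w$ whenever $e\neq e(y,{\bf Q})$, and concludes that $\beta_{\gamma f\rho_S}(e\times\mathfrak F)\le r_{\bf Q}$. That is precisely your step 2, applied once per ${\bf Q}\in\mathcal P$; nothing more is needed.

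Your step 3 is therefore unnecessary, and it is also flawed as written. The claim that sufficient largeness forces $e\times\mathfrak F$ to meet at most one horoball is fine, but nothing ties that horoball to the particular ${\bf Q}$ you fixed in step 2, so ``the previous paragraph has already ruled out $B_{{\bf Q},S,r_{\bf Q}}$'' does not dispose of it. Indeed the statement you are trying to prove in step 3---that for a \emph{fixed} ${\bf Q}_0$, every $e\neq e(y,{\bf Q}_0)$ yields $e\times\mathfrak F\subseteq W_{S,(r_{\bf Q})}$---is false: choose any ${\bf Q}_1$ with $e(y,{\bf Q}_1)\neq e(y,{\bf Q}_0)$, set $e=e(y,{\bf Q}_1)$, and pick $\mathfrak F$ so that $y\times\mathfrak F$ lies on the horosphere $Y_{{\bf Q}_1,S,r_{{\bf Q}_1}}$; then $e\times\mathfrak F$ enters $B_{{\bf Q}_1,S,r_{{\bf Q}_1}}$. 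So drop step 3 and read the lemma as asserting, for each ${\bf Q}$ separately, that $e\times\mathfrak F$ avoids the ${\bf Q}$-horoball.
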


\begin{proof}
Let $\gamma f \in \Lambda _{\bf Q}$. Since $y \times \mathfrak{F} \se W_{S,(r_{\bf Q})}$, the values of $\beta _{\gamma f \rho _S}(y \times \mathfrak{F})$ are bounded above by $r_{\bf Q}$. Since $y$ maximizes the values of $e$ under $\beta _{\gamma f \Sigma _w}$, the values of 
$\beta _{\gamma f \rho _S}(e \times \mathfrak{F})$ are bounded above by $r_{\bf Q}$ as well.  That is, $e \times \mathfrak{F} \se W_{S,(r_{\bf Q})}$.
\end{proof}

\subsection{Cover by fibers}

Having collected some information about the fibers of $\pi_w |_{W_{S,(r_{\bf Q})}}$, we will now use a collection of fibers to create a cover for $W_{S,(r_{\bf Q})}$.

For any edge $e \se X_w$, let $F_e=e \times  W_{S-w,(s^e_{\bf Q})}$ where $ W_{S-w,(s^e_{\bf Q})}$ is as in Lemma~\ref{l:fib}.

\begin{lemma}\label{l:398268}
The collection $\{F_e\}$ taken over all edges $e \se X_w$ is a cover for $W_{S,(r_{\bf Q})}$.\end{lemma}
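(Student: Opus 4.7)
The plan is to verify that every cell of $W_{S,(r_{\bf Q})}$ sits inside some $F_e$. Using the product CW structure on $X_S = X_w \times X_{S-w}$, write any cell as $c = c_w \times c'$ with $c_w$ a cell of $X_w$ and $c'$ a cell of $X_{S-w}$. If $c_w$ is the closed edge $e$, the explicit description of the fiber $\pi_w^{-1}(e^\circ) \cap W_{S,(r_{\bf Q})} = e^\circ \times Z_e$, together with the identification $Z_e = W_{S-w,(s^e_{\bf Q})}$ from Lemma~\ref{l:fib}, forces $c' \subseteq W_{S-w,(s^e_{\bf Q})}$ and hence $c \subseteq F_e$.

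If $c_w$ is a vertex $y$, the plan is to produce an edge $e \in E_y$ satisfying $e \times c' \subseteq W_{S,(r_{\bf Q})}$, which by Lemma~\ref{l:fib} is equivalent to $c' \subseteq W_{S-w,(s^e_{\bf Q})}$ and gives $c = y \times c' \subseteq e \times W_{S-w,(s^e_{\bf Q})} = F_e$. Working one parabolic at a time, Lemma~\ref{l:alt} shows that $e \times c'$ avoids $B_{{\bf Q},S,r_{\bf Q}}$ whenever $e \in E_y \setminus \{e(y,{\bf Q})\}$. The task is therefore to choose a single $e$ that simultaneously lies outside $\{e(y,{\bf Q})\}$ for every ${\bf Q}$ that could conceivably threaten to pull some $e \times c'$ into the corresponding horoball.

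The main obstacle is that $\mathcal{P}$ is infinite, so without further control the bad edges $e(y,{\bf Q})$ could in principle exhaust $E_y$. This is where the \emph{sufficiently large} hypothesis enters decisively. Any cell of the form $e \times c'$ with $e \in E_y$ has diameter bounded by a universal constant depending only on the geometry of $X_S$, so by choosing $(r_{\bf Q})_{{\bf Q} \in \mathcal{P}}$ sufficiently large we may assume that the horoballs $B_{{\bf Q},S,r_{\bf Q}}$ are pairwise separated by strictly more than twice this diameter. Since $y \times c'$ lies in $W_{S,(r_{\bf Q})}$, it avoids every horoball; a triangle inequality argument then shows that at most one horoball $B_{{\bf Q}_0, S, r_{{\bf Q}_0}}$ can be close enough to $y \times c'$ for any $e \times c'$ with $e \in E_y$ to risk entering it, and for every other ${\bf Q}$ the cell $e \times c'$ stays outside $B_{{\bf Q},S,r_{\bf Q}}$ regardless of the choice of $e$. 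It then suffices to pick any $e \in E_y \setminus \{e(y,{\bf Q}_0)\}$; such an edge exists because each vertex of the Bruhat--Tits tree $X_w$ has valence at least two.
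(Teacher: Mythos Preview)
Your proof is correct and follows essentially the same approach as the paper: split cells of $W_{S,(r_{\bf Q})}$ according to whether the $X_w$-factor is an edge or a vertex, handle the edge case via Lemma~\ref{l:fib}, and handle the vertex case via Lemma~\ref{l:alt}. The paper's proof simply cites Lemma~\ref{l:alt} to produce the required edge $e'$, whereas you make explicit the point that only one parabolic ${\bf Q}_0$ can be relevant (by the pairwise separation of horoballs encoded in ``sufficiently large'') and hence that some $e \in E_y \setminus \{e(y,{\bf Q}_0)\}$ does the job; this is a detail the paper leaves implicit in its application of Lemma~\ref{l:alt}.
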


\begin{proof}
Suppose $\sigma \times \mathfrak{F}$ is a cell in $W_{S,(r_{\bf Q})}$, where $\sigma$ is a cell in an edge $e \se X_w$ and $\mathfrak{F}$ is a cell in $ X_{S-w}$.

If $\sigma =e$, then $\sigma \times \mathfrak{F} \se F_e$ by Lemma~\ref{l:fib}. If $\sigma$ is a vertex of $e$, say $y$, then by Lemma~\ref{l:alt}, there is some $e'$ such that $y \times \mathfrak{F} \se e' \times \mathfrak{F}\se F_{e'}$.
\end{proof}

 For any vertex $y \in X_w$, let $F_y=\cup_{e \in E_y}F_e$. 
Note that there is a proper homotopy equivalence between $F_y$ and $$\cup_ {y \in e} W_{S-w,(s^e_{\bf Q})}=W_{S-w,(\max_{e \in E_y}\{s^e_{\bf Q}\})}$$ given by retracting the star of $y$ in $X_w$ to the point $y$. 

 Further, if $e \in E_y$, then the inclusion $F_{e} \rightarrow F_y$, after proper homotopy equivalence, is the inclusion $ W_{S-w,(s^{e}_{\bf Q})} \rightarrow W_{S-w,(\max_{\epsilon \in E_y}\{s^\epsilon_{\bf Q}\})}$. In particular, if $e \in E_y$, the we can, and we shall, identify the map induced by inclusion $$\rho _{y,e}:H^{|S|-1}_c(F_y) \rightarrow H^{|S|-1}_c(F_e)$$ with the map $$\rho _{y,e}:H^{|S|-1}_c(W_{S-w,(\max_{\epsilon \in E_y}\{s^\epsilon_{\bf Q}\})}) \rightarrow H^{|S|-1}_c( W_{S-w,(s^{e}_{\bf Q})} )$$

\subsection{Maps between the cohomology of the fibers}

For  an edge $e \se X_w$, and a parabolic group ${\bf R} \in \mathcal{P}$, we let $\mathcal{S}_{e,{\bf R}} \se W_{S-w,(s^{e}_{\bf Q})} $ be the complex comprised of all cells $\mathcal{F} \se W_{S-w,(s^{e}_{\bf Q})} $ such that there is a cell $\mathcal{G} \se X_{S-w}$ containing $\mathcal{F}$ with $\beta _{\gamma f \rho_{(S-w)}}(\mathcal{G}) \nleq s^{e}_{\bf R}$ where $\gamma f \in \Lambda _{\bf R}$. 
Thus we may informally think of the boundary of 
$W_{S-w,(s^{e}_{\bf Q})} $ as $\amalg _{{\bf Q} \in \mathcal{P}} \mathcal{S}_{e,{\bf Q}}$.

Let $y \in X_w$ be a vertex, $e \in E_y$, and ${\bf R} \in {\mathcal{P}}$. We define $\mathcal{J}_{y,e,{\bf R}}$ to be the union of cells ${\mathcal{F}} \se  W_{S-w,(\max_{\epsilon \in E_y}\{s^\epsilon_{\bf Q}\})}$ such that  the maximum value of $\beta _{\gamma f \rho _{(S-w)}}(\mathcal{F})$ is greater than $s^e_{\bf R}$ for $\gamma f \in \Lambda _{\bf R}$. Notice that if  $\mathcal{J}_{y,e,{\bf R}}\neq \emptyset$, then $s^e_{\bf R}<s^\epsilon_{\bf R}$ for some $\epsilon \in E_y$, which, by Lemma~\ref{l:top}, implies that $e=e(y,{\bf R})$.

\begin{lemma}\label{l:Jtriv}
If $y \in X_w$, $e \in E_y$, and ${\bf R} \in {\mathcal{P}}$, then $H^{|S|-1}_c(\mathcal{J}_{y,e,{\bf R}})=0$.
\end{lemma}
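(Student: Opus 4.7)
The plan is to show that $\mathcal{J}_{y,e,{\bf R}}$ properly deformation retracts onto a subcomplex of dimension at most $|S|-2$; since compactly supported cohomology vanishes above the dimension of a cell complex and is a proper homotopy invariant, this forces $H^{|S|-1}_c(\mathcal{J}_{y,e,{\bf R}})=0$.

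First, if $\mathcal{J}_{y,e,{\bf R}}$ is empty there is nothing to prove, so assume otherwise. As observed right after the definition, this forces $e = e(y,{\bf R})$, and hence by Lemma~\ref{l:top} the set $\{s^{\epsilon}_{\bf R}\}_{\epsilon \in E_y}$ takes exactly two values, with $\max_{\epsilon \in E_y} s^{\epsilon}_{\bf R} = s^{\epsilon_0}_{\bf R} > s^{e}_{\bf R}$ for any $\epsilon_0 \in E_y - e$. Fix $\gamma f \in \Lambda_{\bf R}$ and write $\beta = \beta_{\gamma f \rho_{(S-w)}}$. Every cell $\mathcal{F}$ of $\mathcal{J}_{y,e,{\bf R}}$ then lies in $W_{S-w,\max}$ and satisfies $\max \beta(\mathcal{F}) \in (s^e_{\bf R}, s^{\epsilon_0}_{\bf R}]$, so $\mathcal{J}_{y,e,{\bf R}}$ is a bounded horoshell in the ${\bf R}$-horoball, intersected with $W_{S-w,\max}$.

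To construct the retract I would exploit the product decomposition $B_{{\bf R}, S-w, r} = U_{\bf R} M_{\bf R} \Sigma^+_{S-w, r}$ proved in Section~\ref{s:red}: the apartment direction transverse to the horospheres $\Sigma_{S-w, r}$ provides a proper, $U_{\bf R} M_{\bf R}$-equivariant flow along which $\beta$ strictly decreases. Flowing each cell of $\mathcal{J}_{y,e,{\bf R}}$ downward until its maximum $\beta$-value reaches $s^e_{\bf R}$ retracts the horoshell onto a subcomplex of the regular horosphere $Y_{{\bf R}, S-w, s^e_{\bf R}} = U_{\bf R} M_{\bf R} \Sigma_{S-w, s^e_{\bf R}}$, which is codimension $1$ in the $(|S|-1)$-dimensional space $X_{S-w}$ and hence of dimension $|S|-2$. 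This argument may also be phrased via descending links with respect to $\beta$, mirroring the Morse-theoretic argument used earlier to prove $W_{S,(r_{\bf Q})} \simeq X_{S,(r_{\bf Q})}$, with the chamber maximum replaced by the $\beta$-maximum on each cell.

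The main obstacle is showing the flow stays inside $W_{S-w,\max}$, i.e.\ that decreasing $\beta$ never pushes a cell across the boundary of some other horoball $B_{{\bf Q}, S-w, s^{\bullet}_{\bf Q}}$ with ${\bf Q} \neq {\bf R}$. Here the sufficiently-large hypothesis on $(r_{\bf Q})$ does the work: by Lemma~\ref{l:fib} the parameters $s^{\bullet}_{\bf Q}$ can be taken as large as desired, and part (ii) of Proposition~\ref{p:prune2} (applied inside $X_{S-w}$) then makes the bounded horoshell containing $\mathcal{J}_{y,e,{\bf R}}$ arbitrarily far from every $B_{{\bf Q}, S-w, s^{\bullet}_{\bf Q}}$ with ${\bf Q}\neq {\bf R}$. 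With this separation in hand, the flow meets no competing horoball and lands in $W_{S-w,\max}$, producing a proper deformation retract of $\mathcal{J}_{y,e,{\bf R}}$ onto a $(|S|-2)$-dimensional subcomplex and finishing the proof.
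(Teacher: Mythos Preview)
Your overall strategy --- retracting $\mathcal{J}_{y,e,\mathbf{R}}$ onto something of dimension at most $|S|-2$ via the Busemann function --- is the same as the paper's. The gap is the direction of the retraction. In a product of trees the flow toward the boundary point $\mathbf{R}$ (increasing $\beta$) is well defined because each point has a unique geodesic ray to $\mathbf{R}$; but there is no flow in the opposite direction, since the trees branch away from $\mathbf{R}$. Concretely, already when $|S|-1=1$ the components of $\mathcal{J}_{y,e,\mathbf{R}}$ are finite subtrees indexed by vertices on the \emph{upper} horosphere, while the lower boundary has strictly more vertices, so no deformation retraction onto the lower horosphere can exist. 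The $U_{\mathbf R}M_{\mathbf R}$-equivariant downward flow you invoke lives only on a single apartment, not on the building: distinct elements of $U_{\mathbf R}$ can send a given apartment point to the same building point, so the extension is not well defined.

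The paper runs the Morse argument the other way: it picks a top-dimensional cell $\mathcal{F}\subseteq\mathcal{J}_{y,e,\mathbf{R}}$ with \emph{minimal} $\beta$-value and retracts $\mathcal{F}$ into $\partial\mathcal{F}$ along the geodesic $\gamma f\rho_{(S-w)}$, i.e.\ pushes toward $\mathbf{R}$ rather than away. The minimum vertex of such an $\mathcal{F}$ is free --- any other top cell containing it would have strictly smaller $\max\beta$ and hence lie outside $\mathcal{J}_{y,e,\mathbf{R}}$ --- so this collapse is legitimate, and iterating removes all $(|S|-1)$-cells. With this orientation the process stays inside the $\mathbf{R}$-horoshell automatically, so the disjointness-from-other-horoballs argument in your final paragraph is not needed.
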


\begin{proof}
We may assume that $\mathcal{J}_{y,e,{\bf R}}\neq \emptyset$, so that $s^e_{\bf R}<s^\epsilon_{\bf R}$ for $\epsilon \in E_y - e$. Then $\mathcal{J}_{y,e,{\bf R}}$ is the complex of cells ${\mathcal{F}}$ such that the maximum value of $\beta _{\gamma f \rho _{(S-w)}}(\mathcal{F})$ is greater than $s^e_{\bf R}$ but bounded above by $s^\epsilon_{\bf R}$.

Let ${\mathcal{F}}$ be a cell as in the above paragraph of dimension $|S|-1$ and assume that $\beta _{\gamma f \rho _{(S-w)}}(\mathcal{F})$ attains the minimal value for all such $\mathcal{F}$. Then we can retract $\mathcal{F}$ into $\partial \mathcal{F}$ along the direction of the geodesic $\gamma f \rho _{(S-w)}$. Repeat this process until $\mathcal{J}_{y,e,{\bf R}}$ is retracted onto a complex of dimension $|S|-2$.
\end{proof}

We let $K_{y,e} \se \mathcal{P}$ be the set of all ${\bf R} \in {\mathcal{P}}$ such that $\mathcal{J}_{y,e,{\bf R}}\neq \emptyset$. If $e,\epsilon \in E_y$, and  if $\mathcal{J}_{y,e,{\bf R}} $ and $\mathcal{J}_{y,\epsilon,{\bf R}}$ are each nonempty, then  $e=e(y,{\bf R})=\epsilon$. Therefore, if $e$ and $\epsilon$ are distinct, we have $K_{y,e}\cap K_{y,\epsilon}=\emptyset$ so that if we let $K_y=\cup _{e \in E_y}K_{y,e} \se \mathcal{P}$, then $$K_y=\amalg _{e \in E_y}K_{y,e}$$

\begin{lemma}\label{l:mvs}
Given a vertex $y \in X_w$ and $e \in E_y$,  $$W_{S-w,(\max_{\epsilon \in E_y}\{s^\epsilon_{\bf Q}\})}=W_{S-w,(s^{e}_{\bf Q})} \cup \Big( \amalg _{{\bf R} \in K_{y,e}} \mathcal{J}_{y,e(y,{\bf R}),{\bf R}}\Big)$$ Furthermore $$W_{S-w,(s^{e}_{\bf Q})} \cap \Big( \amalg _{{\bf R} \in K_{y,e}} \mathcal{J}_{y,e(y,{\bf R}),{\bf R}}\Big) = \amalg _{{\bf R} \in K_{y,e}} \mathcal{S}_{e(y,{\bf R}),{\bf R}}$$
\end{lemma}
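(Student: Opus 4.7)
My plan is to establish both equalities by direct examination of the definitions, using Lemma~\ref{l:top} for the first identity and the sufficiently-large hypothesis on $(r_{\bf Q})$ for the disjointness claims.

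For the first equality, write $m_{\bf Q} = \max_{\epsilon \in E_y}\{s^\epsilon_{\bf Q}\}$. The inclusion $\supseteq$ is immediate, since $s^e_{\bf Q} \leq m_{\bf Q}$ for every ${\bf Q}$ and each $\mathcal{J}_{y,e,{\bf R}}$ is a subcomplex of $W_{S-w,(m_{\bf Q})}$ by construction. For the reverse inclusion, given a cell $\mathcal{F}$ of $W_{S-w,(m_{\bf Q})}$ not contained in $W_{S-w,(s^e_{\bf Q})}$, some parabolic ${\bf R}$ must satisfy $s^e_{\bf R} < \max \beta_{\gamma f \rho_{(S-w)}}(\mathcal{F}) \leq m_{\bf R}$; the strict inequality $s^e_{\bf R} < m_{\bf R}$ forces $e = e(y,{\bf R})$ by Lemma~\ref{l:top}, so ${\bf R} \in K_{y,e}$ and $\mathcal{F} \in \mathcal{J}_{y,e(y,{\bf R}),{\bf R}}$.

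The step I expect to be the main obstacle is disjointness of the coproducts. If a cell $\mathcal{F}$ were to lie in both $\mathcal{J}_{y,e,{\bf R}}$ and $\mathcal{J}_{y,e,{\bf R}'}$ for distinct ${\bf R}, {\bf R}' \in K_{y,e}$, then using the relation between $s^e_{\bf Q}$ and $r_{\bf Q}$ arising from the proof of Lemma~\ref{l:fib} (where $s^e_{\bf Q} = r_{\bf Q} - \beta_{\gamma f \Sigma_w}(x_w)$ with $x_w$ the endpoint of $e$ maximizing the relevant Busemann function), the product cell $e \times \mathcal{F} \subseteq X_S$ would meet both horoballs $B_{{\bf R},S,r_{\bf R}}$ and $B_{{\bf R}',S,r_{\bf R}'}$. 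Since cells of the product of trees $X_S$ have uniformly bounded diameter while the sufficiently-large hypothesis places these horoballs at arbitrarily large pairwise distance, this is impossible.

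For the second equality, I would unpack the definitions. A cell $\mathcal{F}$ in $W_{S-w,(s^e_{\bf Q})} \cap \mathcal{J}_{y,e,{\bf R}}$ must satisfy $\max \beta_{\gamma f \rho_{(S-w)}}(\mathcal{F}) \leq s^e_{\bf R}$ by membership in $W_{S-w,(s^e_{\bf Q})}$, yet be a face of some cell $\mathcal{G}$ with $\max \beta_{\gamma f \rho_{(S-w)}}(\mathcal{G}) > s^e_{\bf R}$ by virtue of belonging to the subcomplex $\mathcal{J}_{y,e,{\bf R}}$. These are exactly the conditions defining $\mathcal{S}_{e,{\bf R}}$, and since ${\bf R} \in K_{y,e}$ gives $e(y,{\bf R}) = e$, we obtain $\mathcal{S}_{e(y,{\bf R}),{\bf R}}$. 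Disjointness of the resulting coproduct is inherited from the disjointness of the $\mathcal{J}$'s established above.
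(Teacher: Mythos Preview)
Your proof is correct and follows essentially the same approach as the paper's. The only difference is in the disjointness argument: you lift to $X_S$ and invoke the sufficiently-large hypothesis on $(r_{\bf Q})$, whereas the paper works directly in $X_{S-w}$ using that $(s^e_{\bf Q})$ is itself sufficiently large (as guaranteed by Lemma~\ref{l:fib}); both routes yield the same contradiction via bounded cell diameter.
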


\begin{proof}
By definition, for all ${\bf R} \in \mathcal{P}$, we have that $\mathcal{J}_{y,e,{\bf R}} $ and  $ W_{S-w,(s^{e}_{\bf Q})}$ are contained in $  W_{S-w,(\max_{\epsilon \in E_y}\{s^\epsilon_{\bf Q}\})}$.

If $\mathcal{F} \se   W_{S-w,(\max_{\epsilon \in E_y}\{s^\epsilon_{\bf Q}\})}$ is a cell, and if $\mathcal{F}$ is not contained in $W_{S-w,(s^{e}_{\bf Q})}$, then the maximum value of $\beta _{\gamma f \rho _{(S-w)}}(\mathcal{F})$ is greater than $s^e_{\bf R}$ for some ${\bf R}\in \mathcal{P}$ and $\gamma f \in \Lambda _{\bf R}$, so that $\mathcal{F} \se \mathcal{J}_{y,e,{\bf R}} $ which is to say that 
$$W_{S-w,(\max_{\epsilon \in E_y}\{s^\epsilon_{\bf Q}\})} \se W_{S-w,(s^{e}_{\bf Q})} \cup \bigcup_{{\bf R}\in \mathcal{P}}  \mathcal{J}_{y,e,{\bf R}}$$
 so we have equality. Furthermore, by the definition of $K_{y,e}$, and since $(s^{e}_{\bf Q})_{{\bf Q }\in \mathcal{P}}$ is sufficiently large, we have
$$W_{S-w,(\max_{\epsilon \in E_y}\{s^\epsilon_{\bf Q}\})} = W_{S-w,(s^{e}_{\bf Q})} \cup \amalg_{{\bf R}\in K_{y,e}}  \mathcal{J}_{y,e,{\bf R}}$$

Now suppose that there is a cell $\mathcal{F}$ contained in both $W_{S-w,(s^{e}_{\bf Q})}$ and $  \mathcal{J}_{y,e,{\bf R}}$ for some ${\bf R}\in K_{y,e}$. The latter inclusion implies that there is some $\mathcal{G} \se X_{S-w}$ such that the maximum value of $\beta _{\gamma f \rho _{(S-w)}}(\mathcal{G})$ is greater than $s^e_{\bf R}$ for $\gamma f \in \Lambda _{\bf R}$. That is, $\mathcal{F} \se \mathcal{S}_{e,{\bf R}}$.

To show the other inclusion, let $\mathcal{F} \se W_{S-w,(s^{e}_{\bf Q})} $ be such that there is a cell $\mathcal{G} \se X_{S-w}$ containing $\mathcal{F}$ with $\beta _{\gamma f \rho_{(S-w)}}(\mathcal{G}) \nleq s^{e}_{\bf R}$ for some ${\bf R} \in K_{y,e}$ where $\gamma f \in \Lambda _{\bf R}$. Then $\mathcal{F}\se   \mathcal{J}_{y,e,{\bf R}}$.
\end{proof}

We also have the following lemma whose proof is similar.

\begin{lemma}\label{l:mvs2}
Given a vertex $y \in X_w$,  $$W_{S-w,(\max_{\epsilon \in E_y}\{s^\epsilon_{\bf Q}\})}=W_{S-w,(s^{e(y,{\bf Q})}_{\bf Q})} \cup \Big( \amalg _{{\bf R} \in K_{y}} \mathcal{J}_{y,e(y,{\bf R}),{\bf R}}\Big)$$ Furthermore $$W_{S-w,(s^{e(y,{\bf Q})}_{\bf Q})} \cap \Big( \amalg _{{\bf R} \in K_{y}} \mathcal{J}_{y,e(y,{\bf R}),{\bf R}}\Big) = \amalg _{{\bf R} \in K_{y}} \mathcal{S}_{e(y,{\bf R}),{\bf R}}$$
\end{lemma}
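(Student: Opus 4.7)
The plan is to mimic the proof of Lemma~\ref{l:mvs}, with the fixed edge $e$ replaced by the parabolic-dependent choice $e(y,{\bf Q})$. The key numerical input is Lemma~\ref{l:top}, which identifies the tuple $(s^{e(y,{\bf Q})}_{\bf Q})_{{\bf Q} \in \mathcal{P}}$ as the componentwise minimum of $(s^\epsilon_{\bf Q})_{{\bf Q} \in \mathcal{P}}$ as $\epsilon$ ranges over $E_y$. This gives $W_{S-w,(s^{e(y,{\bf Q})}_{\bf Q})} \subseteq W_{S-w,(s^\epsilon_{\bf Q})} \subseteq W_{S-w,(\max_\epsilon s^\epsilon_{\bf Q})}$ for every $\epsilon \in E_y$.

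First I would establish the union decomposition. The $\supseteq$ inclusion follows from the above inclusions and the definition of each $\mathcal{J}_{y,e,{\bf R}}$. For the reverse inclusion, a cell $\mathcal{F}$ in $W_{S-w,(\max_\epsilon s^\epsilon_{\bf Q})}$ that does not lie in $W_{S-w,(s^{e(y,{\bf Q})}_{\bf Q})}$ has $\max \beta_{\gamma f \rho_{(S-w)}}(\mathcal{F}) > s^{e(y,{\bf R})}_{\bf R}$ for some ${\bf R}$ and $\gamma f \in \Lambda_{\bf R}$. Since the same maximum is bounded by $\max_\epsilon s^\epsilon_{\bf R}$, we have $s^{e(y,{\bf R})}_{\bf R} < \max_\epsilon s^\epsilon_{\bf R}$, placing ${\bf R}$ in $K_{y, e(y,{\bf R})} \subseteq K_y$ and $\mathcal{F}$ in $\mathcal{J}_{y,e(y,{\bf R}),{\bf R}}$. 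Disjointness of the $\mathcal{J}$'s across distinct ${\bf R} \in K_y$ follows, as in Lemma~\ref{l:mvs}, from the sufficiently large hypothesis: a cell in $\mathcal{J}_{y,e(y,{\bf R}),{\bf R}}$ sits in a bounded neighborhood of the ${\bf R}$-horoball, and these horoballs are pairwise arbitrarily far apart.

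Next I would address the intersection formula. The $\subseteq$ direction is routine: a point $p$ in both $W_{S-w,(s^{e(y,{\bf Q})}_{\bf Q})}$ and $\mathcal{J}_{y,e(y,{\bf R}),{\bf R}}$ has its carrier cell contained in $W_{S-w,(s^{e(y,{\bf Q})}_{\bf Q})} \subseteq W_{S-w,(s^{e(y,{\bf R})}_{\bf Q})}$ and lies in the closure of some cell $\mathcal{G}$ with $\beta_{\gamma f \rho_{(S-w)}}(\mathcal{G}) \nleq s^{e(y,{\bf R})}_{\bf R}$, which is exactly the defining condition of $\mathcal{S}_{e(y,{\bf R}),{\bf R}}$.

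The hard part will be the reverse inclusion, because cells in $\mathcal{S}_{e(y,{\bf R}),{\bf R}}$ are a priori only known to lie in $W_{S-w,(s^{e(y,{\bf R})}_{\bf Q})}$, whose bounds at ${\bf Q}' \ne {\bf R}$ are the larger maxima $\max_\epsilon s^\epsilon_{{\bf Q}'}$ rather than the minima $s^{e(y,{\bf Q}')}_{{\bf Q}'}$. The resolution is a geometric localization: such a cell $\mathcal{F}$ is a face of a cell $\mathcal{G}$ with $\beta_{\gamma f \rho_{(S-w)}}(\mathcal{G}) > s^{e(y,{\bf R})}_{\bf R}$, so both $\mathcal{F}$ and $\mathcal{G}$ sit in a uniformly bounded neighborhood of the horoball for ${\bf R}$. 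By the sufficiently large hypothesis on $(r_{\bf Q})$, and hence on each $(s^\epsilon_{\bf Q})$ via Lemma~\ref{l:fib}, the Busemann values of $\mathcal{F}$ and $\mathcal{G}$ with respect to any parabolic ${\bf Q}' \ne {\bf R}$ are forced well below $s^{e(y,{\bf Q}')}_{{\bf Q}'}$. Consequently $\mathcal{F} \subseteq W_{S-w,(s^{e(y,{\bf Q})}_{\bf Q})}$ and $\mathcal{G} \subseteq W_{S-w,(\max_\epsilon s^\epsilon_{\bf Q})}$, which puts $\mathcal{F}$ into $\mathcal{J}_{y,e(y,{\bf R}),{\bf R}}$ via the super-cell $\mathcal{G}$ and finishes the argument.
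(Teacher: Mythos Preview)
Your proposal is correct and follows the same approach as the paper, which simply asserts that the proof is similar to that of Lemma~\ref{l:mvs} and gives no further detail. You in fact go beyond the paper by isolating the one point where the argument is genuinely more delicate than in Lemma~\ref{l:mvs}: for the reverse inclusion in the intersection formula, a cell of $\mathcal{S}_{e(y,{\bf R}),{\bf R}}$ is by definition only contained in $W_{S-w,(s^{e(y,{\bf R})}_{\bf Q})}$, and one must still check containment in the smaller $W_{S-w,(s^{e(y,{\bf Q})}_{\bf Q})}$. Your localization argument via the ``sufficiently large'' hypothesis is the right resolution; to make it airtight you should observe that for ${\bf Q}'\neq {\bf R}$ the gap $\max_{\epsilon\in E_y} s^{\epsilon}_{{\bf Q}'} - s^{e(y,{\bf Q}')}_{{\bf Q}'}$ is uniformly bounded (it corresponds to a single edge in $X_w$), so the separation guaranteed by Lemma~\ref{l:fib} for any fixed tuple $(s^{\epsilon}_{\bf Q})_{\bf Q}$ absorbs this bounded discrepancy.
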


 The Mayer-Vietoris sequence for the pair in Lemma~\ref{l:mvs} yields the  coboundary homomorphism 
$$\delta _{y,e} : \oplus _{{\bf R}\in K_{y,e}} H_c^{|S|-2}( \mathcal{S}_{e(y,{\bf R}),{\bf R}}) \rightarrow H_c^{|S|-1}(W_{S-w,(\max_{\epsilon \in E_y}\{s^\epsilon_{\bf Q}\})})$$

Similarly, Lemma~\ref{l:mvs2} yields
$$\delta _{y} : \oplus _{{\bf R} \in K_{y}} H_c^{|S|-2}( \mathcal{S}_{e(y,{\bf R}),{\bf R}}) \rightarrow H_c^{|S|-1}(W_{S-w,(\max_{\epsilon \in E_y}\{s^\epsilon_{\bf Q}\})})$$
  so that $\delta_y = \oplus _{e \in E_y} \delta_{y,e}$.

\begin{lemma}
Suppose that $H^{|S|-2}_c(W_{S-w,( r_{\bf Q})} )=0$ for any sufficiently large sequence $( r_{\bf Q})_{{\bf Q}\in \mathcal{P}}$.
If $\sum  _{{\bf R}\in {K_y}} v_{\bf R} \in \oplus _{{\bf R} \in K_{y}} H_c^{|S|-2}( \mathcal{S}_{e(y,{\bf R}),{\bf R}}) $ and $\delta _y (\sum v_{\bf R})=0$, then $\delta _y ( v_{\bf R})=0$ for all ${{\bf R}\in {K_y}}$.
\end{lemma}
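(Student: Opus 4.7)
The plan is to apply Mayer-Vietoris twice---once globally via Lemma~\ref{l:mvs2} and once for each edge $e \in E_y$ via Lemma~\ref{l:mvs}---and to use the standing hypothesis to kill the first summand of each Mayer-Vietoris pair so that the kernels of $\delta_y$ and of each $\delta_{y,e}$ are controlled by the cohomology of the disjoint $\mathcal{J}$-pieces alone.

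First I would write out the Mayer-Vietoris sequence associated to the cover of Lemma~\ref{l:mvs2}:
\[
H_c^{|S|-2}(W_{S-w,(s^{e(y,{\bf Q})}_{\bf Q})}) \oplus \bigoplus_{{\bf R} \in K_y} H_c^{|S|-2}(\mathcal{J}_{y,e(y,{\bf R}),{\bf R}}) \to \bigoplus_{{\bf R} \in K_y} H_c^{|S|-2}(\mathcal{S}_{e(y,{\bf R}),{\bf R}}) \xrightarrow{\delta_y} H_c^{|S|-1}(W_{S-w,(\max_{\epsilon}\{s^\epsilon_{\bf Q}\})}).
\]
The disjointness of the $\mathcal{J}$-pieces produces the direct-sum decomposition in the middle term. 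Since $(s^{e(y,{\bf Q})}_{\bf Q})_{{\bf Q} \in \mathcal{P}}$ is sufficiently large, the hypothesis gives $H_c^{|S|-2}(W_{S-w,(s^{e(y,{\bf Q})}_{\bf Q})}) = 0$. Consequently $\ker \delta_y$ equals the image of the restriction map, which splits as the direct sum of the individual restrictions $H_c^{|S|-2}(\mathcal{J}_{y,e(y,{\bf R}),{\bf R}}) \to H_c^{|S|-2}(\mathcal{S}_{e(y,{\bf R}),{\bf R}})$. So from $\delta_y(\sum v_{\bf R}) = 0$ I obtain classes $w_{\bf R} \in H_c^{|S|-2}(\mathcal{J}_{y,e(y,{\bf R}),{\bf R}})$ restricting to $v_{\bf R}$, one for each ${\bf R} \in K_y$.

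Second, I would fix an edge $e \in E_y$ and a parabolic ${\bf R} \in K_{y,e}$, so that $e(y,{\bf R}) = e$ and $\mathcal{J}_{y,e(y,{\bf R}),{\bf R}} = \mathcal{J}_{y,e,{\bf R}}$. Running Mayer-Vietoris on the cover of Lemma~\ref{l:mvs} gives
\[
H_c^{|S|-2}(W_{S-w,(s^e_{\bf Q})}) \oplus \bigoplus_{{\bf R}' \in K_{y,e}} H_c^{|S|-2}(\mathcal{J}_{y,e,{\bf R}'}) \to \bigoplus_{{\bf R}' \in K_{y,e}} H_c^{|S|-2}(\mathcal{S}_{e,{\bf R}'}) \xrightarrow{\delta_{y,e}} H_c^{|S|-1}(\cdot),
\]
and the hypothesis again annihilates the leftmost term because $(s^e_{\bf Q})_{{\bf Q} \in \mathcal{P}}$ is sufficiently large by Lemma~\ref{l:fib}. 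Viewing the single class $w_{\bf R}$ from the previous step as an element of $\bigoplus_{{\bf R}' \in K_{y,e}} H_c^{|S|-2}(\mathcal{J}_{y,e,{\bf R}'})$ concentrated in the ${\bf R}$-coordinate, its restriction is $v_{\bf R}$ concentrated in the ${\bf R}$-coordinate of $\bigoplus_{{\bf R}' \in K_{y,e}} H_c^{|S|-2}(\mathcal{S}_{e,{\bf R}'})$. Hence $v_{\bf R} \in \ker \delta_{y,e}$.

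Finally, I would invoke the already-noted identity $\delta_y = \bigoplus_{e \in E_y} \delta_{y,e}$, which together with the disjoint decomposition $K_y = \amalg_{e \in E_y} K_{y,e}$ gives $\delta_y(v_{\bf R}) = \delta_{y,e}(v_{\bf R}) = 0$ for each ${\bf R} \in K_{y,e}$, completing the proof. The main obstacle is really a bookkeeping one: making sure that the element $w_{\bf R}$ produced from the \emph{global} Mayer-Vietoris lives naturally in the domain of the \emph{local} Mayer-Vietoris (for the correct $e = e(y,{\bf R})$), and that extending it by zero in the other $K_{y,e}$-coordinates produces an honest lift of the single class $v_{\bf R}$. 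This succeeds precisely because the $\mathcal{J}$-summands are disjoint, so both sequences decompose as direct sums indexed by $K_y$ and $K_{y,e}$ respectively, and no cross terms arise.
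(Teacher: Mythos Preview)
Your proof is correct and follows essentially the same route as the paper: apply Mayer--Vietoris to the cover of Lemma~\ref{l:mvs2}, use the hypothesis to kill the $W$-summand so that $\ker\delta_y$ is exactly the image of $\bigoplus_{{\bf R}\in K_y} H_c^{|S|-2}(\mathcal{J}_{y,e(y,{\bf R}),{\bf R}})$, and then observe that this image respects the ${\bf R}$-decomposition. The paper simply stops after your first step, since exactness of that same global sequence already gives $\delta_y(v_{\bf R})=0$ once $v_{\bf R}$ lifts to $H_c^{|S|-2}(\mathcal{J}_{y,e(y,{\bf R}),{\bf R}})$; your second pass through the local Mayer--Vietoris of Lemma~\ref{l:mvs} is correct but unnecessary.
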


\begin{proof} Since $H^{|S|-2}_c(W_{S-w,(\max_{\epsilon \in E_y} s^\epsilon_{\bf Q})})$  and $H^{|S|-2}_c(W_{S-w,(s^{e(y,{\bf Q})}_{\bf Q})})$ are trivial by assumption,
we have the following portion of the Mayer-Vietoris sequence for the pair from Lemma~\ref{l:mvs2}.
$$0 \rightarrow   \oplus  H_c^{|S|-2}(\mathcal{J}_{y,e(y,{\bf R}),{\bf R}})    \rightarrow    \oplus H_c^{|S|-2}( \mathcal{S}_{e(y,{\bf R}),{\bf R}}) \rightarrow H_c^{|S|-1}(W_{S-w,(\max_{\epsilon \in E_y}\{s^\epsilon_{\bf Q}\})})  $$
where $\delta _y$ is the rightmost map on the line above.
Thus, if $\delta _y (\sum v_{\bf R})=0$, then $\sum v_{\bf R} \in  \oplus  H_c^{|S|-2}(\mathcal{J}_{y,e(y,{\bf R}),{\bf R}})$, and in particular, for each $\bf R$ we have $v_{\bf R} \in   H_c^{|S|-2}(\mathcal{J}_{y,e(y,{\bf R}),{\bf R}})$ so that $\delta_y(v_{\bf R})=0$ for each $\bf R$.
\end{proof}

\begin{lemma}\label{l:nUFLQHO23}
Suppose that $H^{|S|-2}_c(W_{S-w,( r_{\bf Q})} )=0$ for any sufficiently large sequence $( r_{\bf Q})_{{\bf Q}\in \mathcal{P}}$. Let $y \in X_w$ be a vertex, and suppose $x \in H_c^{|S|-1}(F_y)$ is nonzero. Then there is at most one $e \in E_y$ such that $\rho_{y,e}(x)=0$.
\end{lemma}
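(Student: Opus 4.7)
The plan is to argue by contradiction: assume there exist two distinct edges $e_1, e_2 \in E_y$ with $\rho_{y,e_1}(x) = \rho_{y,e_2}(x) = 0$, and deduce that $x = 0$. The key idea is to produce Mayer--Vietoris preimages of $x$ under each of $\delta_{y,e_1}$ and $\delta_{y,e_2}$, reinterpret them inside the larger direct sum indexed by $K_y$, and then exploit the preceding lemma together with the disjointness $K_{y,e_1} \cap K_{y,e_2} = \emptyset$.

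First, I would continue one step further the Mayer--Vietoris sequence associated to Lemma~\ref{l:mvs}, applied for each $e \in \{e_1,e_2\}$ in turn. Since $H_c^{|S|-1}(\amalg_{\bf R} \mathcal{J}_{y,e(y,{\bf R}),{\bf R}}) = 0$ by Lemma~\ref{l:Jtriv}, the relevant portion of the sequence reads
\[
\bigoplus_{{\bf R} \in K_{y,e_i}} H_c^{|S|-2}(\mathcal{S}_{e(y,{\bf R}),{\bf R}}) \xrightarrow{\delta_{y,e_i}} H_c^{|S|-1}(F_y) \xrightarrow{\rho_{y,e_i}} H_c^{|S|-1}(F_{e_i})
\]
and is exact at the middle term. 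Hence $\rho_{y,e_i}(x) = 0$ produces elements $v \in \bigoplus_{{\bf R} \in K_{y,e_1}} H_c^{|S|-2}(\mathcal{S}_{e(y,{\bf R}),{\bf R}})$ and $w \in \bigoplus_{{\bf R} \in K_{y,e_2}} H_c^{|S|-2}(\mathcal{S}_{e(y,{\bf R}),{\bf R}})$ with $\delta_{y,e_1}(v) = x = \delta_{y,e_2}(w)$.

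Next, I would view $v$ and $w$ as elements of the ambient direct sum $\bigoplus_{{\bf R} \in K_y} H_c^{|S|-2}(\mathcal{S}_{e(y,{\bf R}),{\bf R}})$, which is legitimate because $K_y = \amalg_{e \in E_y} K_{y,e}$. Since $v$ is supported on $K_{y,e_1}$ and $w$ on the disjoint set $K_{y,e_2}$, the identity $\delta_y = \oplus_{e \in E_y} \delta_{y,e}$ from Lemma~\ref{l:mvs2} gives
\[
\delta_y(v - w) = \delta_{y,e_1}(v) - \delta_{y,e_2}(w) = x - x = 0.
\]

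Finally, the preceding lemma (which uses the standing hypothesis that $H_c^{|S|-2}$ of every sufficiently large $W_{S-w,(r_{\bf Q})}$ vanishes) then forces each component $(v-w)_{\bf R}$ of $v-w$ to satisfy $\delta_y((v-w)_{\bf R}) = 0$. For ${\bf R} \in K_{y,e_1}$, disjointness gives $(v-w)_{\bf R} = v_{\bf R}$, and $\delta_y(v_{\bf R}) = \delta_{y,e_1}(v_{\bf R})$. Summing over ${\bf R} \in K_{y,e_1}$ yields $x = \delta_{y,e_1}(v) = \sum_{\bf R} \delta_{y,e_1}(v_{\bf R}) = 0$, contradicting the assumption that $x$ is nonzero.

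The main obstacle is bookkeeping: the lifts of $x$ are furnished by two different Mayer--Vietoris decompositions (one for each $e_i$), and to combine them one must pass to the common ambient direct sum coming from Lemma~\ref{l:mvs2} and invoke the compatibility $\delta_y = \oplus_e \delta_{y,e}$. Once this is set up, the splitting provided by the preceding lemma does the rest, so no new topological or homological input beyond what the paper has already assembled is required.
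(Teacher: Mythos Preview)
Your proof is correct and follows essentially the same route as the paper: lift $x$ through the Mayer--Vietoris connecting maps $\delta_{y,e_i}$ (using Lemma~\ref{l:Jtriv} for exactness), pass to the ambient sum over $K_y$ via $\delta_y=\oplus_e\delta_{y,e}$, and invoke the preceding lemma together with the disjointness of the $K_{y,e}$. The only cosmetic difference is the endgame: the paper picks a single $\mathbf{R}$ with $\delta_{y,e}(v_{\mathbf R})\neq 0$ and shows it would have to lie in $K_{y,e}\cap K_{y,\epsilon}$, whereas you sum over all $\mathbf{R}\in K_{y,e_1}$ to force $x=0$ directly.
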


\begin{proof}
Suppose that $\rho_{y,e}(x)=0$, and let $\epsilon \in E_y$. We will show that $\rho_{y,\epsilon}(x)= 0$ implies $e=\epsilon$, thus proving the lemma.
Applying the Mayer-Vietoris sequence to the sets from  Lemma~\ref{l:mvs}, and using Lemma~\ref{l:Jtriv},
we have 
\begin{align*}
\oplus _{{\bf R}\in K_{y,e}} H_c^{|S|-2}( \mathcal{S}_{e,{\bf R}}) \rightarrow H_c^{|S|-1}(W_{S-w,(\max_{\epsilon \in E_y}\{s^\epsilon_{\bf Q}\})})
\rightarrow H^{|S|-1}_c( W_{S-w,(s^{e}_{\bf Q})} )
\end{align*}
where the map on the left is $\delta _{y,e}$ and the map on the right is $\rho_{y,e}$. Therefore, $\rho_{y,e}(x)=0$ implies $x=\delta_{y,e}(\sum v_{\bf R})=\delta_{y}(\sum v_{\bf R})$ for some $\sum v_{\bf R}$.

Now if $\rho_{y,\epsilon}(x)=0$, then similarly, $x=\delta_{y}(\sum w_{\bf R})$ for some $\sum w_{\bf R}$. Therefore, $\delta _y (\sum(v_{\bf R}-w_{\bf R}))=x-x=0$ which implies that $\delta_y(v_{\bf R}-w_{\bf R})=0$ for each $\bf R$ by the previous lemma. 

Now fix some $\bf R$ with $\delta _{y,e} (v_{\bf R})\neq 0$. Then $$\delta_{y,\epsilon}(w_{\bf R})=\delta_{y}(w_{\bf R})=\delta_{y}(v_{\bf R})=\delta_{y,e}(v_{\bf R})\neq 0$$ from which we deduce that ${\bf R}$ is contained in $K_{y,e}$ and $K_{y,\epsilon}$. Thus,  $e=\epsilon$.
 \end{proof}

We are now ready to prove the main result of this section.

\begin{proposition}\label{p:397}
If $( r_{\bf Q})_{{\bf Q}\in \mathcal{P}}$ is sufficiently large, then $H_c^{k}(W_{S,(r_{\bf Q})})=0$ if $k \neq |S|$ and thus $H_c^{k}(X_{S,(r_{\bf Q})})=0$ if $k \neq |S|$.
\end{proposition}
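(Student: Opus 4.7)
The plan is to induct on $|S|$. The base case $|S|=1$ follows from Lemma~\ref{l:947hdbk3129p} (handling $k=0$) combined with the observation that $W_{S,(r_{\bf Q})}$ is contained in the $1$-dimensional complex $X_S$ (handling $k\geq 2$).

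For the inductive step, fix $w\in S$ and project $W_{S,(r_{\bf Q})}$ to $X_w$ via $\pi_w$. First I would invoke Lemma~\ref{l:398268} to cover $W_{S,(r_{\bf Q})}$ by the closed, locally finite collection $\{F_e\}_{e\subset X_w}$ and its vertex-star refinement $\{F_y\}_{y\in X_w^{(0)}}$, where $F_y=\bigcup_{e\in E_y}F_e$. Next I would compute $H_c^*$ of the pieces: K\"unneth applied to $F_e=e\times W_{S-w,(s^e_{\bf Q})}$ (with $e$ a closed interval, so $H_c^*(e)=R$ in degree $0$) together with the inductive hypothesis shows $H_c^*(F_e)$ is concentrated in degree $|S|-1$. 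The proper homotopy equivalence $F_y\simeq W_{S-w,(\max_{\epsilon\in E_y}s^\epsilon_{\bf Q})}$ plus induction yields the same for $H_c^*(F_y)$. Since $X_w$ is a tree, every non-empty multi-intersection $F_{y_0}\cap\cdots\cap F_{y_p}$ is either some $F_e$ (when $p=1$, with $y_0\sim y_1$ via $e$) or of the form $\{v\}\times W_{S-w,(\cdot)}$ for a common neighbor $v$ (when $p\geq 2$), so by induction all such intersections have $H_c^*$ concentrated in degree $|S|-1$.

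The Mayer-Vietoris spectral sequence for the cover $\{F_y\}$ will thus concentrate on the single row $q=|S|-1$, degenerate at $E_2$, and converge to $H_c^{p+q}(W_{S,(r_{\bf Q})})$. Immediately, $H_c^k(W_{S,(r_{\bf Q})})=0$ for $k<|S|-1$, while $H_c^k(W_{S,(r_{\bf Q})})=0$ for $k>|S|$ holds automatically by dimension. What will remain is the identification $H_c^{|S|-1}(W_{S,(r_{\bf Q})})=\ker d_1$, where $d_1\colon\bigoplus_y H_c^{|S|-1}(F_y)\to\bigoplus_e H_c^{|S|-1}(F_e)$ is the \v{C}ech differential sending $(x_y)_y$ to the element whose $e$-coordinate is $\rho_{y,e}(x_y)-\rho_{y',e}(x_{y'})$ for $e=\overline{yy'}$.

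The hard part will be showing $\ker d_1=0$, where Lemma~\ref{l:nUFLQHO23} provides the key input. If $(x_y)\in\ker d_1$ is nonzero, compact support forces only finitely many $x_y$ to be nonzero, so within the minimal subtree of $X_w$ containing the support I would select a leaf $y_0$: a vertex with $x_{y_0}\neq 0$ and at most one neighbor $y_0'$ of $y_0$ satisfying $x_{y_0'}\neq 0$. Because the Bruhat-Tits tree $X_w$ for a rank-$1$ group over a non-archimedean local field with finite residue field has every vertex of degree at least $3$, $y_0$ has at least two edges $e\in E_{y_0}$ whose other endpoint $y'$ has $x_{y'}=0$; along each such edge $\rho_{y_0,e}(x_{y_0})=\rho_{y',e}(x_{y'})=0$. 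This contradicts the conclusion of Lemma~\ref{l:nUFLQHO23} that at most one such edge can exist. Hence $\ker d_1=0$ and the induction closes.
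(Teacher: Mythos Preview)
Your argument is correct and follows the same strategy as the paper: induct on $|S|$, run a spectral sequence over the tree $X_w$ whose only nonvanishing row is $q=|S|-1$, and then show that the differential $d\colon\bigoplus_y H_c^{|S|-1}(F_y)\to\bigoplus_e H_c^{|S|-1}(F_e)$ is injective by picking an extremal vertex of the support and invoking Lemma~\ref{l:nUFLQHO23} (your ``leaf of the support subtree'' is a mild rephrasing of the paper's choice, and both use that every vertex of $X_w$ has valence $\ge 3$).

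The one substantive difference is in the spectral sequence. The paper uses a coefficient-system spectral sequence on the $1$-complex $X_w$ (essentially Leray for $\pi_w$), so $E^{p,q}$ vanishes for $p\ge 2$ automatically and higher intersections never enter. Your Mayer--Vietoris/\v{C}ech version forces you to analyze the $(p{+}1)$-fold intersections $F_{y_0}\cap\dotsb\cap F_{y_p}$ for $p\ge 2$, and here there is a small gap: these intersections are indeed of the form $\{v\}\times W_{S-w,(t_{\bf Q})}$, but the tuple $(t_{\bf Q})$ is a coordinatewise minimum of several tuples $(s^e_{\bf Q})$ with $e\in E_v$, and you need $(t_{\bf Q})$ to be sufficiently large in order to invoke the inductive hypothesis. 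This is true---by Lemma~\ref{l:top} the values $s^e_{\bf Q}$ over $e\in E_v$ differ from one another by a uniformly bounded amount independent of $v$ and ${\bf Q}$, so the minimum stays sufficiently large once $(r_{\bf Q})$ is chosen large enough---but it should be said. With that sentence added, your argument goes through; the paper's route simply sidesteps the issue.
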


\begin{proof}
If $|S|=1$, then we have proved this lemma in Lemma~\ref{l:947hdbk3129p}, so we assume the lemma is true for $S-w$ and prove it is true for $S$.

By Lemma~\ref{l:398268}, and since for any vertex $y\in X_w$ we have $F_y=\cup_{e \in E_y}F_e$, we see that  the collection $\{F_y\}$ taken over all vertices $y \in X_w$ is a cover for $W_{S,(r_{\bf Q})}$.  Note also that if $y$ and $z$ are the endpoints of an edge $e \se X_w$, then $F_e=F_y \cap F_z$. Thus, the nerve of $\{F_y\}$ can be identified with $X_w$, and there's an associated spectral with $E_2^{pq}=H^p_c(X_w,\{H^q_c(F_*)\})$ that converges to $H^{p+q}_c(W_{S,(r_{\bf Q})})$. (See, e.g. \cite{Brown} VII.4 for the analogous homology sequence. The derivation of the sequence we use here is a straightforward adaptation of that one.)

Since $F_e=e\times W_{S-w,(s^e_{\bf Q})}$, our induction hypothesis implies that   $H_c^{q}(F_e)=0 $ for $q \neq |S|-1$ and, together with $X_w$ being 1-dimensional, that implies $H^{p}_c(X_w,\{H^{q}_c(F_*)\})=0$ if $q \neq |S|-1$ or if $p\geq 2$. Thus, we will have $H_c^{k}(W_{S,(r_{\bf Q})})=0$ for $k\neq |S|$ if  $H^0_c(X_w,\{H^{|S|-1}_c(F_*)\})=0$, so we will verify that the kernel of the map 
$$d : \oplus_{y\in X_w^{(0)}}  H_c^{|S|-1}(F_y) \rightarrow \oplus_{e\in X_w^{(1)}}  H_c^{|S|-1}(F_e)$$
is trivial.

To do this, suppose $\sum g_y \in \oplus  H_c^{|S|-1}(F_y)$ is nonzero.
Choose some vertex $y \in X_w$ with $g_y \neq 0$ and such that $y$ is contained in an edge $e$ and in the component of  $X_w - e^\circ$ all of whose vertices $y'\neq y$ have $g_{y'}=0$.

By Lemma~\ref{l:nUFLQHO23}, there is an edge $\epsilon \in E_y - e$ such that  $\rho_{y,\epsilon}(g_y)\neq 0$, Therefore, the $H_c^{|S|-1}(F_\epsilon)$ component of $d(\sum g_y)$ is nonzero, and thus $d(\sum g_y)\neq 0$.

We have seen that $H^{p}_c(X_w,\{H^{q}_c(F_*)\})=0$ if $(p,q)\neq (1,|S|-1)$.
\end{proof}

If the sequence $( r_{\bf Q})_{{\bf Q}\in \mathcal{P}}$ from Proposition~\ref{p:397} is constant, then we have the immediate

\begin{corollary}
If $r \gg 0$ and $k \neq |S|$, then $H_c^{k}(X_{S,r})=0$.
\end{corollary}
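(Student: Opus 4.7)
The plan is essentially bookkeeping, since the corollary is an immediate specialization of Proposition~\ref{p:397} to a constant tuple. First I would observe that when the tuple $(r_{\bf Q})_{{\bf Q}\in \mathcal{P}}$ is taken to be constant with every entry equal to $r$, the space $X_{S,(r_{\bf Q})}$ defined at the start of this section coincides with the space $X_{S,r}$ defined at the end of Section~\ref{s:red}, since both are the closure in $X_S$ of the complement of $\bigcup_{{\bf Q}\in\mathcal{P}} B_{\mathbf{Q},S,r}$.

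Second, I would verify that for $r$ sufficiently large, the constant tuple $(r)_{{\bf Q} \in \mathcal{P}}$ meets the \emph{sufficiently large} hypothesis required by Proposition~\ref{p:397}: namely that the horoballs $B_{\mathbf{Q},S,r}$ are pairwise disjoint and their pairwise distance is bounded below by an arbitrarily large constant. This is exactly the content of part (ii) of Proposition~\ref{p:prune2}, which guarantees that for any $N \geq 0$ there exists $r_1 > r_0$ such that the pairwise distance between distinct $B_{\mathbf{Q},S,r_1}$ is at least $N$. Choosing $N$ larger than the lower bound demanded by the definition of sufficiently large, any $r \geq r_1$ then makes the constant tuple sufficiently large.

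Finally, applying Proposition~\ref{p:397} directly to this constant tuple gives $H_c^{k}(X_{S,r}) = 0$ for $k \neq |S|$, as desired. There is no genuine obstacle in this argument; all substantive work is carried out by Proposition~\ref{p:397}, and the only task here is to check that the reduction-theoretic input of Proposition~\ref{p:prune2}(ii) allows a constant tuple to play the role of the more general tuple appearing in that proposition.
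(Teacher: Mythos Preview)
Your proposal is correct and matches the paper's approach exactly: the paper states the corollary as an ``immediate'' consequence of taking the tuple $(r_{\bf Q})_{{\bf Q}\in\mathcal{P}}$ in Proposition~\ref{p:397} to be constant. Your explicit invocation of Proposition~\ref{p:prune2}(ii) to verify the ``sufficiently large'' hypothesis simply fills in a detail the paper leaves implicit.
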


The proof of Proposition~\ref{p:397} given above applies to horosphere complements in products of trees that are more general than those arising from arithmetic groups. In particular,
suppose $d \in \mathbb{N}$ and that $T_i$, for $1 \leq i \leq d$, is a locally finite tree with no vertices of valence 1. Choose geodesics $\Sigma_i : \mathbb{R} \rightarrow T_i$ pararmeterized such that the integer values of $\Sigma_i$ are exactly the vertices in $T_i$ in the image of $\Sigma_i$. For any collection of positive numbers $\lambda _i $, let $\beta : \prod _{i=1}^d T_i \rightarrow \mathbb{R}$ be the Busemann function for $\rho :\mathbb{R} \rightarrow \prod _{i=1}^d T_i$ given by $\rho(t)=(\Sigma_i(\lambda_i t))_{i=1}^n$. Let $Z=\beta^{-1}((-\infty,r])$ for any given $r \in \mathbb{R}$.

\begin{corollary} \label{cor:hc_single_horoball_complement}
If $k \neq d$, then $H_c^{k}(Z)=0$.
\end{corollary}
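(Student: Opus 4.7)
The plan is to induct on $d$, mimicking the strategy of Proposition~\ref{p:397} and exploiting the simplification that comes from removing only a single Busemann horoball. Write $\beta_i : T_i \to \mathbb{R}$ for the Busemann function on $T_i$ with respect to $\Sigma_i$, so that $\beta(x_1, \dots, x_d) = \sum_i \lambda_i \beta_i(x_i)$. For the base case $d = 1$, $Z \subset T_1$ is the complement of a horoball in a tree; because every vertex has valence at least $2$ while only one incident edge moves toward $\Sigma_1(+\infty)$, every point of $Z$ extends to an infinite descending ray in $Z$, no component of $Z$ is compact, and $H_c^0(Z) = 0$. Higher $H_c^k(Z)$ vanish by dimension.

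For the inductive step, fix $w$ and project $\pi_w : \prod_i T_i \to T_w$. Following the argument of Lemma~\ref{l:fib} (with a single horoball in place of the family indexed by $\mathcal{P}$), the preimage $\pi_w^{-1}(e^\circ) \cap Z$ over each open edge $e \subset T_w$ is a product $e^\circ \times Z_e$, where $Z_e \subset \prod_{i \neq w} T_i$ is the complement of the Busemann horoball at parameter $s^e = r - \lambda_w \max_{x \in e} \beta_w(x)$. Set $F_e = e \times Z_e$ and $F_y = \bigcup_{e \in E_y} F_e$; then $\{F_y\}_{y \in T_w^{(0)}}$ is a closed cover of $Z$ whose nerve is $T_w$, and the spectral sequence from the proof of Proposition~\ref{p:397} yields
\[
E_2^{pq} = H_c^p\!\left(T_w;\,\{H_c^q(F_*)\}\right) \Longrightarrow H_c^{p+q}(Z).
\]
By the inductive hypothesis, $H_c^q(F_e) = 0$ unless $q = d - 1$, and because $\dim T_w = 1$ the $E_2$ page is concentrated at columns $p \in \{0, 1\}$. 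The entry $E_2^{1, d-1}$ contributes to the allowed dimension $H_c^d(Z)$, so it suffices to show $E_2^{0, d-1} = 0$.

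At each $y \in T_w$ there is a unique ``up'' edge $e^+(y)$ (moving toward $\Sigma_w(+\infty)$), with parameter $s^{e^+} < s^{e^-}$, where $s^{e^-}$ is the common value shared by all ``down'' edges at $y$. A deformation retraction collapsing the star of $y$ in $T_w$ exhibits $F_y \simeq Z_{s^{e^-}}$, and under this retraction each down-edge inclusion $F_{e^-} \hookrightarrow F_y$ induces the identity on $H_c^{d-1}$; in particular, $\rho_{y, e^-}$ is the identity map for every down edge $e^-$ at $y$. Now suppose
\[
(g_y) \in \ker\!\Big(\bigoplus_{y \in T_w^{(0)}} H_c^{d-1}(F_y) \to \bigoplus_{e \in T_w^{(1)}} H_c^{d-1}(F_e)\Big)
\]
is nonzero with finite support $V \subset T_w^{(0)}$. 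Choose $y \in V$ minimizing $\beta_w|_V$. Because $y$ has valence $\geq 2$ while only one edge at $y$ is ``up'', at least one down edge $e = [y, z]$ exists; the choice of $y$ forces $\beta_w(z) < \beta_w(y)$, so $z \notin V$ and $g_z = 0$. The $e$-component of the differential applied to $(g_y)$ is therefore $\rho_{y, e}(g_y) - \rho_{z, e}(g_z) = g_y \neq 0$, a contradiction.

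The only conceptually nontrivial input is the $E_2$-identification of the nerve spectral sequence for compactly supported cohomology of this closed cover, but that construction is already carried out in the proof of Proposition~\ref{p:397} and may be invoked directly; the remaining arguments are direct consequences of having only one horoball, which replaces the multi-parabolic Mayer--Vietoris analysis of Lemmas~\ref{l:Jtriv}--\ref{l:nUFLQHO23} by the trivial observation that the single ``bad'' edge at each vertex is $e^+(y)$.
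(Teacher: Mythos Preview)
Your proof is correct and follows essentially the same route as the paper: the paper simply observes that the proof of Proposition~\ref{p:397} never uses the arithmetic-specific tree valences or Busemann slopes, so it applies verbatim when all but one $r_{\bf Q}$ is set to $\infty$; you instead rerun that inductive argument explicitly in the single-horoball setting. Your write-up has the pleasant feature that, with only one horoball, the down-edge restriction maps $\rho_{y,e^-}$ are identities, so the vanishing of $E_2^{0,d-1}$ reduces to a one-line minimality argument in place of the multi-parabolic Mayer--Vietoris analysis of Lemmas~\ref{l:Jtriv}--\ref{l:nUFLQHO23}.
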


\begin{proof}
Apply Proposition~\ref{p:397} to a sequence $( r_{\bf Q})_{{\bf Q}\in \mathcal{P}}$ that has exactly one finite value, and the result is the statement of this corollary. The only exception is that Proposition~\ref{p:397} applies to trees whose valences are dictated by an arithmetic group, and it applies to Busemann functions for rays whose slopes (the $\lambda _i$) are determined by an arithmetic group. But neither of these explicit data are used in the proof of  Proposition~\ref{p:397}.
\end{proof}

\section{Topology of horospheres} \label{sec:horosphere}

Let $X=\prod_{i=1}^d T_i$ where
each $T_i$ is a locally finite tree with no vertices of valence $1$. Suppose each edge length in $T_i$ equals 1. For each tree $T_i$,
choose a geodesic $\Sigma_i \se T_i$ and label its vertices $x_{i,n}$ for
$n\in \Z$. This induces a height function $h_i$ on the vertices of
$T_i$ where $h_i(x_{i,0}) = 0$ and $h_i(v) = n-d(v,x_{i,n})$ if the closest vertex
of $\Sigma_i$ is $x_{i,n}$. Extend each $h_i$ linearly over edges to produce a height
function $h_i$ defined on all of $T_i$. For $1\leq i \leq d$, we choose $\lambda _i>0$ and we define a Busemann function $\beta: X\to \R$ by $\beta(x_1\dotsc, x_d) = \sum_i \lambda _i h_i(x_i)$.

Say that a vertex $v\in T_i$ is {\em below} a vertex $w\in T_i$ if
there is a path $\gamma$ from $v$ to $w$ such that $h_i\circ \gamma$
is strictly increasing. In this case we say $w$ is {\em above}
$v$. Note that for any $x_i\in T_i$ and $t > 0$ there is a unique
point $y_i\in T_i$ above $x_i$ such that $h_i(y_i) = h_i(x_i) + t$.
Using this notation, the assignment $x_i \mapsto y_i$ defines a flow
$\phi_{i,t} : T_i\to T_i$. These then define a
flow on $X$ by 
\[
 \phi_t(x_1,\dotsc, x_d) = \left( \phi_{1,t/(\lambda_1\sqrt{d})}(x_1), \dotsc,
 \phi_{d,t/(\lambda_d\sqrt{d})}(x_d) \right)
\]
Note that $\beta( \phi_t(x) ) = \beta(x) + t$.

For $r \in \mathbb{R}$, we define
\begin{align*}
Y_r &= \beta^{-1}(r)\\
X_r &= \beta^{-1}(-\infty, r],\text{ and}\\
B_r &= \beta^{-1}[r, \infty)
\end{align*}

The space $X$ naturally has the structure of a cube complex. Subdivide
this structure to give $X$ the structure of a cell complex such that
$Y_r$ and $X_r$ are subcomplexes. In particular, for each $(d-1)$-cell
$e$ of $Y_r$ there is a unique $d$-cell $\hat e$ of $X$ lying above
$e$ such that $\overset{\circ}{e} \cap \hat e \neq \emptyset$.

In this section, all cohomology groups will be understood to have
coefficients in some ring $R$.

\subsection{Horoball cohomology}

\begin{lemma} \label{l:horoball_cohom}
 For all $r$ and $k$, $H_c^k(B_r) = 0$.
\end{lemma}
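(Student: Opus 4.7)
The plan is to exploit the flow $\phi_t$ as a proper self-map of $B_r$ that is properly homotopic to $\mathrm{id}_{B_r}$, combined with the observation that for large $t$ the image $\phi_t(B_r) \subseteq B_{r+t}$ can be pushed disjoint from any prescribed compact subset. These two facts together will force every compactly supported cocycle on $B_r$ to represent zero.

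First I would verify that $\phi_t : B_r \to B_r$ is a proper continuous map for every $t \geq 0$. Since $\phi_t$ is a product of the factor maps $\phi_{i, t/(\lambda_i\sqrt{d})}$ on the trees $T_i$, this reduces to each $\phi_{i,s} : T_i \to T_i$ being proper, which follows because for any $w \in T_i$ the preimage $\phi_{i,s}^{-1}(w)$ is contained in the set of points of height $h_i(w) - s$ inside the subtree of $T_i$ below $w$---a set that is finite by local finiteness and the absence of valence-$1$ vertices. Next, the natural homotopy $H : B_r \times [0,1] \to B_r$ given by $H(x,s) = \phi_{st}(x)$ is a proper homotopy from $\mathrm{id}_{B_r}$ to $\phi_t$: for compact $K \subseteq B_r$, the preimage $H^{-1}(K)$ is contained in the product $\bigl( \bigcup_{s \in [0,t]} \phi_s^{-1}(K) \bigr) \times [0,1]$, and the indicated union is bounded in $B_r$ by a uniform version of the preceding properness argument, hence contained in a compact subset. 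Proper homotopy invariance of compactly supported cohomology then yields $\phi_t^* = \mathrm{id}^*$ on $H_c^*(B_r)$ for every $t \geq 0$.

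Finally, given any class in $H_c^k(B_r)$ represented by a cocycle $\omega$ with compact support $K$, I would choose $t$ so large that $\beta(y) > \max_{x\in K}\beta(x)$ for every $y \in \phi_t(B_r)$; this is possible because $\beta\circ\phi_t$ exceeds $\beta$ by a quantity proportional to $t$, so that $\phi_t(B_r) \subseteq B_{r+t}$ eventually lies strictly deeper than $K$. Then $\phi_t(B_r) \cap K = \emptyset$, so the pullback cochain $\phi_t^*\omega$ is identically zero on $B_r$, forcing $[\omega] = \phi_t^*[\omega] = [\phi_t^*\omega] = 0$ and hence $H_c^k(B_r) = 0$. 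The only technical point requiring care is the properness of the homotopy $H$, but this is a routine consequence of properness of $\phi_t$ together with compactness of $[0,1]$ and so presents no substantial obstacle; a minor secondary point is that one should work with singular (or \v{C}ech) compactly supported cochains, since $\phi_t$ need not be cellular for arbitrary $t$.
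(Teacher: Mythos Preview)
Your argument is correct. Both proofs exploit the same geometric fact---that $B_r$ can be flowed upward into itself past any compact set---but they package it differently.

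The paper proceeds via the direct-limit description $H_c^k(B_r)=\varinjlim_m H^k(B_r,B_r\setminus c(m))$: it exhibits an exhaustion $B_r=\bigcup_m C(m)$ by compact ``boxes'' $C(m)$ (points below $x_{i,m}$ in each factor), observes that each $C(m)$ deformation retracts onto its upper boundary $\partial^\uparrow C(m)$, and concludes by excision that every term $H^k(B_r,B_r\setminus c(m))$ vanishes. Your route instead invokes proper-homotopy invariance once and for all: since $\phi_t$ is properly homotopic to the identity, $\phi_t^*=\mathrm{id}$ on $H_c^*(B_r)$, and for any compactly supported class you may take $t$ large enough that $\phi_t^*$ annihilates a representing cocycle. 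Your version is shorter and avoids the excision bookkeeping; the paper's version stays closer to the cellular framework used elsewhere in the article and does not need to switch to singular cochains or to quote proper-homotopy invariance as a black box. The underlying content is the same upward flow, so neither approach is materially stronger than the other.
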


\begin{proof}
  For any number $m > n$ let
 \[
 C(m)=\{\, x = (x_1,\dotsc, x_d) \in X \mid \beta(x)\geq r \text{ and $x_i$ lies below
   $x_{i,m}$} \,\}
 \]
 and let
 \[
 \partial ^\uparrow C (m)=\{\, x \in C(m) \mid h_i(x) = m \text{ for some } i\,\}
 \]
 Note that $C(m)$ deformation retracts onto  $\partial ^\uparrow C(m)$. Thus we have that $H^k( C(m), \partial^\uparrow C(m) )=0$.

  Note that the sets $C(m)$ form an exhaustion of $B_r$ by compact
  sets. Let $c(m)$ be the closed subset of $C(m)$ consisting of points whose distance from $\partial ^\uparrow C(m)$ is at least $\varepsilon$, for some small $\varepsilon>0$.
   The compact sets $c(m)$ also
  form an exhaustion of $B_r$, so it suffices to show
  $H^k( B_r, B_r - c(m)) = 0$ for all $m$. By excision we have 
  \[
  H^k(B_r, B_r-c(m) ) \cong H^k( C(m), C(m)- c(m))
  \]
  Because $C(m) - c(m)$ deformation retracts onto $\partial^\uparrow C(m)$, we have
  \[
  H^k( C(m), C(m)-c(m)) \cong H^k( C(m), \partial^\uparrow C(m) )=0
  \]
\end{proof}

\subsection{Horosphere cohomology}

For $n \in \mathbb{N}$, the collection
$\{ H_c^{d-1}(Y_n) \}_{n\in \N}$ forms a directed system under the
maps $(\phi_1)^*: H_c^{d-1}(Y_{n+1}) \to H_c^{d-1}(Y_n)$, where
$\phi_1$ is the time 1 flow on $X$. The goal of this section is to
show $\varprojlim_n H_c^{d-1}(Y_n)$ is trivial and $\varprojlim^1_n
H_c^{d-1}(Y_n)$ is torsionfree.

There is a Mayer-Vietoris exact sequence
\[
\dotsb \to H_c^{d-1}(X_n) \oplus H_c^{d-1}(B_n) \to H_c^{d-1}(Y_n) \to
H_c^{d}(X) \to \dotsb
\]

We know $H_c^{d-1}(X_n) = 0$ by Corollary \ref{cor:hc_single_horoball_complement} and
$H_n^{d-1}(B_n) = 0$ by Lemma \ref{l:horoball_cohom}. Therefore the
connecting map $H_c^{d-1}(Y_n) \to H_c^d(X)$ is injective. In this way
we consider each module $H_c^{d-1}(Y_n)$ as a submodule of
$H_c^d(X)$. Note that $(\phi_1)^* : H_c^d(X) \to H_c^d(X)$ is the
identity map. It therefore follows from naturality of the
Mayer--Vietoris sequence that
$(\phi_1)^* : H_c^{d-1}(Y_{n+1}) \to H_c^{d-1}(Y_n)$ is the inclusion
map of subgroups of $H_c^d(X)$.

We will prove $\varprojlim_n H_c^{d-1}(Y_n) = 0$, for which we
set up notation. For any vertex $v\in T_i$ let $g_i(v)$ be the unique
vertex above $v$ such that $h_i(g_i(v)) = h_i(v)+1$. Under the
identification of $X^{(0)}$ with $\prod_i T_i^{(0)}$, let
$g: X^{(0)} \to X^{(0)}$ be the function defined to be $g_i$ in the
$i^{th}$ coordinate, so that $g(w)(i) = g_i(w(i))$ for any
$w\in \prod_i T_i^{(0)}$.

Given $i$ and $n$, let $C_{i,n} \subseteq T_i$ be the subtree of $T_i$
spanned by the set of vertices
\[ 
\left \{ v\in T_i \suchthat \text{$v$ is below $x_{i,n}$ and $h_i(v)\geq
  -n$} \right \}
\]
Let
$K_n = \prod_{i=1}^d C_{i,n}$. The collection
$\{ K_n \}_{n=0}^{\infty}$ forms an exhaustion of $X$ by compact sets,
and so $H_c^k(X) = \varinjlim_n H^k( X, X\setminus K_n).$

We compute each relative cohomology group $H^k(X, X\setminus K_n)$ as
the cohomology of the quotient space $X/(X\setminus K_n)$. Let
$EC_{i,n}$ be the set of vertices $v\in C_{i,n}$ such that
$h_i(v) = -n$. Each set $C_{i,n}$ is homotopy equivalent relative $EC_{i,n} \cup \{x_{i,n}\}$ to a star with
$\#(EC_{i,n})$ leaves, and so if $\partial K_n$ is comprised of points in $K_n$ whose $i^{th}$ coordinate for some $i$ is contained in $EC_{i,n} \cup \{x_{i,n}\}$, then $K_n$ is homotopy equivalent relative $\partial K_n$ to a cube
complex with a top dimensional cube for each vertex in
$\prod_{i=1}^d EC_{i,n}$. It follows that there is a homotopy
equivalence
\[
X/ (X\setminus K_n) \simeq  \bigvee_{v\in \prod_{i=1}^d EC_{i,n}} S^d
\]
To simplify notation, let $\Lambda_n$ be the set of vertices in
$\prod_{i=1}^d EC_{i,n}$. With this notation, it follows that there is
an isomorphism 
\[
H^k(X, X\setminus K_n) \cong
\begin{cases} 
R^{\Lambda_n} &\text{ if }k=d\\
0 &\text{ else.}
\end{cases}
\]
Under this identification, the map
$f_n : R^{\Lambda_n} \to R^{\Lambda_{n+1}}$ induced by the map of
pairs $(X, X\setminus K_{n+1}) \to (X, X\setminus K_n)$ is described
as follows: Given a function $\alpha : \Lambda_n \to R$,
define $f_n(\alpha)(w) = \alpha(g(w))$ if
$g(w) \in \Lambda_n$ and $f_n(\alpha)(w) = 0$
otherwise.

Given a vertex $v\in \Lambda_n$, let
$\overline{x_{i,n}, v(i)}$ denote the geodesic segment between
$x_{i,n}$ and $v(i)$, and let $F_v$ be the cube
$\prod_{i=1}^d \overline{x_{i,n}, v(i)}$. Note there is an equality of
spaces $K_n = \bigcup_{v\in \Lambda_n} F_v$. Given a compactly
supported cellular cochain $\phi\in Z_c^d(X)$, the assignment
$[\phi] \mapsto \left( v\mapsto \phi(F_v) \right)_n$ gives the
isomorphism $H_c^d(X) \cong \varinjlim_n R^{\Lambda_n}$.

Note that the above is a proof of the well-known

\begin{proposition} \label{prop:hctreeproduct}
 $H_c^k(X)=0$ if $k \neq d$ and $H_c^d(X)$ is a free $R$-module.
\end{proposition}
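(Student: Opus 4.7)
The plan is to read off both statements of Proposition~\ref{prop:hctreeproduct} from the explicit identification $H_c^k(X) \cong \varinjlim_n H^k(X, X\setminus K_n)$ that follows from $X$ being the increasing union of the compact sets $K_n$ together with the computation of the relative cohomology groups already carried out in the text just before the proposition is stated. For $k \neq d$ the relative groups all vanish, so the direct limit vanishes and $H_c^k(X) = 0$. For $k = d$ we obtain $H_c^d(X) \cong \varinjlim_n R^{\Lambda_n}$ under the transition maps $f_n$ described explicitly in the text, so the remaining content is to extract freeness from this direct system.

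For freeness, I would exploit that the maps $f_n$ are not merely injective but split: unpacking the definition, for each $v \in \Lambda_n$ one has
\[
f_n(e_v) \;=\; \sum_{w \in g^{-1}(v) \cap \Lambda_{n+1}} e_w,
\]
and the fibers $g^{-1}(v) \cap \Lambda_{n+1}$ as $v$ ranges over $\Lambda_n$ are pairwise disjoint subsets of $\Lambda_{n+1}$ (and are nonempty, since the no-valence-one hypothesis on each $T_i$ forces each vertex to have at least one down-edge). Choosing, for each $v \in \Lambda_n$, a distinguished element $w_v \in g^{-1}(v)$, I would perform the evident change of basis on $R^{\Lambda_{n+1}}$ replacing each $e_{w_v}$ by $f_n(e_v)$ while keeping the other $e_w$ fixed. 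This is a unimodular change of basis, so $f_n(R^{\Lambda_n})$ is a direct summand of $R^{\Lambda_{n+1}}$, with complementary summand the free module on the "new" basis elements not distinguished by any $v$.

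Writing $U_n$ for the image of $R^{\Lambda_n}$ in the colimit, the previous step gives a splitting $U_{n+1} = U_n \oplus V_{n+1}$ for some finitely generated free module $V_{n+1}$, compatible with the filtration. Telescoping, $\varinjlim_n U_n \cong U_1 \oplus \bigoplus_{n \geq 2} V_n$, which is a direct sum of free $R$-modules and hence free. This identifies $H_c^d(X)$ as a free $R$-module and completes the proof.

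The only place any real work is hidden is the fiber disjointness and the nonemptiness of $g^{-1}(v) \cap \Lambda_{n+1}$; the first is immediate because $g$ is a well-defined map $\Lambda_{n+1} \to \Lambda_n$, while the second is the one point where the no-valence-one hypothesis gets used, ensuring that each vertex of $T_i$ at height $-n$ has at least one down-edge and so at least one preimage at height $-(n+1)$. Everything else is bookkeeping on the direct system already set up in the excerpt.
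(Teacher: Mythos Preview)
Your argument is correct and follows the paper's approach: the paper's entire proof is the sentence ``Note that the above is a proof of the well-known'' result (with a pointer to Borel--Serre for a more general statement), so the content really is the preceding computation of $H^k(X, X\setminus K_n)$ together with the direct-limit description. You supply the one detail the paper omits, namely why $\varinjlim_n R^{\Lambda_n}$ is free, and your split-injectivity and telescoping argument does this correctly.

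One small inaccuracy worth fixing: you justify disjointness of the fibers by saying ``$g$ is a well-defined map $\Lambda_{n+1} \to \Lambda_n$,'' but in fact $g$ need not carry $\Lambda_{n+1}$ into $\Lambda_n$: a vertex $w(i)\in EC_{i,n+1}$ may lie below $x_{i,n+1}$ via a height-$n$ vertex other than $x_{i,n}$, in which case $g_i(w(i))\notin EC_{i,n}$. This does not harm your argument---disjointness of the sets $g^{-1}(v)\cap\Lambda_{n+1}$ follows simply from $g$ being a function on $X^{(0)}$, and the basis vectors $e_w$ with $g(w)\notin\Lambda_n$ are untouched by your change of basis and land in the complementary summand $V_{n+1}$. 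With that correction the proof is complete.
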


See Borel-Serre \cite{B-S2} for a more general theorem about the compactly supported cohomology of Euclidean buildings.

We now observe that $H_c^*(Y_r)$ is concentrated in dimension $d-1$.

\begin{proposition} \label{prop:horosphere-cohom}
 $H_c^k(Y_r)=0$ if $k \neq d-1$ and $H_c^{d-1}(Y_r)$ is a free $R$-module.

\end{proposition}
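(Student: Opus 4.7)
The plan is to apply the Mayer--Vietoris sequence to the closed cover $X = X_r \cup B_r$ with $Y_r = X_r \cap B_r$. The long exact sequence
\[
\cdots \to H_c^k(X) \to H_c^k(X_r) \oplus H_c^k(B_r) \to H_c^k(Y_r) \to H_c^{k+1}(X) \to \cdots
\]
combined with $H_c^k(X) = 0$ for $k \neq d$ from Proposition~\ref{prop:hctreeproduct}, $H_c^k(B_r) = 0$ for all $k$ from Lemma~\ref{l:horoball_cohom}, $H_c^k(X_r) = 0$ for $k \neq d$ from Corollary~\ref{cor:hc_single_horoball_complement}, and the dimensional bound $H_c^k(Y_r) = 0$ for $k \geq d$ (since $\dim Y_r = d-1$), should force $H_c^k(Y_r) = 0$ for every $k \neq d-1$ and collapse the sequence at $k = d-1$ to the short exact sequence
\[
0 \to H_c^{d-1}(Y_r) \to H_c^d(X) \to H_c^d(X_r) \to 0.
\]

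To show $H_c^{d-1}(Y_r)$ is a free $R$-module, my plan is to compute it directly by the same exhaustion-and-quotient method used in Proposition~\ref{prop:hctreeproduct}. Taking $L_n := K_n \cap Y_r$ as a compact exhaustion of $Y_r$, one has $H_c^{d-1}(Y_r) = \varinjlim_n H^{d-1}(Y_r, Y_r \setminus L_n)$, and the task splits into two parts: (i) identifying each $H^{d-1}(Y_r, Y_r \setminus L_n)$ with $R^{\Lambda_n'}$ for a combinatorial indexing set $\Lambda_n'$, by homotoping the pair $(L_n, \partial L_n)$ to a wedge of $(d-1)$-spheres in the spirit of the star-product reduction $K_n \simeq \prod_i \mathrm{star}_i$ rel $\partial K_n$ used in the paper; and (ii) verifying that the transition maps of the direct system have the same ``pullback along a partial map of index sets'' form as the $f_n$ appearing in the computation of $H_c^d(X)$. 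The resulting direct limit of free $R$-modules along injective transition maps is then itself free.

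The principal obstacle lies in step (i). Unlike $K_n$, the subcomplex $L_n$ is not a Cartesian product but the affine level set $\beta = r$ inside the product of finite trees $\prod_i C_{i,n}$, cutting diagonally through every top-dimensional cube of the subdivided $K_n$. Enumerating the $(d-1)$-cells of $L_n$ and pinning down the indexing set $\Lambda_n'$ amounts to a cell-by-cell analysis of which subdivided cubes of $K_n$ are sliced by the hyperplane $\beta = r$ and how the resulting polytopal faces glue to form $L_n / \partial L_n$. An alternative route -- computing $H_c^d(X_r)$ directly via the same exhaustion and deducing freeness of the kernel in the short exact sequence above -- faces essentially the same combinatorial challenge, which I expect to be the most delicate step of the proof.
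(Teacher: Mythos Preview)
Your Mayer--Vietoris argument for the vanishing of $H_c^k(Y_r)$ when $k\neq d-1$ is correct and is exactly what the paper does. The divergence is in the freeness claim. The paper performs no direct computation of $H_c^{d-1}(Y_r)$ or of $H_c^d(X_r)$: it simply reads off from the same exact sequence you wrote down that $H_c^{d-1}(Y_r)$ injects into $H_c^d(X)$, and the target is free by Proposition~\ref{prop:hctreeproduct}. Over a PID---which covers every application in the paper---a submodule of a free module is free, and one is done; for an arbitrary commutative ring $R$ the same conclusion follows by first running the argument with $R=\Z$ and then applying the universal coefficient theorem (using that $H_c^d(Y_r;\Z)=0$ by dimension). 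So the combinatorial analysis of $L_n = K_n\cap Y_r$ that you anticipate is unnecessary: you already have the short exact sequence, and you need nothing about the quotient $H_c^d(X_r)$ to conclude that its kernel is free.

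Separately, the final step of your proposed direct route is not valid as stated: a direct limit of free $R$-modules along injective transition maps need not be free. For instance $\Z\xrightarrow{\times 2}\Z\xrightarrow{\times 2}\Z\to\cdots$ has limit $\Z[1/2]$, which is not a free $\Z$-module. To salvage that approach you would have to show that the transition maps carry a basis into a basis (or are split injections with free, compatibly chosen complements), which is more than the ``pullback along a partial map of index sets'' description automatically gives and is in any case far more work than the one-line argument above.
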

\begin{proof}
  There is a Mayer-Vietoris exact sequence
  \[
    \dotsb \to H_c^{k}(X_r) \oplus H_c^{k}(B_r) \to H_c^{k}(Y_r) \to
    H_c^{k+1}(X) \to \dotsb
  \]
  By Corollary \ref{cor:hc_single_horoball_complement} we know
  $H_c^k(X_r) = 0$ for $k\leq d-1$. By Lemma \ref{l:horoball_cohom} we
  know $H_c^k(B_r) = 0$ for all $k$. And by Proposition
  \ref{prop:hctreeproduct} we know $H_c^{k+1}(X) = 0$ for $k\leq
  d-2$. The result follows.
\end{proof}

Using the notation that we established prior to Proposition~\ref{prop:hctreeproduct}, we will prove 

\begin{lemma} \label{horosphere_lim0}
  $\varprojlim_n H_c^{d-1}(Y_n) = 0$.
\end{lemma}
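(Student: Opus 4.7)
The plan is to identify $\varprojlim_n H_c^{d-1}(Y_n)$ with the intersection $\bigcap_n H_c^{d-1}(Y_n) \subseteq H_c^d(X)$ and then to show this intersection vanishes. The preceding discussion already shows that the inverse-system maps $(\phi_1)^{\ast}$ are inclusions of subgroups of $H_c^d(X)$, so the inverse limit really is this intersection; and from the Mayer--Vietoris long exact sequence together with $H_c^{\ast}(B_n)=0$ (Lemma~\ref{l:horoball_cohom}), the subgroup $H_c^{d-1}(Y_n)$ coincides with $\ker\bigl(H_c^d(X) \to H_c^d(X_n)\bigr)$.

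So I will take $[\phi] \in \bigcap_n H_c^{d-1}(Y_n)$ represented by some $\phi \in Z_c^d(X)$, and fix $n$ large enough that $\supp(\phi) \subseteq X_n$ and $n > \sum_i \lambda_i$. The hypothesis forces $[\phi|_{X_n}] = 0$ in $H_c^d(X_n)$, so there exists $\psi_n \in C_c^{d-1}(X_n)$ with $d\psi_n = \phi|_{X_n}$. I will then extend $\psi_n$ by zero to $\tilde\psi_n \in C_c^{d-1}(X)$ and compute $d\tilde\psi_n$ cell by cell in the subdivided structure (in which $Y_n$ is a subcomplex): on a $d$-cell $\sigma \subseteq X_n$ one recovers $\phi(\sigma)$, while on a $d$-cell $\sigma$ with interior in $B_n \setminus Y_n$ the only surviving contributions come from faces lying in $Y_n$, giving $(d\tilde\psi_n)(\sigma) = \epsilon_n(\sigma) := \sum_{\tau \subseteq \sigma,\, \tau \subseteq Y_n} \pm \psi_n(\tau)$. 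Thus $d\tilde\psi_n = \phi + \epsilon_n$ for a compactly supported $d$-cocycle $\epsilon_n$ whose support is contained in $d$-cells of $B_n$, and since $d\tilde\psi_n$ is a compactly supported coboundary we conclude $[\phi] = -[\epsilon_n]$ in $H_c^d(X)$.

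Finally I will invoke the isomorphism $H_c^d(X) \cong \varinjlim_m R^{\Lambda_m}$ to show $[\epsilon_n] = 0$. The key estimate is that for every $v \in \Lambda_1$ the block $F_v = \prod_i \overline{x_{i,1}, v(i)}$ satisfies $\beta|_{F_v} \leq \sum_i \lambda_i < n$, so $F_v \subseteq X_n \setminus Y_n$; every $d$-cell contained in $F_v$ therefore lies in $X_n$ and is disjoint from $\supp(\epsilon_n)$. Hence $\epsilon_n(F_v) = 0$ for every $v \in \Lambda_1$, so the level-$1$ representative of $[\epsilon_n]$ in $R^{\Lambda_1}$ is zero. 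Consequently $[\epsilon_n] = 0$ in $\varinjlim_m R^{\Lambda_m}$, and therefore $[\phi] = 0$.

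The main delicate point is establishing the cochain identity $d\tilde\psi_n = \phi + \epsilon_n$ on the subdivided cell structure: one must verify that extension of $\psi_n$ by zero across $Y_n$ produces precisely the boundary correction $\sum_{\tau \subseteq \sigma,\, \tau \subseteq Y_n} \pm \psi_n(\tau)$ on $d$-cells above $Y_n$, and no other terms. Once that identity is in hand, the vanishing of $[\epsilon_n]$ follows immediately from the height estimate for $F_v$ and the fact that nontriviality at level $m=1$ is the first place in the colimit where a class can fail to be zero.
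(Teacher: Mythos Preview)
Your reduction to showing $[\epsilon_n]=0$ is fine: the identification $H_c^{d-1}(Y_n)=\ker\bigl(H_c^d(X)\to H_c^d(X_n)\bigr)$ is correct, and extending $\psi_n$ by zero does produce $d\tilde\psi_n=\phi+\epsilon_n$ with $\epsilon_n$ supported on the layer of $d$-cells just above $Y_n$. The gap is in your final step.

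Under the isomorphism $H_c^d(X)\cong\varinjlim_m R^{\Lambda_m}$, a cocycle $\alpha$ is represented at level $m$ by $(\alpha(F_v))_{v\in\Lambda_m}$ \emph{only when} $\supp(\alpha)\subseteq K_m$. Your cocycle $\epsilon_n$ is supported in $B_n$, which is disjoint from $K_1$ by your own height estimate; so there is no ``level-$1$ representative'' of $[\epsilon_n]$ at all, and the equalities $\epsilon_n(F_v)=0$ for $v\in\Lambda_1$ carry no information about the class. What you actually need is $\epsilon_n(F_w)=0$ for all $w\in\Lambda_N$, where $N$ is large enough that $\supp(\epsilon_n)\subseteq K_N$. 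For such $w$ the cube $F_w$ genuinely crosses $Y_n$, so $\epsilon_n(F_w)=\pm\psi_n(F_w\cap Y_n)$; there is no reason this should vanish for an arbitrary primitive $\psi_n$ of $\phi|_{X_n}$. Said differently, your argument would prove that any class with a representative supported outside $K_1$ is trivial, which is false: take $d=1$, a regular tree, and the indicator of a single edge $e$ far above $K_1$; this is a nontrivial class in $H_c^1(X)$ yet vanishes on every $F_v$ with $v\in\Lambda_1$.

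The paper closes exactly this gap. It fixes $\supp(\phi)\subseteq K_n$, takes $m>\beta(K_{n+1})$, and for each $v\in\Lambda_n$ chooses $w\in\Lambda_N$ above $v$ together with auxiliary vertices $w_\sigma$ (indexed by subsets $\sigma\subseteq\{1,\dots,d\}$) obtained by swapping coordinates of $w$ into branches that miss $K_n$. An inclusion--exclusion computation shows the $(d-1)$-chain $\sum_\sigma(-1)^{|\sigma|}(F_{w_\sigma}\cap Y_m)$ is zero, and evaluating $\delta\psi$ on $\sum_\sigma(-1)^{|\sigma|}F_{w_\sigma}$ then forces $\phi(F_v)=0$. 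This combinatorial cancellation is the missing ingredient; your coboundary trick alone does not supply it.
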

\begin{proof}
  Take any cohomology class $[\phi] \in H_c^d(X)$. Choose $n\in \N$ so
  that the support of $\phi$ is contained in $K_n$. 

  Consider any vertex $v\in \Lambda_n$, and choose $m$ such that $m >\beta(K_{n+1})$.
  Suppose
  $[\phi] = [\delta \psi]$ for some $\psi\in H_c^{d-1}(Y_m)$, where
  $\delta$ is the chain map inducing the connecting homomorphism in
  the Mayer-Vietoris sequence. Choose $N>n+1$ such that the support of
  $\psi$, and hence also the support of $\delta \psi$, is contained in
  $K_N$. Because $[\phi]$ and $[\delta \psi]$ are equal in
  $H_c^d(X)$ their images in $R^{\Lambda_N}$ are equal.

  Choose any $w\in \Lambda_N$ so that $g^{N-n}(w) = v$. Then $F_v
  \subseteq F_w$. Since the suport of $\phi$ is contained in $K_n$ and
  $F_v = F_w\cap K_n$ we have $\phi(F_v) = \phi(F_w)$. 

  For each $1\leq i \leq d$ choose a vertex $e_i \in EC_{i, N}$ such
  that $g_i^{N-n-1}(e_i) \in EC_{i, n+1}$ but $g_i^{N-n} \notin EC_{i,
    n}$. Let $P(d)$ be the set of subsets of $\{1,\dotsc, d\}$. For
  each $\sigma \in P(d)$ define $w_\sigma \in \Lambda_n$ by
  \[
  w_\sigma(i) = \begin{cases} w(i) & \text{if }i\notin \sigma \\
    e_i& \text{if } i\in \sigma.\end{cases}
  \]
  Note that $w_\emptyset = w$. If $\sigma \neq \emptyset$ then
  $F_{w_\sigma} \cap K_n = \emptyset$ and so $\phi(F_{w_\sigma}) = 0$.
  Because $[\phi]$ and $[\delta\psi]$ have the same image in
  $R^{\Lambda_N}$ we see
  $\phi(F_{w_\sigma}) = \delta\psi( F_{w_\sigma} )$ for all
  $\sigma \in P(d)$.

  We claim the $(d-1)$-chain
  $\sum_{\sigma \in P(d)} (-1)^{|\sigma|} (F_{w_\sigma} \cap Y_m )$ is
  the zero chain. Indeed, for $1\leq i \leq d$, let $u_i$ be an order 2 isometry of $T_i$ with $u_i(w(i))=e_i$ and $u_i(x_i)=x_i$ if $h_i(x_i)\geq n+1$. For $\sigma \in P(d)$ we let $u_\sigma$ be the product of $u_i$ with $i \in \sigma$. In particular, $u_\emptyset =1$. Notice that $u_\sigma F_w = F_{w_\sigma}$, and that $u_\sigma Y_m =Y_m$.

  We let $P(d)^*=P(d)-\{1,\cdots,d\}$, and for each $\tau \in P(d)^*$, we define $$R_\tau =\{(x_i) \in F_w \cap Y_m \mid h_i(x_i)\leq n+1 \text{ precisely when } i\in \tau \}$$
 Thus, $\cup _{\tau \in P(d)^*} R_\tau =  F_w \cap Y_m$, if $\tau_1 \neq \tau_2$ then $R_{\tau_1}$ and $R_{\tau_2}$ do not contain a common $(d-1)$-cell, and if $\sigma \in P(d)$ then $u_{\sigma} R_\tau =u_{\sigma \cap \tau}R_\tau$.
 
 Recall that by the binomial theorem, if $k \in \mathbb{N}$, then
 $\sum_{\mu \in P(k)} (-1)^{|\mu|} = 0$. Thus we have 
 \begin{align*}
 \sum_{\sigma \in P(d)} (-1)^{|\sigma|} (F_{w_\sigma} \cap Y_m )
 & =  \sum_{\sigma \in P(d)} (-1)^{|\sigma|} (u_\sigma F_{w} \cap Y_m ) \\
& =  \sum_{\sigma \in P(d)} (-1)^{|\sigma|} u_\sigma( F_{w} \cap Y_m ) \\
 &=\sum_{\sigma \in P(d)} (-1)^{|\sigma|} u_\sigma\Big(\sum_{\tau \in P(d)^*} R_\tau \Big) \\
  &=\sum_{\tau \in P(d)^*} \sum_{\sigma \in P(d)} (-1)^{|\sigma|}  u_\sigma R_\tau  \\
 &=\sum_{\tau \in P(d)^*} \sum_{\sigma \in P(d)} (-1)^{|\sigma|}  u_{\sigma \cap \tau} R_\tau  \\
  &=\sum_{\tau \in P(d)^*} \sum_{\rho \in P(|\tau|)} \sum_{\mu \in P(d-|\tau|)} (-1)^{|\rho|+|\mu|}  u_{\rho} R_\tau  \\
   &=\sum_{\tau \in P(d)^*} \sum_{\rho \in P(|\tau|)} (-1)^{|\rho|} \sum_{\mu \in P(d-|\tau|)} (-1)^{|\mu|}  u_{\rho} R_\tau  \\
    &=\sum_{\tau \in P(d)^*} \sum_{\rho \in P(|\tau|)} (-1)^{|\rho|}\; 0 \;  u_{\rho} R_\tau  \\
  &  =0
 \end{align*}
  
  This establishes our claim that
  $\sum_{\sigma \in P(d)} (-1)^{|\sigma|} (F_{w_\sigma} \cap Y_m )$ is
  the zero chain.
  
  Using the definition of the connecting map $\delta$
  we therefore have
  \begin{align*}
    0 &= \psi \left( \sum_{\sigma \in P(d)} (-1)^{|\sigma|} F_{w_\sigma}
      \cap Y_\sigma \right) \\ 
      &= \sum_{\sigma \in P(d)} (-1)^{|\sigma|} \delta \psi( F_{w_\sigma}
        ) \\
      &= \sum_{\sigma \in P(d)} (-1)^{|\sigma|} \phi( F_{w_\sigma}) \\
      &= \phi( F_w)\\
      &= \phi(F_v)
  \end{align*}
  This shows that if $[\phi]\in H_c^d(X)$,  if  $[\phi] $ is contained  in $ H_c^{d-1}(Y_m) \leq H_c^d(X)$ for some sufficiently large value of $m$, then
  $[\phi] = 0$. That is, $\varprojlim_n H_c^{d-1}(Y_n)=\cap_m H_c^{d-1}(Y_m) =0$.
\end{proof}

Our next goal is to prove that $\varprojlim^1 H_c^{d-1}(Y_m)$ is
torsion-free. 
\begin{lemma} \label{cohomdivision} Suppose
  $[\psi] \in H_c^{d-1}(Y_m)$ and suppose there are
  $[\phi]\in H_c^d(X)$ and $r\in R$ such that
  $r[\phi] = [\delta \psi]$. Then there is some
  $[\tilde \phi] \in H_c^{d-1}(Y_m)$ such that
  $r[\tilde \phi] = [\psi]$.
\end{lemma}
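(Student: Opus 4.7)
The plan is to combine Mayer--Vietoris for the decomposition $X = X_m \cup B_m$ (with $X_m \cap B_m = Y_m$) with the vanishing results already established: $H^k_c(B_m)=0$ for all $k$, $H^k_c(X_m)=0$ for $k \neq d$, and $H^k_c(Y_m)=0$ for $k \neq d-1$. First, I would represent $\delta\psi$ by a specific cocycle $\zeta \in Z^d_c(X)$ via the Mayer--Vietoris recipe: $\zeta|_{X_m}=0$ and $\zeta|_{B_m}=-d\widetilde\psi$, where $\widetilde\psi$ is the extension of $\psi$ to $B_m$ by $0$. The hypothesis $r[\phi]=[\delta\psi]$ then provides $\eta \in C^{d-1}_c(X)$ with $d\eta = r\phi - \zeta$.

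Next, using $H^d_c(B_m)=0$, choose $\mu \in C^{d-1}_c(B_m)$ with $d\mu = \phi|_{B_m}$. Restricting $d\eta = r\phi - \zeta$ to $B_m$ and combining with $d\mu = \phi|_{B_m}$ yields $d(\eta|_{B_m} - r\mu - \widetilde\psi)=0$; then $H^{d-1}_c(B_m)=0$ gives a $\nu \in C^{d-2}_c(B_m)$ with $\eta|_{B_m} - r\mu - \widetilde\psi = d\nu$. Restricting this chain-level identity to $Y_m$ produces the crucial cohomological identity
\[
[\eta|_{Y_m}] = r[\mu|_{Y_m}] + [\psi] \quad \text{in } H^{d-1}_c(Y_m).
\]

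The heart of the argument is then to show that $\phi|_{X_m}$ is exact in $C^d_c(X_m)$, i.e.\ that $[\phi|_{X_m}]=0$ in $H^d_c(X_m)$. Restricting $d\eta = r\phi - \zeta$ to $X_m$ (where $\zeta|_{X_m}=0$) gives $r\phi|_{X_m} = d(\eta|_{X_m})$, so $[\phi|_{X_m}]$ is $r$-torsion. I would establish that $H^d_c(X_m)$ is torsion-free by exhausting $X_m$ by compact subcomplexes $L_n = X_m \cap K_n$ (using the $K_n$ from Section~\ref{sec:horosphere}), identifying each relative cohomology $H^d(X_m, X_m \setminus L_n)$ as a free $R$-module via a direct analysis of the cubical cells at the boundary of $L_n$ (analogous to the description $H^d(X, X \setminus K_n) \cong R^{\Lambda_n}$ used earlier), and concluding that the filtered colimit $H^d_c(X_m) = \varinjlim_n H^d(X_m, X_m\setminus L_n)$ is torsion-free.

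Once $\phi|_{X_m}=d\omega$ for some $\omega \in C^{d-1}_c(X_m)$, the identity $r\phi|_{X_m}=d(\eta|_{X_m})$ becomes $d(\eta|_{X_m} - r\omega)=0$, and $H^{d-1}_c(X_m)=0$ gives $\eta|_{X_m} = r\omega + d\rho$ for some $\rho \in C^{d-2}_c(X_m)$. Restricting to $Y_m$ yields $[\eta|_{Y_m}] = r[\omega|_{Y_m}]$, which combined with the earlier identity gives $[\psi] = r[\omega|_{Y_m} - \mu|_{Y_m}]$. Setting $[\widetilde\phi]:=[\omega|_{Y_m} - \mu|_{Y_m}]$ (automatically a top-dimensional cocycle on $Y_m$) then satisfies $r[\widetilde\phi]=[\psi]$. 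The main obstacle is the torsion-freeness of $H^d_c(X_m)$; the key difficulty there is handling the cells of $L_n$ that meet the horosphere $Y_m$, where the local picture of the relative cohomology differs from the picture for $K_n \subset X$.
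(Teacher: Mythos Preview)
Your approach is considerably more roundabout than the paper's, and the step you yourself flag as ``the main obstacle'' is essentially the whole content of the lemma. From the Mayer--Vietoris short exact sequence
\[
0 \longrightarrow H_c^{d-1}(Y_m) \xrightarrow{\ \delta\ } H_c^d(X) \longrightarrow H_c^d(X_m) \longrightarrow 0,
\]
saying that $H_c^d(X_m)$ has no $r$-torsion (for a regular $r$) is equivalent to the statement of the lemma for that $r$: each direction is a two-line chase using injectivity of $\delta$ and freeness of $H_c^d(X)$. So your reduction to ``$H_c^d(X_m)$ is torsion-free'' is a reformulation, not a simplification, and the exhaustion argument you sketch is exactly where all the substance would have to go. You have not carried that out, and the cells of $L_n$ meeting $Y_m$ genuinely alter the relative picture compared with $K_n \subset X$, so it is not an immediate variant of the earlier computation. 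There is also a small gap for non-regular $r$: from ``$r[\phi|_{X_m}]=0$ and $H_c^d(X_m)$ torsion-free'' you cannot conclude $[\phi|_{X_m}]=0$ when $r$ is a zero divisor, though only regular $r$ is needed downstream.

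The paper bypasses all of this with a short direct construction. Instead of $X_m$, it uses the thin collar $\hat Y_m \subset X$ consisting of exactly the $d$-cells lying immediately above $Y_m$. Each $(d-1)$-cell $c$ of $Y_m$ has a \emph{unique} $d$-cell $\hat c$ of $\hat Y_m$ above it, so the connecting map $\hat\delta : H_c^{d-1}(Y_m) \to H_c^d(\hat Y_m)$ is an explicit isomorphism with inverse $\epsilon$ given by $(\epsilon\phi)(c) = \phi(\hat c)$. Since $i^* \circ \delta = \hat\delta$ for the restriction $i^* : H_c^d(X) \to H_c^d(\hat Y_m)$, one simply sets $[\tilde\phi] = \epsilon\, i^*([\phi])$ and computes $r[\tilde\phi] = \epsilon\, i^*(r[\phi]) = \epsilon\, i^*(\delta[\psi]) = \epsilon\,\hat\delta[\psi] = [\psi]$. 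In particular $\epsilon\, i^*$ is a retraction of $\delta$, so $H_c^{d-1}(Y_m)$ is a direct summand of the free module $H_c^d(X)$ and hence $H_c^d(X_m)$ is torsion-free --- your target fact, obtained here as a corollary rather than as the hard step.
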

\begin{proof}
  Let $\hat Y_m$ be the subcomplex of $X$ containing only
  the (subdivided) $d$-cells $e \se X$ such that $\beta(e)\geq m$ and $e\cap Y_m \neq \emptyset$.
  Let $i: \hat Y_m\to X$ be the inclusion map. The Mayer-Vietoris
  connecting map induces a homomorphism
  $\hat \delta : H_c^{d-1}(Y_m) \to H_c^d(\hat Y_m)$ such that the
  following diagram commutes:
  \[ 
  \xymatrix{ & H_c^{d-1}(Y_m) \ar[dl]_\delta \ar[dr]^{\hat\delta} & \\
    H_c^d(X) \ar[rr]^{i^*}& & H_c^d(\hat Y_m) } 
  \]
  Given any $(d-1)$-cell $c\subseteq Y_m$, let $\hat c \subseteq \hat Y_m$ be the unique
  $d$-cell in $\hat Y_m$ with $c \subseteq {\hat c}$.
  
  If 
  $[\phi]\in H_c^d(\hat Y_m)$, then define $[\epsilon \phi]\in H_c^{d-1}(Y_m)$ by $\epsilon\phi(c) = \phi(\hat c)$.
  Then 
  $\epsilon : H_c^d(\hat Y_m)
  \to H_c^{d-1}(Y_m)$ is the inverse of $\hat \delta$, so there
  is an isomorphism $H_c^{d-1}(Y_m) \cong H_c^d(\hat Y_m)$. The lemma
  follows by setting $[\tilde \phi] = \epsilon i^* ( [\phi])$.
\end{proof}

\begin{lemma} \label{horosphere_lim1tf}
  $\varprojlim^1 H_c^{d-1}(Y_m)$ is torsionfree as an $R$-module.
\end{lemma}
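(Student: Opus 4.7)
The plan is to show that if $r \in R$ is a non-zero-divisor and $\alpha \in \varprojlim^1 H_c^{d-1}(Y_m)$ satisfies $r\alpha = 0$, then $\alpha = 0$. I will write $V_m = H_c^{d-1}(Y_m)$, viewed as a submodule of $H_c^d(X)$ via the Mayer--Vietoris connecting map $\delta$. A representative $(x_m) \in \prod V_m$ for $\alpha$, together with witnesses $(y_m) \in \prod V_m$ to $r\alpha = 0$, give $rx_m = y_m - y_{m+1}$, and telescoping in $H_c^d(X)$ yields $y_0 - y_m = r(x_0 + \cdots + x_{m-1})$. In particular, all the $y_m$ are congruent to each other modulo $r H_c^d(X)$.

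The heart of the argument will be showing $y_0 \in rH_c^d(X)$, which I plan to do by adapting the proof of Lemma \ref{horosphere_lim0}. Writing $y_m = \delta\psi_m$ for some $\psi_m \in H_c^{d-1}(Y_m)$, we have $y_0 = \delta\psi_m + r h_m$ with $h_m := x_0 + \cdots + x_{m-1} \in H_c^d(X)$. Fix $n$ so that $y_0$ is supported in $K_n$, pick $m > \beta(K_{n+1})$, and then pick $N > n+1$ large enough that both $\delta\psi_m$ and $h_m$ have support in $K_N$. For $v \in \Lambda_n$ and $w \in \Lambda_N$ chosen as in Lemma \ref{horosphere_lim0} with $g^{N-n}(w) = v$, the support conditions give $y_0(F_v) = \sum_{\sigma \in P(d)} (-1)^{|\sigma|} y_0(F_{w_\sigma})$. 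Substituting $y_0 = \delta\psi_m + r h_m$ and invoking the identity $\sum_\sigma (-1)^{|\sigma|} (F_{w_\sigma} \cap Y_m) = 0$ established in that proof, the $\delta \psi_m$ contribution collapses to $\psi_m(0) = 0$, leaving $y_0(F_v) = r \sum_\sigma (-1)^{|\sigma|} h_m(F_{w_\sigma}) \in rR$. Since this holds for every $v \in \Lambda_n$, under $H^d(X, X\setminus K_n) \cong R^{\Lambda_n}$ we conclude that $y_0 \in rH_c^d(X)$.

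To finish, each $y_m \in V_m$ is congruent to $y_0 \pmod{rH_c^d(X)}$ and therefore also lies in $rH_c^d(X)$. Lemma \ref{cohomdivision} then produces $c_m \in V_m$ with $y_m = rc_m$, unique because $V_m$ embeds in the free $R$-module $H_c^d(X)$ and $r$ is a non-zero-divisor. Substituting into $rx_m = y_m - y_{m+1}$ yields $r(x_m - c_m + c_{m+1}) = 0$, so $x_m = c_m - c_{m+1}$. Thus $(x_m)$ is in the image of $\Delta$, and $\alpha = 0$.

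The main obstacle I anticipate is the second step. In Lemma \ref{horosphere_lim0}, the classes $[\phi]$ and $[\delta\psi]$ agree in $H_c^d(X)$, so their cochain-level evaluations on the cells $F_{w_\sigma}$ match; here $y_0$ and $\delta\psi_m$ differ by the nontrivial class $rh_m$, so care is needed to track how $rh_m$ enters the computation. The saving grace is that every contribution of $h_m$ appears multiplied by $r$ and so automatically lies in $rR$, which is precisely what is needed to conclude $y_0 \in rH_c^d(X)$.
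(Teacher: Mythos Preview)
Your proposal is correct and follows essentially the same approach as the paper: both arguments use the alternating-sum identity from Lemma~\ref{horosphere_lim0} to show the witnesses $y_m$ (the paper's $[\psi_m]$) are divisible by $r$ in $H_c^d(X)$, then invoke Lemma~\ref{cohomdivision} to divide inside $H_c^{d-1}(Y_m)$ and conclude $(x_m)$ lies in the image of $\Delta$. The only cosmetic difference is that you prove divisibility once for $y_0$ and propagate via the telescoping congruence $y_m \equiv y_0 \pmod{rH_c^d(X)}$, whereas the paper repeats the argument for each $m$ separately by comparing $\psi_m$ to a later $\psi_M$; your explicit mention of uniqueness of the $c_m$ is also a welcome clarification.
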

\begin{proof}
  Recall that $\varprojlim^1 H_c^{d-1}(Y_m)$ is the cokernel of the map 
  $$\Delta : \prod _m H_c^{d-1}(Y_m) \rightarrow \prod _m H_c^{d-1}(Y_m)  $$
  where $\Delta\big(([\psi_m])_m\big)=([\psi_m]-[\psi_{m+1}])_m$.
  
  Suppose $( [\xi_m])_m \in \prod_m H_c^{d-1}(Y_m)$ and that there is some
  regular element $r\in R$ such that $( r [\xi_m] )_m$ is in the
  image of $\Delta$. Let $([\psi_m])_m \in \prod_m H_c^{d-1}(Y_m)$ be
  a sequence such that $r [\xi_m] = [\psi_m] - [\psi_{m+1}]$ for all
  $m$. Note that this implies  that for any $M > m$ there is some
  $\zeta_{m,M} \in H_c^{d-1}(Y_m)$ so that
  $r [\zeta_{m,M}] = [\psi_m] - [\psi_M]$.

  Fix any $m$ and choose $n\in \N$ such that $\psi_m$ is supported
  on $K_{n}$. Take any $v\in \Lambda_{n}$. Choose any $M>m$ such
  that $M>\beta(K_{n})$. Choose $N\in \N$ such that $\psi_M$
  is supported on $K_N$.

  Choose any $w\in \Lambda_N$ such that $g^{N-n}(w) = v$. Construct
  vertices $w_\sigma \in \Lambda_N$ for each $\sigma\in P(d)$ as in the
  proof of Lemma \ref{horosphere_lim0}, so that $F_{w_\emptyset} = F_v$
  and $\psi_m( F_{w_\sigma} ) = 0$ if $\sigma \neq \emptyset$ and
  $\sum_{\sigma \in P(d)} (-1)^{|\sigma|} F_{w_\sigma} \cap Y_M$ is
  the zero chain. Then
  \[ \psi_M \left( \sum_{\sigma \in P(d)} (-1)^{|\sigma|}
    F_{w_\sigma} \cap Y_M \right) = 0 \]
so
 \[ \delta\psi_M \left( \sum_{\sigma \in P(d)} (-1)^{|\sigma|}
    F_{w_\sigma} \right) = 0 \]

  It follows that $r$ divides the quantity
  \begin{align*}
  (\delta \psi_m - \delta \psi_M)&\left( \sum_{\sigma \in P(d)}
    (-1)^{|\sigma|} F_{w_\sigma} \right)\\
                                 &= \delta\psi_m \left( \sum_{\sigma \in P(d)}
    (-1)^{|\sigma|} F_{w_\sigma} \right) - \delta\psi_M \left( \sum_{\sigma \in P(d)}
    (-1)^{|\sigma|} F_{w_\sigma} \right)\\
  &= \delta\psi_m(F_{w_\emptyset})\\
  &= \psi_m(F_v  \cap Y_M)
  \end{align*}

  This holds for any vertex $v\in \Lambda_n$, so $r$ divides the image
  of $[\psi_m]$ in $R^{\Lambda_{n}}$. Therefore $r$ divides the image of
  $[\psi_m]$ in $H_c^d(X)$. By Lemma \ref{cohomdivision}, for each $m$
  there is some $[\phi_m] \in H_c^{d-1}(Y_m)$ such that $r [\phi_m] =
  [\psi_m]$. It follows that $([\xi_m])_m = \Delta( [\phi_m] )$, which
  completes the proof.
\end{proof}

\section{Examples of semiduality groups}
\label{sec:examples}

Below we provide examples of semiduality groups. In order to verify
the condition on the cohomological dimension, we recall the following
standard result.

\begin{lemma} \label{lem:cdlemma} Suppose $\Lambda$ is a group acting
  on an acyclic cell complex $X$ with finite cell stabilizers. Suppose
  $R$ is a commutative ring such that $\abs{\Lambda_\sigma}$ is
  invertible in $R$ for any cell stabilizer $\Lambda_\sigma$. Then
  $\cd_{R}\Lambda \leq \dim(X)$.
\end{lemma}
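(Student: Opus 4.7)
The plan is to construct an explicit projective resolution of the trivial $R\Lambda$-module $R$ of length $\dim(X)$ using the augmented cellular chain complex of $X$, and to verify projectivity of each chain module by exploiting that $|\Lambda_\sigma|$ is invertible in $R$.

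First I would set $d=\dim(X)$ and consider the augmented cellular chain complex
\[
0 \to C_d(X;R) \to C_{d-1}(X;R) \to \cdots \to C_0(X;R) \to R \to 0
\]
Because $X$ is acyclic this sequence is exact, and since $\Lambda$ acts cellularly each $C_p(X;R)$ is naturally an $R\Lambda$-module. Choosing a set $\Sigma_p$ of $\Lambda$-orbit representatives of $p$-cells, there is a decomposition of $R\Lambda$-modules
\[
C_p(X;R) \;\cong\; \bigoplus_{\sigma\in \Sigma_p} R\Lambda \otimes_{R\Lambda_\sigma} R_\sigma,
\]
where $R_\sigma$ denotes $R$ with the (possibly twisted, by the orientation character) $\Lambda_\sigma$-action arising from how $\Lambda_\sigma$ permutes the cell $\sigma$. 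Thus it suffices to show that each summand is a projective $R\Lambda$-module.

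Next, I would invoke the hypothesis that $|\Lambda_\sigma|$ is a unit in $R$. The averaging idempotent $e_\sigma = \frac{1}{|\Lambda_\sigma|}\sum_{g\in\Lambda_\sigma} \chi(g)\, g \in R\Lambda_\sigma$ (with $\chi$ the character giving the action on $R_\sigma$) realizes $R_\sigma$ as a direct summand of $R\Lambda_\sigma$, so $R_\sigma$ is a projective $R\Lambda_\sigma$-module. Induction from a subgroup preserves projectivity, so $R\Lambda \otimes_{R\Lambda_\sigma} R_\sigma$ is projective as an $R\Lambda$-module, and hence so is each $C_p(X;R)$.

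Assembling these pieces, the truncated complex $C_\bullet(X;R)\to R$ is a projective resolution of the trivial $R\Lambda$-module $R$ of length $d$. By the standard characterization of cohomological dimension via projective resolutions, this yields $\cd_R \Lambda \leq d = \dim(X)$. The only subtle point, and what I would expect to be the main thing to take care with, is the treatment of orientation-reversing stabilizers; once one observes that invertibility of $|\Lambda_\sigma|$ in $R$ gives a Maschke-type splitting regardless of the twist $\chi$, the argument proceeds as above.
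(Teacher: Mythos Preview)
Your argument is correct but follows a different route from the paper. You build a projective resolution of $R$ directly from the cellular chain complex of $X$: the decomposition $C_p(X;R)\cong \bigoplus_{\sigma\in\Sigma_p} R\Lambda\otimes_{R\Lambda_\sigma} R_\sigma$ together with the Maschke-type idempotent shows each chain module is $R\Lambda$-projective, and acyclicity of $X$ gives a length-$\dim(X)$ projective resolution of $R$. The paper instead fixes an arbitrary coefficient module $M$ and runs the equivariant cohomology spectral sequence $E_1^{jq}=\prod_{\sigma\in\Sigma_j} H^q(\Lambda_\sigma;M_\sigma)\Rightarrow H^{j+q}(\Lambda;M)$, observing that the invertibility hypothesis kills all terms with $q>0$ so that $H^n(\Lambda;M)=0$ for $n>\dim(X)$. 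Your approach is more elementary and yields an explicit resolution one can reuse; the spectral-sequence argument avoids having to analyse the orientation twist and generalises immediately to statements about the actual values of $H^*(\Lambda;M)$ rather than just the vanishing range. Both are standard and equally valid here.
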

\begin{proof}
  Suppose $M$ is an $R\Lambda$-module. For each $j$ let $\Sigma_j$ be a
  set of representatives of $\Lambda$-orbits of $j$-cells of $X$. There
  is a spectral sequence (compare to the homology version appearing in 
  \cite[VII.7.7, p173]{Brown})
  \[
    E_1^{jq} = \prod_{\sigma\in \Sigma_j} H^q(\Lambda_\sigma; M_\sigma) \implies
    H^{j+q}(\Lambda; M)
  \]
  For any $q>0$ the module $H^q(\Lambda_\sigma, M_\sigma)$ is annihilated by
  $\abs{\Lambda_\sigma}$. But $M_\sigma$ is an $R$-module and
  $\abs{\Lambda_\sigma}$ is invertible in $R$ for any cell $\sigma$,
  so the groups $H^q(\Lambda_\sigma; M_\sigma)$ are trivial if $q>0$. By
  definition, the groups $E_1^{jq}$ are trivial for $j > \dim (X)$. It
  follows that $\cd_{R}\Lambda \leq \dim(X)$.
\end{proof}

\subsection{Rank 1 arithmetic groups}

In this section we prove Theorem \ref{t:mt}. To that end, suppose
$\mathcal{O}_S$ is the ring of $S$-integers in a global function field
$K$ of characteristic $p$. Suppose $\bf G$ is a noncommutative,
absolutely almost simple algebraic $K$-group and $\Gamma$ is a
finite-index subgroup of $\bfg(\mco_S)$ such that any torsion element
of $\Gamma$ is a $p$-element. 

\begin{proposition} 
  $\Gamma$ is a $\Z[1/p]$-semiduality group of dimension $k(\bfg, S)$.
\end{proposition}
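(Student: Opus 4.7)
The plan is to verify in turn the four conditions defining a $\Z[1/p]$-semiduality group of dimension $d = k(\bfg, S) = |S|$, using that $X_S$ is a product of $|S|$ trees since each $K_v$-rank equals $1$.

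Condition (i): $\Gamma$ acts on $X_S$ with finite cell stabilizers, and because every torsion element of $\Gamma$ is a $p$-element, each such stabilizer is a finite $p$-group (by Cauchy's theorem), hence has order invertible in $\Z[1/p]$. Since $X_S$ is contractible and $\dim X_S = |S|$, Lemma~\ref{lem:cdlemma} gives $\cd_{\Z[1/p]}\Gamma \leq k(\bfg,S)$. Condition (ii), that $\Gamma$ is of type $FP_{k(\bfg,S)-1}$ over $\Z[1/p]$, is the theorem of Bux--K\"ohl--Witzel \cite{B-K-W} cited in the introduction.

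For (iii) and (iv) I move to the topological side. After passing to a finite-index subgroup if needed so that cell stabilizers preserve orientations, Proposition~\ref{prop:ourcohomology} gives $H^*(\Gamma; \Z[1/p]\Gamma) \cong \hcu{*}(X_S; \Z[1/p])$. I exhaust $X_S$ by the $\Gamma$-invariant closed subcomplexes $X_{S,r_n}$ of Section~\ref{s:red} with $r_n \to \infty$, each carrying a cocompact $\Gamma$-action by Lemma~\ref{l:prop-cocpt}. Proposition~\ref{p:397} shows $H_c^k(X_{S,r_n}; \Z[1/p]) = 0$ for $k \neq |S|$, so Proposition~\ref{prop:hcu_ex} yields $\hcu{k}(X_S; \Z[1/p]) = 0$ for $k \neq |S|$ (establishing (iii)) and the identification
\[
D := H^{|S|}(\Gamma; \Z[1/p]\Gamma) \cong \varprojlim_n H_c^{|S|}(X_{S,r_n}; \Z[1/p]).
\]

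For (iv) I need $D$ to be torsion-free over $\Z[1/p]$. For sufficiently large $r$ the horoballs $B_{\mathbf{Q},S,r}$ are pairwise disjoint, so the Mayer--Vietoris sequence for $X_S = X_{S,r} \cup \bigcup_{\mathbf{Q} \in \mathcal{P}} B_{\mathbf{Q},S,r}$, combined with the $H_c^*$-acyclicity of horoballs (Lemma~\ref{l:horoball_cohom}), the concentration of horosphere cohomology in dimension $|S|-1$ (Proposition~\ref{prop:horosphere-cohom}), the vanishing $H_c^{|S|-1}(X_{S,r}) = 0$ (Proposition~\ref{p:397}), and the freeness of $H_c^{|S|}(X_S; \Z[1/p])$ (Proposition~\ref{prop:hctreeproduct}), degenerates to a short exact sequence
\[
0 \to \bigoplus_{\mathbf{Q} \in \mathcal{P}} H_c^{|S|-1}(Y_{\mathbf{Q},S,r}; \Z[1/p]) \to H_c^{|S|}(X_S; \Z[1/p]) \to H_c^{|S|}(X_{S,r}; \Z[1/p]) \to 0.
\]
Applying $\varprojlim$ in $r$ (and using that the constant system $H_c^{|S|}(X_S)$ has trivial $\varprojlim^1$) produces a four-term exact sequence whose middle terms are $H_c^{|S|}(X_S; \Z[1/p])$ and $D$ and whose outer terms are $\varprojlim$ and $\varprojlim^1$ of the horosphere summand. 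Lemma~\ref{horosphere_lim0} shows that this $\varprojlim$ vanishes, while Lemma~\ref{horosphere_lim1tf} shows that this $\varprojlim^1$ is $\Z[1/p]$-torsion-free. Hence $D$ is an extension of a torsion-free module by the free module $H_c^{|S|}(X_S; \Z[1/p])$, and therefore torsion-free.

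The main obstacle will be the last step, where the interchange of $\varprojlim$ (and $\varprojlim^1$) with the direct sum over the countably infinite set $\mathcal{P} \cong (\bfg/\mathbf{P})(K)$ must be justified; this is not automatic for general inverse systems, but should follow here from the geometric fact that any compact subset of $X_S$ meets only finitely many of the horoballs $B_{\mathbf{Q},S,r}$ for fixed $r$, which lets one reduce the computation to the single-horosphere results of Section~\ref{sec:horosphere} applied one parabolic at a time. A careful tracking of the same limits will also produce the refined exact sequence for $D$ that appears in the statement of Theorem~\ref{t:mt}.
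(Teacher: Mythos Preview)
Your proposal is correct and follows essentially the same route as the paper's proof: verify (i) via Lemma~\ref{lem:cdlemma}, cite \cite{B-K-W} for (ii), identify $H^*(\Gamma;\Z[1/p]\Gamma)$ with $\hcu{*}(X_S)$ via Proposition~\ref{prop:ourcohomology}, use Proposition~\ref{p:397} together with Proposition~\ref{prop:hcu_ex} to concentrate $\hcu{*}$ in dimension $|S|$, and then run Mayer--Vietoris on $X_S = X_{S,r} \cup \bigcup_{\mathbf{Q}} B_{\mathbf{Q},S,r}$ followed by the $\varprojlim$--$\varprojlim^1$ sequence and Lemmas~\ref{horosphere_lim0} and~\ref{horosphere_lim1tf} to obtain torsion-freeness of $D$.

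Your flagged ``obstacle'' about commuting $\varprojlim^1$ with the infinite direct sum over $\mathcal{P}$ is a point the paper itself glosses over: it simply asserts that the maps preserve the direct sum structure and writes $\bigoplus_{\mathcal{P}} \varprojlim^1 H_c^{d-1}(Y_n)$ in the final short exact sequence without further comment. In fact the interchange is justifiable here because the transition maps $H_c^{d-1}(Y_{n+1}) \to H_c^{d-1}(Y_n)$ are injective (they are inclusions of submodules of $H_c^d(X_S)$, as shown in \S\ref{sec:horosphere}), each $H_c^{d-1}(Y_n)$ is torsion-free, and the proof of Lemma~\ref{horosphere_lim1tf} works coordinatewise; since any $\psi_m \in \bigoplus_{\mathbf{Q}} H_c^{d-1}(Y_{\mathbf{Q},m})$ has finite support and the modules are torsion-free, the divisor $\phi_m$ produced there also has finite support. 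So your caution is warranted, but no genuinely new idea is required beyond what the paper already contains.
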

\begin{proof}
  $\Gamma$ acts on the product of trees $X_S$ with finite $p$-group
  stabilizers. It follows from Lemma \ref{lem:cdlemma} that
  $\cd_{\Z[1/p]}\Gamma \leq \dim(X_S) = k({\bf G}, S)$.  It is known
  that $\Gamma$ is type $FP_{k({\bf G}, S)-1}$ over $\Z[1/p]$ (see
  \cite{St}, \cite{B-W2}, \cite{B-K-W}).  It remains to show that
  $H^*(\Gamma, \Z[1/p]\Gamma)$ is concentrated in dimension
  $k({\bf G}, S)$, where it is flat as a $\Z[1/p]$-module.

  For sufficiently large $n\in \N$, $\Gamma$ acts properly cocompactly
  on each $X_{S,n}$ by Lemma \ref{l:prop-cocpt}. As $n$ tends to
  infinity, the spaces $X_{S,n}$ exhaust $X_S$. By Proposition
  \ref{prop:ourcohomology} there is an isomorphism
  $\hcu{k}(X_S) \cong H^k(\Gamma, \Z[1/p]\Gamma)$, where here and for
  the rest of the proof we take cohomology of spaces with $\Z[1/p]$
  coefficients.  We proved in Proposition \ref{p:397} that
  $H_c^*(X_{S,n})$ is concentrated in dimension $d$. It follows from
  Proposition \ref{prop:hcu_ex} that $\hcu{*}(X_S)$ is concentrated in
  dimension $d$, where $\hcu{d}(X_S) = \varprojlim_n H_c^d( X_{S,n})$.

  It remains only to show that $\varprojlim_n H_c^d( X_{S,n})$ is
  $\Z[1/p]$-torsion-free, since $\Z[1/p]$ is a principal ideal domain.
  The closure of the complement of $X_{S,n}$ is a
  disjoint union of horoballs
  $\bigcup_{{\bf Q}\in \mcp} B_{{\bf Q}, S, n}$. Up to
  proper homotopy, each set $B_{{\bf Q}, S, n}$ is a
  set of the form $B_n$ as defined in \S\ref{sec:horosphere}, and
  $X_{S,n} \cap B_n = Y_n$, where $Y_n = Y_{{\bf Q}, S, n}$.  There is a Mayer-Vietoris exact sequence
  \begin{align*}
    H_c^{d-1}(X_{S,n}) \oplus &H_c^{d-1}\big( \bigcup_{\mcp} B_n \big)
                            \to H_c^{d-1} \big( \bigcup_{\mcp} Y_n
                            \big) \to \\ 
                          &H_c^{d}(X_S) \to H_c^d(X_{S,n}) \oplus
                            H_c^d\big( \bigcup_\mcp B_n \big) \to
                            H_c^d\big(\bigcup_\mcp Y_n\big)
  \end{align*}
  Because the unions are disjoint, for each $k$ there are isomorphisms
  \[
    H_c^k\big( \bigcup_\mcp B_n \big) \cong \bigoplus_\mcp H_c^k(B_n)
    \quad \text{ and } \quad 
    H_c^k\big( \bigcup_\mcp Y_n \big) \cong \bigoplus_\mcp H_c^k(Y_n)
  \]
  We know $H_c^k(B_n) = 0$ for all $k$ by Lemma
  \ref{l:horoball_cohom}. We also know $H_c^{d-1}(X_{S,n}) = 0$ by
  Proposition \ref{p:397}. Clearly $H_c^d(Y_n) = 0$ since $Y_n$ is
  $(d-1)$-dimensional. Therefore we have a short exact sequence
  \[
    0 \to \bigoplus_\mcp H_c^{d-1}(Y_n) \to H_c^d(X_S) \to H_c^d(X_{S,n}) \to 0
  \]
  These maps are compatible with the maps induced by inclusion
  $i_n : X_{S,n}\to X_{S, n+1}$, the time 1 flow $\phi_1 : Y_n \to Y_{n+1}$,
  and the identity map $X_S\to X_S$. The above short exact sequence
  therefore gives rise to a short exact sequence of codirected systems
  of compactly support cohomology, from which there is an exact
  sequence
  \[
    0\to \varprojlim \bigoplus_\mcp H_c^{d-1} (Y_n ) \to
    H_c^d(X_s) \to \varprojlim H_c^d(X_{S,n}) \to \varprojlim{}^1
    \bigoplus_\mcp H_c^{d-1} ( Y_n ) \to 0
  \]
  The maps of the system $\{ \oplus H_c^{d-1}(Y_n) \}$ preserve the
  direct sum structure. We know $\varprojlim H_c^{d-1}(Y_n)$ is
  trivial by Proposition \ref{horosphere_lim0} and
  $\varprojlim^1 H_c^{d-1}( Y_n )$ is torsionfree by Proposition
  \ref{horosphere_lim1tf}. Since $H_c^d(X_S)$ is torsionfree by
  Proposition \ref{prop:hctreeproduct}, it follows that
  $\varprojlim H_c^d(X_{S,n})$ is torsionfree by the short exact sequence
  \begin{equation} \label{eq:finales}
    0 \to H_c^d(X_s) \to \varprojlim H_c^d(X_{S,n}) \to 
    \bigoplus_\mcp \varprojlim{}^1 H_c^{d-1} ( Y_n ) \to 0
  \end{equation}
\end{proof}

This proves Theorem \ref{t:mt}, as every module in sequence
\eqref{eq:finales} is a $\Z[1/p]\bfg(K)$-module by Lemma
\ref{lem:commaction}.

\subsection{Solvable groups}

In this section we prove groups of the form $\Btwo(\mco_S)$ are
semiduality groups. We then prove that some generalizations of certain
groups of this form are also semiduality groups, namely lamplighter
groups, Diestel-Leader groups, and countable direct sums of finite
groups. All are straightforward applications of the following lemma.

\begin{lemma} \label{l:horoball-semiduality} Let $X$ be a product of
  $d$ trees with Busemann function $\beta: X\to \R$ as described in
  \S\ref{sec:horosphere}. Suppose a group $\Lambda$ acts on $X$
  cellularly, with finite cell stabilizers, and cocompactly on subsets
  of the form $\beta^{-1}(I)$ for closed intervals $I$. Suppose $R$ is a
  principal ideal domain such that $\abs{\Lambda_\sigma}$ is
  invertible for every cell stabilizer $\Lambda_\sigma$. Then
  $\Lambda$ is an $R$-semiduality group of dimension $d$.
\end{lemma}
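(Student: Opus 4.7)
The plan is to verify the four conditions (i)--(iv) defining an $R$-semiduality group of dimension $d$, following the arc of the proof of Theorem~\ref{t:mt} but with a single family of horoballs. For condition~(i), since $X$ is contractible and $d$-dimensional and $\Lambda$ acts cellularly with finite cell stabilizers of order invertible in $R$, Lemma~\ref{lem:cdlemma} immediately gives $\cd_R \Lambda \leq d$. For condition~(ii), the group $\Lambda$ acts cocompactly with $R$-invertible stabilizers on a slab $\beta^{-1}([a,b])$; granted $(d-2)$-connectivity of such a slab (a standard Morse-theoretic fact for level sets in products of trees, in the spirit of Stuhler, Bux-Wortman, and Bux-K\"ohl-Witzel), the usual finiteness criteria then yield type $FP_{d-1}$ over $R$.

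The main content is conditions~(iii) and~(iv). Proposition~\ref{prop:ourcohomology} identifies $H^k(\Lambda; R\Lambda) \cong \hcu{k}(X; R)$. The slabs $S_n = \beta^{-1}([-n, n])$ form a $\Lambda$-invariant, $\Lambda$-cocompact exhaustion of $X$, so by the short exact sequence~\eqref{eq:limlim1},
\[
  0 \to \varprojlim{}^1 H_c^{k-1}(S_n) \to \hcu{k}(X) \to \varprojlim H_c^k(S_n) \to 0,
\]
and the task reduces to computing $H_c^*(S_n)$ with its inverse-system transition maps. Applying Mayer--Vietoris to the closed decomposition $X = X_n \cup B_{-n}$ with intersection $S_n$, and invoking $H_c^*(B_{-n}) = 0$ from Lemma~\ref{l:horoball_cohom}, $H_c^k(X_n) = 0$ for $k \neq d$ from Corollary~\ref{cor:hc_single_horoball_complement}, and $H_c^k(X) = 0$ for $k \neq d$ from Proposition~\ref{prop:hctreeproduct}, the long exact sequence collapses to give $H_c^k(S_n) = 0$ for $k \neq d-1$ and an isomorphism $H_c^{d-1}(S_n) \cong \ker(H_c^d(X) \to H_c^d(X_n))$. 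The parallel Mayer--Vietoris for the decomposition $X = X_n \cup B_n$ with intersection $Y_n$ identifies the same kernel with $H_c^{d-1}(Y_n)$, producing the key isomorphism $H_c^{d-1}(S_n) \cong H_c^{d-1}(Y_n)$.

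Substituting back into the displayed sequence: for $k \leq d-2$ both ends vanish; for $k = d-1$ the $\varprojlim^1$-term vanishes and $\varprojlim H_c^{d-1}(Y_n) = 0$ by Lemma~\ref{horosphere_lim0}, so $\hcu{d-1}(X) = 0$; and for $k = d$, $\varprojlim H_c^d(S_n) = 0$ while $\varprojlim^1 H_c^{d-1}(S_n) \cong \varprojlim^1 H_c^{d-1}(Y_n)$ is $R$-torsion-free by Lemma~\ref{horosphere_lim1tf}, hence flat since $R$ is a PID. This yields (iii) and (iv). The main obstacle I anticipate is the naturality check behind the last paragraph: verifying that the two inverse systems $\{H_c^{d-1}(S_n)\}$ (under slab restriction maps) and $\{H_c^{d-1}(Y_n)\}$ (under the flow pullbacks $(\phi_1)^*$) are identified as the \emph{same} decreasing chain of submodules of $H_c^d(X)$, namely $\ker(H_c^d(X) \to H_c^d(X_n))$. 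This amounts to comparing the connecting homomorphisms of two different Mayer--Vietoris covers of $X$ and tracking their compatibility under the natural maps between slabs, horospheres, and horoball complements.
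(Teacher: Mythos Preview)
Your argument is correct, and the naturality concern you flag can indeed be resolved: both $H_c^{d-1}(S_n)$ and $H_c^{d-1}(Y_n)$ are identified, via their respective Mayer--Vietoris connecting maps, with the \emph{same} submodule $\ker\bigl(H_c^d(X)\to H_c^d(X_n)\bigr)$, and naturality of Mayer--Vietoris with respect to the inclusions $(X,X_n,B_{-n})\hookrightarrow (X,X_{n+1},B_{-(n+1)})$ shows the transition maps become the inclusions $K_{n+1}\subseteq K_n$ of these kernels, exactly as in the paper's discussion preceding Lemma~\ref{horosphere_lim0} for the $Y_n$ system.

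That said, the paper takes a shorter route that sidesteps this issue entirely. Since $\Lambda$ preserves $\beta$-level sets, it acts on the horoball $B_0=\beta^{-1}[0,\infty)$, which is itself contractible; the paper applies Proposition~\ref{prop:ourcohomology} to $B_0$ rather than to $X$. The exhaustion is then by the half-slabs $X_n=\beta^{-1}[0,n]$, and the flow $\phi_t$ gives a \emph{proper deformation retraction} of $X_n$ onto $Y_n$, yielding $H_c^*(X_n)\cong H_c^*(Y_n)$ directly---no Mayer--Vietoris and no compatibility check required. Your two-sided slabs $S_n$ likewise flow onto $Y_n$, so you could shortcut your own argument the same way. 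What your approach buys is that it stays on the full space $X$ and makes explicit the relationship between slabs and horospheres inside $H_c^d(X)$; what the paper's approach buys is brevity.
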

\begin{proof}
  Define 
  \begin{align*}
    Y_n &= \beta^{-1}(\{n\})\\
    X_n &= \beta^{-1}[0, n],\text{ and}\\
    B_n &= \beta^{-1}[n, \infty)
  \end{align*}

  The space $X$ is contractible, so by Lemma \ref{lem:cdlemma} we
  know $\cd_{R}\Lambda \leq \dim(X) = d$. Since $\Lambda$ acts
  cocompactly with finite stabilizers on a horosphere $Y_n$ and
  $\tilde H_k(Y_n) = 0$ for $k < n-1$ by \cite[3.1]{bux}, Brown's
  criterion implies that $\Lambda$ is type $FP_{d-1}$ (see for example
  \cite[1.1]{brownfiniteness}).

  The complexes $X_n$ form an exhaustion of $B_0$ by closed,
  $\Gamma$-invariant sets such that $\Gamma \backslash X_n$ is
  compact. Therefore by the results of \S\ref{sec:trans_to_topology}
  there is an isomorphism $H^*(\Lambda; R\Lambda) \cong
  \hcu{k}(B_0)$ and for each $k$ there is an
  exact sequence
  \begin{equation}
    0\to \varprojlim{}^1 H_c^{k-1}(X_n) \to \hcu{k}(B_0) \to \varprojlim
    H_c^{k}(X_n) \to 0
  \end{equation}
  Note that the flow $\phi_t$ provides a
  proper deformation retraction of $X_n$ to $Y_n$ so
  $H_c^*(X_n)\cong H_c^*(Y_n)$. We know $H_c^*(Y_n)$ is concentrated
  in dimension $d-1$ by Proposition \ref{prop:horosphere-cohom}. Now
  Lemma \ref{horosphere_lim0} says $\varprojlim H_c^{d-1}(Y_n) = 0$ so
  $\hcu{*}(B_0)$ is concentrated in dimension $d$. In that dimension
  there is an isomorphism
  $\hcu{d}(B_0) \cong \varprojlim{}^1 H_c^{d-1}(Y_n)$, which is
  torsionfree by Lemma \ref{horosphere_lim1tf} and hence flat as an
  $R$-module.
\end{proof}

Suppose $\mathcal{O}_S$ is the ring of $S$-integers in a global
function field $K$ of characteristic $p$. Let $\Btwo$ be the group of
upper triangular matrices of determinant 1.

\begin{theorem}
  Suppose $\Gamma$ is a finite index subgroup of $\Btwo(\mco_S)$ such
  that the order of every finite order element is a power of $p$. Then
  $\Gamma$ is a $\Z[1/p]$-semiduality group of dimension $\abs{S}$.
\end{theorem}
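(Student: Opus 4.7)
The plan is to apply Lemma \ref{l:horoball-semiduality} to $\Gamma$ acting on $X_S = \prod_{v \in S} T_v$, the product of Bruhat-Tits trees for $\SLtwo(K_v)$, with $R = \Z[1/p]$ and Busemann function $\beta$ corresponding to the end of $X_S$ fixed by the Borel subgroup ${\bf P} = \Btwo$, as constructed in Section \ref{s:red}. Each $T_v$ is locally finite with no vertices of valence $1$, so $X_S$ fits the setup of Section \ref{sec:horosphere}.

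First I would verify the basic hypotheses of Lemma \ref{l:horoball-semiduality}. The group $\Gamma$ acts cellularly on $X_S$ through the embedding into $\SLtwo(\mathcal{O}_S)$, which acts properly as a lattice in $\prod_v \SLtwo(K_v)$, so all cell stabilizers in $\Gamma$ are finite. By the hypothesis that every torsion element of $\Gamma$ has $p$-power order, each stabilizer is a finite $p$-group, hence of order invertible in $\Z[1/p]$. Next I would check that $\Btwo(\mathcal{O}_S)$ preserves $\beta$: the unipotent radical $U(\mathcal{O}_S)$ preserves $\beta$ since each element of $U(K_v)$ lies in the horocyclic subgroup of the end defining $\beta_v$, while the diagonal torus $A(\mathcal{O}_S) \cong \mathcal{O}_S^*$ shifts $\beta$ by $\log_p |\alpha_0(a)| = 2 \sum_{v \in S} \log_p |a|_v$, which vanishes by the product formula $\prod_v |a|_v = 1$. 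Hence $\Gamma$ preserves $\beta$.

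The crux of the proof is verifying cocompact action of $\Gamma$ on the slabs $\beta^{-1}(I)$ for closed bounded intervals $I$. Since $\beta$ takes integer values on vertices of $X_S$, a bounded slab is a finite union of horospheres $Y_r = \beta^{-1}(r)$ together with cells bridging consecutive levels, so it suffices to establish cocompact action on each horosphere. For the boundary horosphere of a deep horoball, reduction theory (Proposition \ref{p:prune2}) gives that $\SLtwo(\mathcal{O}_S)$ acts cocompactly on $X_{S, r_1}$ for $r_1 \gg 0$, and Lemma \ref{l:horoball-action} identifies $\Btwo(\mathcal{O}_S)$ as the stabilizer in $\SLtwo(\mathcal{O}_S)$ of the ${\bf P}$-horoball, forcing cocompact action on $Y_{{\bf P}, S, r_1}$. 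For horospheres at other heights, one exploits the lattice structure: $\mathcal{O}_S$ is a cocompact lattice in $\prod_{v \in S} K_v$ and $\mathcal{O}_S^*$ is a cocompact lattice in the norm-$1$ subgroup of $\prod_v K_v^*$, so $\Btwo(\mathcal{O}_S)$ sits as a cocompact subgroup of the $\beta$-stabilizer in $\prod_v \Btwo(K_v)$, which in turn acts transitively on the vertices of each $Y_r$.

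The main obstacle will be this cocompactness verification at arbitrary heights, since reduction theory handles only the boundary horosphere directly; extension to all horospheres requires combining reduction theory with the $S$-unit theorem and the discreteness and cocompactness of $\mathcal{O}_S$ in $\prod_{v \in S} K_v$. Once cocompactness on bounded slabs is in hand, Lemma \ref{l:horoball-semiduality} applies with $d = |S|$, concluding that $\Gamma$ is a $\Z[1/p]$-semiduality group of dimension $|S|$.
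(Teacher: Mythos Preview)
Your approach is essentially the paper's: apply Lemma~\ref{l:horoball-semiduality} with $\Gamma$ acting on $X_S$, and use reduction theory together with the identification of $\Btwo(\mco_S)$ as the full $\SLtwo(\mco_S)$-stabilizer of the $\mathbf{P}$-horoball (via Proposition~\ref{p:prune2}(ii)) to deduce cocompactness on the bounding horosphere from cocompactness of $\SLtwo(\mco_S)$ on $X_{S,r}$.

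The one place you diverge is in what you flag as the ``main obstacle,'' cocompactness at arbitrary heights. The paper does not invoke the $S$-unit theorem or the explicit lattice structure of $\mco_S$ in $\prod_v K_v$. The reduction-theoretic argument already yields cocompactness of $\Btwo(\mco_S)$ on $Y_{\mathbf{P},S,n}$ for \emph{all} sufficiently large $n$, and the exhaustion in Lemma~\ref{l:horoball-semiduality} can be started at any level, so cocompactness on slabs at small heights is never actually needed. (Equivalently, the flow $\phi_t$ toward the end $\mathbf{P}$ is $\Gamma$-equivariant and proper, so cocompactness on one horosphere propagates to all.) Your proposed detour through $S$-units is valid but bypasses this shortcut.
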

\begin{proof}
  In the notation of \S\ref{s:red}, we may choose
  $\mathbf{P} = \Btwo$. Then applying Lemma \ref{l:horoball-action}
  with $\gamma = 1$ and $f=1$, we see that $\mathbf{P}$ acts on the
  horoball $B_{\mathbf{P},S,n}$ for all sufficiently large $n\in
  \N$. In fact $\mathbf{P}(\mco_S)$ is the entire stabilizer of
  $B_{\mathbf{P},S,n}$ in $\SLtwo(\mco_S)$ since if
  $\gamma \in \SLtwo(\mco_S)$ and $\gamma B_n = B_n$ then
  $B_{\mathbf{P}, S, n} = B_{\gamma \mathbf{P} \gamma^{-1}, S, n}$,
  which by Proposition \ref{p:prune2}(ii) means
  $\mathbf{P} = \gamma \mathbf{P} \gamma^{-1}$ and so
  $\gamma \in \mathbf{P}$. It follows that the action of $\mathbf{P}$
  on $Y_{\mathbf{P},S,n}$ is proper and cocompact since the action of
  $\SLtwo(\mco_S)$ is proper and cocompact on $X_S$. In particular,
  cell stabilizers are finite.

  $\Gamma$ acts on the product of trees $X_S$. Let $\beta$ be the
  Busemann function associated to the end $\mathbf{P}$. By the
  previous paragraph $\Gamma$ acts cocompactly on $\beta^{-1}(I)$ for
  any compact interval $I\subset \R$ because it has finite index in
  $\mathbf{P}(\mco_S)$. Since $\Gamma$ has only $p$-torsion and its
  action is proper, Lemma \ref{l:horoball-semiduality} applies.
\end{proof}

Suppose $F$ is a finite group. The {\em lamplighter group with base
  group $F$} is
$\Gamma_F = F\wr \Z = \left( \oplus_{i\in \Z} F \right) \rtimes \Z$,
where $\Z$ acts by shifting the indices of a sequence $(f_i)$. 

\begin{theorem} \label{thm:lamplighter} The lamplighter group with
  base group $F$ is a $\Z[1/\abs{F}]$-semiduality group of dimension
  2.
\end{theorem}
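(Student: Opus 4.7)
The plan is to apply Lemma~\ref{l:horoball-semiduality} with $d = 2$, $R = \Z[1/q]$ where $q = \abs{F}$, and $X = T \times T$ where $T$ is the $(q+1)$-regular tree. First I would fix ends $\xi_1, \xi_2$ of the two factors with integer-valued Busemann functions $h_1, h_2$ and set $\beta = h_1 + h_2$. I then describe a cellular action of $\Gamma_F = F \wr \Z$ on $X$ extending the classical simply transitive action of $\Gamma_F$ on the Diestel-Leader graph $\mathrm{DL}(q, q) = \beta^{-1}(0)$: the base group $B = \bigoplus_{\Z} F$ acts on the first factor horocyclically, with each $F_i$ acting via the regular representation of $F$ on the $q$ children of every vertex at level $i$, and on the second factor in a mirrored fashion, with $F_i$ instead acting at level $1 - i$; the generator $t$ of $\Z$ translates the two factors hyperbolically, shifting $h_1$ by $+1$ and $h_2$ by $-1$, so $\Gamma_F$ preserves $\beta$.

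Next, I would compute cell stabilizers. A direct verification shows the $B$-stabilizer of a vertex $(v_1, v_2)$ at levels $(n_1, n_2)$ equals $\bigoplus_{1 - n_2 \le i \le n_1} F_i$, which is trivial when $n_1 + n_2 \le 0$ and otherwise has order $q^{n_1 + n_2}$. Because $t$ strictly shifts $h_1$, only $t^0 \in \Z$ can stabilize any vertex, so the $\Gamma_F$-stabilizer of a vertex equals this $B$-stabilizer. Cell stabilizers of edges and squares are subgroups of vertex stabilizers, hence also finite $q$-groups whose orders are invertible in $\Z[1/q]$.

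I would then verify that $\Gamma_F$ acts cocompactly on $\beta^{-1}(I)$ for every compact interval $I$. Inside a horocycle product $H_{n_1}(T_1) \times H_{n_2}(T_2)$ with $n_1 + n_2 = c$, the $B$-action factors through the ``free'' coordinates $(b_i)_{i > n_1}$ (determining $bv_1$) and $(b_i)_{i < 1 - n_2}$ (determining $bv_2$); these index ranges overlap in exactly $\max(-c, 0)$ positions, so there are $q^{\max(-c, 0)}$ $B$-orbits on $H_{n_1}(T_1) \times H_{n_2}(T_2)$. The $t$-action bijects $B$-orbits between level pairs summing to $c$, leaving finitely many $\Gamma_F$-orbits on $\beta^{-1}(c) \cap X^{(0)}$, with an analogous finiteness for edges and $2$-cells. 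Since a compact $I$ meets only finitely many integer values of $\beta$, $\Gamma_F$ has finitely many cell orbits on $\beta^{-1}(I)$, proving cocompactness.

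The main technical obstacle, in my view, is this orbit-count analysis---in particular the careful verification that $t$-translation bijects $B$-orbits at different level pairs in a consistent way, without merging or splitting them. Once the three hypotheses of Lemma~\ref{l:horoball-semiduality} are in hand, its conclusion yields that $\Gamma_F$ is a $\Z[1/q]$-semiduality group of dimension $2$.
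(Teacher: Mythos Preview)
Your approach is exactly the paper's: verify the hypotheses of Lemma~\ref{l:horoball-semiduality} for the natural $\Gamma_F$-action on a product of two $(q+1)$-regular trees with $\beta = h_1 + h_2$, and conclude. The only difference is cosmetic---the paper outsources the description of the action, the finiteness of cell stabilizers (as finite sums of copies of $F$), and the cocompactness on $\beta^{-1}(I)$ to \cite[\S4]{wortmanfp}, whereas you unpack these details by hand; your stabilizer and orbit-count computations are consistent with that reference.
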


\begin{proof}
  Let $T_1$ and $T_2$ be copies of a $(\abs{F}+1)$-regular tree. The
  lamplighter group $\Gamma_F$ acts on $T_1\times T_2$ in a natural way;
  for description of the action see \cite[\S4]{wortmanfp}. This action
  preserves a Busemann function $\beta$ and is cocompact on any set of the
  form $\beta^{-1}(I)$ for closed intervals $I\subseteq \R$. Stabilizers
  of cells are finite sums of copies of $F$. Therefore Lemma
  \ref{l:horoball-semiduality} applies.
\end{proof}

There are ``higher rank'' generalizations of lamplighter groups known
as {\em Diestel-Leader groups} $\Gamma_d(q)$ which act on a product of
$d$ regular trees of valence $q+1$. These are constructed in
\cite{MR2421161} for any values of $d$ and $q$ such that $d\leq p+1$
for any prime $p$ dividing $q$; a lamplighter group with base group
$F$ is an example of $\Gamma_2(\abs{F})$. The proof of Theorem
\ref{thm:lamplighter} easily generalizes to prove:

\begin{theorem}
  A Diestel-Leader group $\Gamma_d(q)$ is a $\Z[1/q]$-semiduality
  group of dimension $d$.
\end{theorem}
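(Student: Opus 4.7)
The plan is to mimic the proof of Theorem~\ref{thm:lamplighter} essentially verbatim, using Lemma~\ref{l:horoball-semiduality} with $R = \Z[1/q]$ and $d$ equal to the number of tree factors. First I would recall from \cite{MR2421161} the construction of $\Gamma_d(q)$: it acts cellularly on a product $X = \prod_{i=1}^d T_i$, where each $T_i$ is a regular tree of valence $q+1$, by an action that preserves a distinguished Busemann function $\beta: X \to \R$ of the type described in Section~\ref{sec:horosphere}. Specifically, $\beta$ is the Busemann function for a ray that moves ``upward'' in each factor at prescribed speeds, and the condition $d \leq p+1$ for primes $p \mid q$ is precisely what ensures the existence of the action.

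Next I would verify the three hypotheses of Lemma~\ref{l:horoball-semiduality}. The action is cellular by construction. Cell stabilizers are finite subgroups of $\Gamma_d(q)$ whose orders are powers of primes dividing $q$ (this can be read off from the explicit description of vertex stabilizers in \cite{MR2421161} as finite products of copies of cyclic groups whose orders divide $q$), so their orders are invertible in $R = \Z[1/q]$. Cocompactness of the action on $\beta^{-1}(I)$ for closed intervals $I \subseteq \R$ follows from the fact that $\Gamma_d(q)$ acts transitively on each horosphere level $Y_n = \beta^{-1}(n)$ up to the stabilizer of a vertex, together with the cocompact action of the stabilizer of a horosphere on the slab between two adjacent horospheres, exactly as in the rank~$2$ lamplighter case.

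With these hypotheses checked, Lemma~\ref{l:horoball-semiduality} directly yields that $\Gamma_d(q)$ is a $\Z[1/q]$-semiduality group of dimension $d$, completing the proof. The only mild obstacle is locating the precise reference in \cite{MR2421161} for the cocompactness of the action on horosphere slabs and for the structure of cell stabilizers; both are standard facts about Diestel--Leader groups, and in particular the horosphere $Y_n$ is a single $\Gamma_d(q)$-orbit of a Cayley-2-complex-like structure, so cocompactness is immediate once the action is set up correctly. No further work beyond invoking Lemma~\ref{l:horoball-semiduality} is required.
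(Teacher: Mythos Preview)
Your proposal is correct and follows exactly the same approach as the paper, which simply states that the proof of Theorem~\ref{thm:lamplighter} generalizes. Your verification of the hypotheses of Lemma~\ref{l:horoball-semiduality} is, if anything, more detailed than what the paper provides.
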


As a final remark, consider a countable collection of finite groups
$\{ F_i \}_{i\in \N}$ and let $\Lambda = \oplus_{i\in \N} F_i$. (This
is not necessarily solvable.) Then $\Lambda$ is an $R$-semiduality
group of dimension 1 for any principal ideal domain $R$ in which
$\abs{F_i}$ is invertible for every $i$. (So, for example, any
countable sum of finite groups is a $\Q$-semiduality group.) To see
this, let $\Lambda_n = \oplus_{i=0}^n F_i$. Form a graph of groups
with underlying graph a simplicial ray whose $n$th vertex and
proceeding edge are labeled by $\Lambda_n$, with inclusion maps from
edge groups to incident vertex groups. Then $\Lambda$ is the
fundamental group of this graph of groups. It acts on the Bass-Serre
tree preserving a height function inherited from the base ray, and is
cocompact on preimages of closed intervals. Cell stabilizers are
isomorphic to some $\Lambda_n$, so Lemma \ref{l:horoball-semiduality}
produces the desired result.


\end{document}